\newcommand{\C}{\mathbb{C}}
\newcommand{\R}{\mathbb{R}}
\newcommand{\Q}{\mathbb{Q}}
\newtheorem{thm}{Theorem}
\newtheorem*{thm*}{Theorem}
\newtheorem{prop}[thm]{Proposition}
\newtheorem{rem}[thm]{Remark}
\newcommand{\dd}{\;\mathrm{d}}
\newcommand{\m}{\mathrm{m}}
\newcommand{\M}{\mathrm{M}}
\newcommand{\ol}[1]{\overline{#1}}
\newcommand{\Li}{\mathrm{Li}} 
\newcommand{\I}{\mathrm{I}} 
\DeclarePairedDelimiter\floor{\lfloor}{\rfloor}
\renewcommand{\pmod}[1]{{\ifmmode\text{\rm\ (mod~$#1$)}\else\discretionary{}{}{\hbox{ }}\rm(mod~$#1$)\fi}}
\definecolor{LightGreen}{rgb}{0.75,0.95,0.65}
\definecolor{LightCyan}{rgb}{0.88,1,1}
\definecolor{LightGray}{gray}{0.7}
\newcolumntype{a}{>{\columncolor{LightCyan}}c}
\newcolumntype{b}{>{\columncolor{LightGreen}}c}
\newcolumntype{d}{>{\columncolor{LightGray}}c}
\newcolumntype{C}{>{$}c<{$}}
\newcolumntype{L}{>{$}l<{$}}
\newcolumntype{R}{>{$}r<{$}}
\begin{document}

\title{The Mahler measure of a family of polynomials with arbitrarily many variables}
\begin{abstract}
We present an exact formula for the Mahler measure of an infinite family of polynomials with arbitrarily many variables. The formula is obtained by manipulating the integral defining the Mahler measure using certain transformations, followed by an iterative process that reduces this computation to the evaluation of certain polylogarithm functions at sixth roots of unity. This yields values of the Riemann zeta function and the Dirichlet $L$-function associated to the character of conductor 3.
\end{abstract}

\subjclass[2020]{Primary 11R06; Secondary 11M06, 11G55}
\keywords{Mahler measure; zeta values; $L$-values; polylogarithms}
\author{Siva Sankar Nair}
\address{Siva Sankar Nair:  D\'epartement de math\'ematiques et de statistique,
                                    Universit\'e de Montr\'eal,
                                    CP 6128, succ. Centre-ville,
                                     Montreal, QC H3C 3J7, Canada}\email{siva.sankar.nair@umontreal.ca}
\maketitle

\section{Introduction}

For a non-zero rational function in $n$ variables, $P \in \C(x_1,\dots,x_n)$,  the (logarithmic) Mahler measure of $P$ is defined to be 
\begin{equation*}
 \m(P):=\frac{1}{(2\pi i)^n}\int_{\mathbb{T}^n}\log|P(x_1,\dots, x_n)|\frac{\mathrm{d}  x_1}{x_1}\cdots \frac{\mathrm{d}  x_n}{x_n},
\end{equation*}
where the integration path is taken along the unit $n$-torus $\mathbb{T}^n=\{(x_1,\dots,x_n)\in \mathbb{C}^n : |x_1|=\cdots=|x_n|=1\}$ with respect to the Haar measure. 

While Jensen had already studied this integral for single-variable holomorphic functions in the late $19^\text{th}$ century, it was in Lehmer's work \cite{Le} that this quantity first appeared in the context of single-variable polynomial functions.  One may note that using Jensen's formula, the Mahler measure of a univariate polynomial can be expressed in terms of the absolute value of its roots that lie outside the unit circle.  The generalization to multivariate polynomials was made by Mahler \cite{Mah} when he studied the quantity $\M(P)=\exp(\m(P))$ in his work related to polynomial heights.  In the 1980s, it was Smyth \cite{B1,S1} who first observed a relation between Mahler measures and values of certain $L$-functions when he showed the following equalities:
\begin{equation}\label{eq:level2}
\m(x+y+1) = \frac{3 \sqrt{3}}{4 \pi} L(\chi_{-3},2)=L'(\chi_{-3},-1),
\end{equation} 
\begin{equation}\label{eq:level3}
\m(x+y+z+1)=\frac{7}{2\pi^2} \zeta(3)=-14\cdot\zeta'(-2),
\end{equation}
where  $L(\chi_{-3},s)$ is the Dirichlet $L$-function in the character of conductor 3 and $\zeta(s)$ is the Riemann zeta function. 

Interest in multivariate Mahler measures grew following these results, and several relations between Mahler measures and special values of $L$-functions associated to various arithmetic objects were observed.  For instance,  Deninger  \cite{Deninger} and Boyd \cite{Bo98}  conjectured the following relation involving an elliptic curve, which was later proven by Rogers and Zudilin \cite{RZ14}:
\[\m\left(x+\frac{1}{x}+y+\frac{1}{y}+1\right)=\frac{15}{4\pi^2}L(E_{15a8},2)=L'(E_{15a8},0).\]
Here $L(E_{15a8},s)$ denotes the $L$-function associated to the elliptic curve defined by the Weierstrass equation
\[
E_{15a8}:y^2+xy+y=x^3+x^2,
\]
with Cremona label $15a8$.  While much remains unknown regarding this mysterious link between Mahler measures and $L$-functions, there has been remarkable work that sheds light on this phenomenon.  In the 1990s, Deninger \cite{Deninger} related Mahler measures to certain regulator values from $K$-theory. In view of Beilinson's conjectures, these regulators are expected to appear as values of $L$-functions, as well as polylogarithm functions evaluated at algebraic numbers. In turn, certain combinations of these polylogarithms yield values of the Riemann zeta function and various other Dirichlet $L$-functions, explaining their presence in  Mahler measure computations.  For example, Smyth's first example \eqref{eq:level2} is the Mahler measure of a two-variable polynomial expressed in terms of a dilogarithm, and his second example \eqref{eq:level3} is that of a three-variable polynomial in terms of a trilogarithm. On a slightly different but related note, one may view this in context of Zagier's grand conjecture \cite{Zagier1991} -- the value $\zeta_K(n)$ of the Dedekind zeta function corresponding to a number field $K$ evaluated at an integer $n>1$ can be expressed as the determinant of a matrix whose entries are linear combinations of polylogarithms evaluated at certain elements of $K$.  Once again, this has connections to the (Borel) regulator, as seen in the proofs for the case $n=3$ by Goncharov \cite{Goncharov} and $n=4$ by Goncharov and Rudenko \cite{goncharov2022motivic}.

A particularly interesting result was given by Lalín \cite{nvar} involving rational functions with arbitrarily many variables whose Mahler measure can be computed explicitly.  These formulae arise by evaluating polylogarithms at the fourth root of unity which yield values of the Dirichlet $L$-function $L(\chi_{-4},s)$ associated to the character of conductor 4.  This is one of the few results in the literature where the exact Mahler measure of multivariable polynomials with arbitrarily many variables has been calculated (see \cite{LNR} and \cite{smyth2003explicit} for other examples). We explain the result in greater detail. Let
\begin{align*}
R_m(z_1,\dots, z_m,y):=y+\left(\frac{1-z_1}{1+z_1}\right)\cdots\left(\frac{1-z_m}{1+z_m}\right),
\end{align*}
and for $a_1, \dots a_m \in \C$, define the elementary symmetric polynomials
\[s_\ell(a_1,\dots, a_m)=\left\{
\begin{array}{ll}
1& \mbox{if }\ell=0,\\
\sum_{i_1<\cdots<i_\ell}a_{i_1}\cdots a_{i_\ell}& \mbox{if }0<\ell\leq m,\\
0& \mbox{if }\ell>m.
\end{array}
\right.\]
We have an explicit formula for the Mahler measure of the rational function $R_m$:
\begin{thm*}\label{thm:nvar} (\cite{nvar,LL}) For $n \geq 1$,
 \[\m\left(R_{2n} \right) =\sum_{h=1}^n  \frac{s_{n-h}(2^2, 4^2, \dots, (2n-2)^2)}{(2n-1)!} \left(\frac{2}{\pi}\right)^{2h}\mathcal{A}(h)
,\]
where
\[\mathcal{A}(h):=(2h)!\left(1-\frac{1}{2^{2h+1}}\right) \zeta(2h+1).\]

For $n \geq 0$,
\[ \m\left( R_{2n+1}\right)= \sum_{h=0}^n \frac{s_{n-h}(1^2,3^2, \dots, (2n-1)^2)}{(2n)!}  \left(\frac{2}{\pi}\right)^{2h+1} \mathcal{B}(h),\]
where
\[\mathcal{B}(h):=(2h+1)!L(\chi_{-4}, 2h+2).\]
\end{thm*}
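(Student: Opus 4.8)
The plan is to collapse the $(m+1)$-fold torus integral to a single one-dimensional integral and then read off the arithmetic from the Mellin transform of a power of $\mathrm{sech}$. First I would integrate out $y$: writing $R_m=y+Q$ with $Q=\prod_{j=1}^m\frac{1-z_j}{1+z_j}$ independent of $y$, Jensen's formula gives $\frac{1}{2\pi}\int_0^{2\pi}\log|e^{i\phi}+Q|\dd\phi=\log^+|Q|$, so $\m(R_m)=\frac{1}{(2\pi)^m}\int_{\TT^m}\log^+|Q|$. On the unit circle one has $\frac{1-e^{i\theta}}{1+e^{i\theta}}=-i\tan(\theta/2)$, hence $|Q|=\prod_j|\tan(\theta_j/2)|$; the substitutions $t_j=\tan(\theta_j/2)$ and then $t_j=e^{u_j}$ turn this into $\m(R_m)=\frac{2^m}{\pi^m}\int_{\R^m}\big(\sum_j u_j\big)^{+}\prod_j\frac{\dd u_j}{2\cosh u_j}$, where $(\cdot)^{+}=\max(\cdot,0)$. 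Using the symmetry $u\mapsto-u$ to replace $\big(\sum u_j\big)^{+}$ by $\tfrac12\big|\sum u_j\big|$, the identity $|a|=\frac{2}{\pi}\int_0^\infty\frac{1-\cos(at)}{t^2}\dd t$, and the Fourier transform $\int_{-\infty}^\infty\frac{\cos(ut)}{\cosh u}\dd u=\pi\,\mathrm{sech}\!\big(\tfrac{\pi t}{2}\big)$, the $m$ integrals factor and the whole expression collapses to
\[
\m(R_m)=\frac12\int_0^\infty\frac{1-\mathrm{sech}^m x}{x^2}\dd x.
\]

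I would then interpret the right-hand side through the Mellin transform $M_m(s)=\int_0^\infty x^{s-1}\mathrm{sech}^m x\dd x$. Since $\mathrm{sech}^m$ is even, $M_m$ continues meromorphically with poles only at $s=0,-2,-4,\dots$; in particular it is holomorphic at $s=-1$, where the subtracted representation valid on $-2<\re s<0$ gives $M_m(-1)=\int_0^\infty x^{-2}(\mathrm{sech}^m x-1)\dd x$, so that $\m(R_m)=-\tfrac12 M_m(-1)$. The decisive algebraic input is the operator identity $\big(k^2-\tfrac{d^2}{dx^2}\big)\mathrm{sech}^k x=k(k+1)\,\mathrm{sech}^{k+2}x$, which iterates to
\[
\mathrm{sech}^{2n+1}x=\frac{1}{(2n)!}\prod_{j=1}^{n}\Big((2j-1)^2-\tfrac{d^2}{dx^2}\Big)\mathrm{sech}\,x,\qquad \mathrm{sech}^{2n}x=\frac{1}{(2n-1)!}\prod_{j=1}^{n-1}\Big((2j)^2-\tfrac{d^2}{dx^2}\Big)\mathrm{sech}^2 x.
\]
Expanding these products in powers of $\tfrac{d^2}{dx^2}$ produces exactly the elementary symmetric polynomials $s_{n-h}$ of the squares together with the factorials $(2n)!$ and $(2n-1)!$ of the statement. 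Combined with the meromorphic relation $\mathcal{M}[f''](s)=(s-1)(s-2)\,\mathcal{M}[f](s-2)$, this rewrites $M_{2n+1}(-1)$ as a combination, with coefficients $s_{n-h}(1^2,\dots,(2n-1)^2)$, of the base values $M_1(-1-2h)$, and likewise $M_{2n}(-1)$ in terms of the $M_2(-1-2h)$; at $s=-1$ the falling factorial $(s-1)\cdots(s-2h)$ contributes exactly $(2h+1)!$.

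It then remains to evaluate the base transforms $M_1(s)=2\Gamma(s)\beta(s)$ and $M_2(s)=2^{2-s}\Gamma(s)\eta(s-1)$, where $\beta=L(\chi_{-4},\cdot)$ is the Dirichlet beta function and $\eta(s)=(1-2^{1-s})\zeta(s)$, at the negative odd integers $s=-1-2h$. At each such point the simple pole of $\Gamma(s)$ is cancelled by the trivial zero of $\beta$ (respectively of $\zeta$ inside $\eta$), so the value is governed by $\beta'(-1-2h)$, respectively $\zeta'(-2-2h)$. The functional equations of $\beta$ and $\zeta$ convert these derivatives into $(-1)^h\big(\tfrac{2}{\pi}\big)^{2h+1}(2h+1)!\,L(\chi_{-4},2h+2)$ and into a multiple of $(1-2^{-(2h+1)})\zeta(2h+1)$ carrying the correct power of $\pi$; after the sign $(-1)^h$ cancels against the $(-1)^h$ from the operator expansion, these assemble precisely into $\mathcal{B}(h)$ and $\mathcal{A}(h)$. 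Multiplying by $-\tfrac12$ yields the two displayed formulas, and the cases $m=1,2$, which reproduce $\frac{2}{\pi}L(\chi_{-4},2)$ and $\frac{7}{\pi^2}\zeta(3)$, serve as a numerical check.

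I expect the main obstacle to be the passage to $s=-1$ rather than the combinatorics, which becomes automatic once the operator factorization is found. Concretely, I must justify the meromorphic continuation and the vanishing of all boundary terms in the repeated integration by parts underlying $\mathcal{M}[f''](s)=(s-1)(s-2)\mathcal{M}[f](s-2)$, confirm that the pole of $\Gamma$ is annihilated by a \emph{simple} zero (and not a higher-order one) at every $s=-1-2h$, and track the numerical constants through the functional equations so that the resulting limits reproduce \emph{exactly} the factors $\big(\tfrac{2}{\pi}\big)^{2h}$, $\mathcal{A}(h)$ and $\mathcal{B}(h)$. Finding the identity $\big(k^2-\tfrac{d^2}{dx^2}\big)\mathrm{sech}^k=k(k+1)\mathrm{sech}^{k+2}$ in the first place---the step that forces the elementary symmetric polynomials of $1^2,3^2,\dots$ and $2^2,4^2,\dots$ to appear---is the other genuinely non-routine ingredient.
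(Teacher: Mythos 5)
Your proposal is correct, but it takes a genuinely different route from the proof in \cite{nvar,LL} (the statement is quoted here from those references; the present paper reproves nothing for $R_m$, but its own Theorem \ref{thm:main} runs the same method on the family $Q_n$). The cited proof, like Section \ref{sec:compute} of this paper, keeps the multiple integral and removes one variable at a time: after the tangent substitution and the change of variables $y_k=x_1\cdots x_k$, each inner integral is of the type $\int_0^\infty t\log^k t/\big((t^2\pm t+1)(t^2\pm y t+y^2)\big)\,\dd t$, evaluated by keyhole-contour residues, with recursively defined polynomial coefficients propagated through the iteration, and the final one-variable integrals are converted into hyperlogarithms, i.e.\ polylogarithms at fourth roots of unity, whence the $\zeta$- and $L(\chi_{-4},\cdot)$-values. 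You instead collapse all $m$ integrations in one stroke, exploiting the evenness of the density $\tfrac{1}{2\cosh u}$ and the Fourier self-duality of $\mathrm{sech}$, to get $\m(R_m)=\tfrac12\int_0^\infty(1-\mathrm{sech}^m x)\,x^{-2}\,\dd x$; then the ladder identity $\big(k^2-\tfrac{d^2}{dx^2}\big)\mathrm{sech}^k x=k(k+1)\,\mathrm{sech}^{k+2}x$ forces the elementary symmetric polynomials, and the Mellin data $M_1(s)=2\Gamma(s)\beta(s)$, $M_2(s)=2^{2-s}\Gamma(s)\eta(s-1)$ together with the functional equations deliver exactly $\mathcal{A}(h)$ and $\mathcal{B}(h)$. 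I checked the details you flagged and they all go through: Tonelli justifies the collapse since $1-\cos(\cdot)\ge 0$; the trivial zeros of $\beta$ at negative odd integers and of $\zeta$ at negative even integers are simple, so they cancel the simple $\Gamma$-poles as claimed; the falling factorial at $s=-1$ is $(2h+1)!$; the two $(-1)^h$ signs cancel; and the cases $m=1,2$ give $\tfrac{2}{\pi}L(\chi_{-4},2)$ and $\tfrac{7}{\pi^2}\zeta(3)$. As for what each approach buys: yours is shorter and yields the closed-form coefficients $s_{n-h}$ with no recursive bookkeeping, but it depends on two special features of this family --- the self-dual $\mathrm{sech}$ kernel and the existence of the ladder identity for its powers --- which are absent for the twisted family $Q_n$ of this paper, where the two sign regions produce the distinct, non-self-dual densities $1/(1+2\cosh u)$ and $1/(2\cosh u-1)$; the residue-iteration machinery, by contrast, works variable by variable and survives that asymmetry, which is precisely why this paper uses it.
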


The purpose of this article is to express the Mahler measure of another family of polynomials with arbitrarily many variables in terms of zeta values and certain $L$-values.  The method of obtaining these results is similar to that of Lalín \cite{nvar} applied to a more complicated family of functions. However, the polynomials we consider have algebraic coefficients rather than integral. The interesting outcome is that in this case,  the calculations involve the polylogarithm evaluated at sixth roots of unity and we get values of the $L$-function $L(\chi_{-3},s)$ associated to the character of conductor 3. This is the first example of an infinite family of polynomials with arbitrarily many variables yielding Mahler measures that involve special values of the $L$-function $L(\chi_{-3},s)$.  In particular, we consider the rational function
\[
Q_n(z_1,\dots,z_n,y)=y+\left(\frac{z_1+\omega}{z_1+1}\right)\cdots\left(\frac{z_n+\omega}{z_n+1}\right)
\]
where 
\[
\omega=e^{2\pi i/3}=-\frac12+\frac{\sqrt3i}2
\]
is a third root of unity.  We let \begin{equation*}
\mathcal{C}(h)=(2h)!\left(1-\frac1{3^{2h+1}}\right)\left(1-\frac1{2^{2h+1}}\right)\zeta(2h+1),
\end{equation*}
and
\begin{equation*}
\mathcal{D}(h)=(2h+1)!\left(1+\frac1{2^{2h+2}}\right)L(\chi_{-3},2h+2).
\end{equation*}
Then 
\begin{thm}\label{thm:main} For $n\ge1$,
\begin{equation*}
\m(Q_{2n})=\frac{2}{12^n}\Bigg(\sum_{h=1}^n  a_{n,h-1}\left(\frac{3}{\pi}\right)^{2h}\mathcal{C}(h)\;+\;\sum_{h=0}^{n-1}b_{n,h}\left(\frac{3}{\pi}\right)^{2h+1}\mathcal{D}(h)\Bigg),
\end{equation*}
and for $n\ge0$ we have
\begin{equation*}
\m(Q_{2n+1})=\frac{1}{{12}^n}\Bigg(\frac{1}{\sqrt3}\sum_{h=1}^n  c_{n,h-1}\left(\frac{3}{\pi}\right)^{2h}\mathcal{C}(h)\;+\;\frac{1}{\sqrt3}\sum_{h=0}^{n}d_{n,h}\left(\frac{3}{\pi}\right)^{2h+1}\mathcal{D}(h)\Bigg),
\end{equation*}
where the coefficients $a_{r,s}\,,b_{r,s}\,,c_{r,s},d_{r,s}$ are real numbers given recursively by equations \eqref{eq:aij}-\eqref{eq:dij} starting from the initial value $d_{0,0}=1$. 
\end{thm}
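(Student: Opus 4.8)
The plan is to integrate out $y$ first and then exploit the fact that, on the unit circle, each factor $\frac{z_j+\omega}{z_j+1}$ becomes a real multiple of a fixed unimodular constant. Jensen's formula in $y$ gives $\m(Q_n)=\frac{1}{(2\pi i)^n}\int_{\TT^n}\log^+\bigl|\prod_{j=1}^n\frac{z_j+\omega}{z_j+1}\bigr|\,\prod_j\frac{dz_j}{z_j}$. Writing $z_j=e^{i\theta_j}$ and using $1+\omega=e^{i\pi/3}$, a short computation shows $\frac{z_j+\omega}{z_j+1}=t_j\,e^{i\pi/3}$ with $t_j=\frac{\cos(\theta_j/2-\pi/3)}{\cos(\theta_j/2)}\in\R$, so the phases drop out of the absolute value and (writing $\mathbb{E}$ for the average over the torus) $\m(Q_n)=\mathbb{E}\bigl[\log^+\prod_j|T_j|\bigr]$, where the $T_j$ are i.i.d.\ with the law obtained by pushing the Haar measure forward under $z\mapsto t$. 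The substitution $u=\tan(\theta/2)$ turns this law into the shifted, scaled Cauchy measure $d\mu(t)=\frac{\sqrt3}{2\pi}\frac{dt}{t^2-t+1}$, whose denominator has roots at the primitive sixth roots of unity $e^{\pm i\pi/3}$ --- the arithmetic source of the conductor-$3$ data.

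Next I would record two structural features. First, $d\mu$ is invariant under $t\mapsto 1/t$, so $V:=\log|T|$ is symmetric; hence $S_n:=\sum_j V_j$ is symmetric and $\m(Q_n)=\mathbb{E}[S_n^+]=\tfrac12\,\mathbb{E}|S_n|$. Second, the density of $V$ computes to $f_V(v)=\frac{\sqrt3}{2\pi}\bigl(\frac1{2\cosh v-1}+\frac1{2\cosh v+1}\bigr)$, whose Fourier transform is the remarkably clean $\widehat{f_V}(\xi)=\frac{\cosh(\pi\xi/6)}{\cosh(\pi\xi/2)}=\frac{1}{2\cosh(\pi\xi/3)-1}$. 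Using the elementary identity $|s|=\frac1\pi\int_{-\infty}^\infty\frac{1-\cos(\xi s)}{\xi^2}\,d\xi$ together with $\widehat{f_{S_n}}=\widehat{f_V}^{\,n}$, this collapses the whole computation to the single one-dimensional integral $\m(Q_n)=\frac1\pi\int_0^\infty\frac{1}{\xi^2}\Bigl(1-\bigl(2\cosh(\pi\xi/3)-1\bigr)^{-n}\Bigr)\,d\xi$.

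The decisive step is to evaluate this integral by residues, closing the contour in the upper half-plane. The integrand $\bigl(2\cosh(\pi\xi/3)-1\bigr)^{-n}$ has poles exactly where $\cosh(\pi\xi/3)=\cos(\pi/3)$, i.e.\ at $\xi=i\nu$ with $\nu$ running over the positive integers coprime to $6$, split into the families $\nu\equiv1$ and $\nu\equiv5\pmod 6$ on which $\chi_{-3}=+1$ and $-1$ respectively. Summing the residues over $\nu$ produces the Dirichlet series $\sum_{\gcd(\nu,6)=1}\nu^{-s}=(1-2^{-s})(1-3^{-s})\zeta(s)$ at odd $s=2h+1$ and $\sum_{\gcd(\nu,2)=1}\chi_{-3}(\nu)\nu^{-s}=(1+2^{-s})L(\chi_{-3},s)$ at even $s=2h+2$, which are precisely the packages $\mathcal{C}(h)$ and $\mathcal{D}(h)$; meanwhile the local behaviour $2\cosh(\pi\xi/3)-1\sim\pm i\frac{\pi}{\sqrt3}(\xi-i\nu)$ near each pole supplies the factors $(3/\pi)^{2h}$, $(3/\pi)^{2h+1}$, the powers of $\sqrt3$ and the $12^{-n}$, while the parity of $n$ selects between the two stated forms.

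The main obstacle, and the place where the iterative process enters, is the bookkeeping at the order-$n$ poles: the residue at $\xi=i\nu$ requires the Laurent expansion of $\bigl(2\cosh(\pi\xi/3)-1\bigr)^{-n}$ to order $n-1$, and the coefficients that multiply the successive polylogarithm weights are exactly the quantities $a_{r,s},b_{r,s},c_{r,s},d_{r,s}$. I expect these to satisfy a recursion in $n$ inherited from multiplying by one further factor $\bigl(2\cosh(\pi\xi/3)-1\bigr)^{-1}$ (equivalently, from the convolution $f_{S_n}=f_{S_{n-1}}*f_V$), which I would solve to obtain \eqref{eq:aij}--\eqref{eq:dij} with the initial datum $d_{0,0}=1$ fixed by the base case $\m(Q_1)$. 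A secondary technical point is justifying the Fourier manipulations --- the interchange of summation and integration, and convergence at $\xi=0$, where the subtraction of $1$ cancels the double pole of $\xi^{-2}$ --- which follow from the $O(\xi^2)$ vanishing of $1-\widehat{f_V}^{\,n}$ at the origin and the exponential decay of $\widehat{f_V}$.
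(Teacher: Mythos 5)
Your reduction is correct in its main steps and takes a genuinely different route from the paper. I checked the key claims: on the torus $\frac{z+\omega}{z+1}=t\,e^{i\pi/3}$ with $t=\frac{\cos(\theta/2-\pi/3)}{\cos(\theta/2)}$ real; the pushforward of Haar measure is indeed $\frac{\sqrt3}{2\pi}\frac{\dd t}{t^2-t+1}$; the law of $V=\log|T|$ is symmetric and its characteristic function is $\mathbb{E}\big[|T|^{i\xi}\big]=\frac{\cosh(\pi\xi/6)}{\cosh(\pi\xi/2)}=\frac1{2\cosh(\pi\xi/3)-1}$ (computable from $\int_0^\infty \frac{t^{i\xi}}{t^2\pm t+1}\,\dd t$ and the identity $\cosh 3u=\cosh u\,(4\cosh^2u-3)$); and Tonelli justifies $\mathbb{E}|S_n|=\frac1\pi\int_{-\infty}^{\infty}\xi^{-2}\big(1-(2\cosh(\pi\xi/3)-1)^{-n}\big)\dd\xi$. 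So your one-line formula for $\m(Q_n)$ is a correct and strikingly compact reformulation: it collapses the paper's entire apparatus (the four integral families $f_1,f_2,g_1,g_2$ of Section \ref{sec:generalintegrals} and the variable-by-variable iteration of Section \ref{sec:compute}) into a single contour integral, with the conductor-$3$ arithmetic entering through the pole locations $\xi=i\nu$, $\gcd(\nu,6)=1$, rather than through polylogarithms at sixth roots of unity. The residue analysis is sound: the local factor $2\cosh(\pi\xi/3)-1\sim\pm i\frac{\pi}{\sqrt3}(\xi-i\nu)$ is right, a parity check on the Laurent coefficients shows the $\nu\equiv1$ and $\nu\equiv5\pmod6$ families combine with real coefficients into $(1-2^{-s})(1-3^{-s})\zeta(s)$ at odd $s$ and $(1+2^{-s})L(\chi_{-3},s)$ at even $s$, i.e.\ precisely $\mathcal{C}(h)$ and $\mathcal{D}(h)$, and the powers of $\pi$ and $3$ come out as stated; for $n=1$ your integral evaluates to $\frac{5\sqrt3}{4\pi}L(\chi_{-3},2)$, matching the paper. (One technical point you gloss over: the poles march up the imaginary axis, so the contour must be closed along semicircles of radius $R\equiv3\pmod 6$, using the $6i$-periodicity of $2\cosh(\pi\xi/3)-1$ to bound the integrand by $O(R^{-2})$ away from unit disks around the poles.)

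The genuine gap is the step you explicitly defer. The theorem does not just assert a formula of this shape with \emph{some} real coefficients; it asserts that the coefficients are the specific numbers generated by the recursions \eqref{eq:aij}--\eqref{eq:dij}, which are built from the coefficients of the paper's polynomial families $R_k,S_k,P_k,Q_k,Y_k,Z_k$, with seed $d_{0,0}=1$. Your method produces the coefficients as explicit combinations of Laurent coefficients of $(2\cosh(\pi\xi/3)-1)^{-n}$ at its order-$n$ poles, and you only ``expect'' these to satisfy the paper's recursion. This cannot be discharged by comparing the two evaluations of $\m(Q_n)$: the numbers $\zeta(2h+1)/\pi^{2h}$ and $L(\chi_{-3},2h+2)/\pi^{2h+1}$ are not known to be linearly independent over $\Q$, so equality of the sums does not imply equality of the coefficient systems. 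To prove the statement as written you must show directly that the recursion your coefficients inherit from multiplying by one more factor of $(2\cosh(\pi\xi/3)-1)^{-1}$ coincides with \eqref{eq:aij}--\eqref{eq:dij}, including the alternation between the $(a,b)$ and $(c,d)$ systems and the distribution of the $\sqrt3$'s -- this is real work, the analogue of Section \ref{sec:reccoeffs}. As it stands, your argument proves a theorem of the same form with an alternative, generating-function description of the coefficients (close in spirit to the paper's own alternate method in Section \ref{sec:alternat}), but not literally the quoted statement.
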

Using the respective functional equations, we may also express these Mahler measures as $\Q$-linear combinations of derivatives of the Riemann zeta function evaluated at negative even integers and derivatives of the $L$-function $L(\chi_{-3},s)$ evaluated at negative odd integers (see equations \eqref{eq:dereven} and \eqref{eq:derodd} in Section \ref{subsec:derivatives}). The first few examples of this family are given in Table \ref{table:arbitrary}.

We remark that using an alternate method, we can write the coefficients $a_{r,s}\,,b_{r,s}\,,c_{r,s},d_{r,s}$ in a more explicit form. This method, described in Section \ref{sec:alternat}, uses polynomials that arise as coefficients in certain power series, and whose explicit expressions may be obtained using standard convolution techniques. The first two polynomials in this family have been computed (equations \eqref{eq:am_expl} and \eqref{eq:bm_expl}).
\renewcommand{\arraystretch}{3.5}
\renewcommand{\tabcolsep}{0.5cm}
\begin{table}
\caption{First few examples of the family $Q_n$}\label{table:arbitrary}
\begin{equation*}
\begin{array}{|c|l|}
\hline
\m\left(y+\left(\frac{ z_1+\omega}{z_1+1}\right)\right)&\frac{5\sqrt3}{4 \pi}\,L(\chi_{-3},2)=\frac{5}{3}\,L'(\chi_{-3},-1)\\
\hline
\m\left(y+\left(\frac{ z_1+\omega}{z_1+1}\right)\left(\frac{ z_2+\omega}{z_2+1}\right)\right)&\begin{multlined}\textstyle\frac{91}{18\pi^2}\zeta(3)+\frac{5}{4\sqrt3 \pi}\,L(\chi_{-3},2)\\
\textstyle=-\frac{182}9\zeta'(-2)+\frac59L'(\chi_{-3},-1)\end{multlined}\\[4mm]
\hline
\m\left(y+\left(\frac{ z_1+\omega}{z_1+1}\right)\cdots\left(\frac{ z_3+\omega}{z_3+1}\right)\right)&\begin{multlined}\textstyle\frac{91}{36\pi^2}\zeta(3)+\frac{5}{4\sqrt3 \pi}\,L(\chi_{-3},2)+\frac{153\sqrt3}{16\pi^3}L(\chi_{-3},4)\\
\textstyle=-\frac{91}9\zeta'(-2)+\frac59L'(\chi_{-3},-1)-\frac{17}{18}L'(\chi_{-3},-3)\end{multlined}\\[4mm]
\hline
\m\left(y+\left(\frac{ z_1+\omega}{z_1+1}\right)\cdots\left(\frac{ z_4+\omega}{z_4+1}\right)\right)&\begin{multlined}\textstyle\frac{91}{36\pi^2}\zeta(3)+\frac{3751}{108\pi^4}\zeta(5)+\frac{35}{36\sqrt3 \pi}\,L(\chi_{-3},2)+\frac{51\sqrt3}{8\pi^3}L(\chi_{-3},4)\\
\textstyle=-\frac{91}9\zeta'(-2)+\frac{3751}{81}\zeta'(-4)+\frac{35}{81}L'(\chi_{-3},-1)-\frac{17}{27}L'(\chi_{-3},-3).\end{multlined}\\[4mm]
\hline
\end{array}
\end{equation*}
\end{table}

To evaluate the above Mahler measures, we first use contour integration to derive general formulae for certain integrals of the form 
\[
\int_0^\infty \frac{t\log^kt}{(t^2+at+a^2)(t^2+bt+b^2)}\dd t\quad\text{and}\quad\int_0^\infty\frac{t(t+a)\log^k t}{(t^3-a^3)(t^2+bt+b^2)}\dd t,
\]
for $a,b\in\R$. This is done in Section \ref{sec:generalintegrals}. We also present an alternate method to evaluate these integrals, which results in more explicit expressions rather than recursive ones. This is done in Section \ref{sec:alternat}, and summarized in Proposition \ref{prop:summary}. The multiple integral appearing in the Mahler measure computation can be evaluated using the formulae from Section \ref{sec:generalintegrals} one after the other for each variable.  To systematically capture the details of this process,  we lay out an iterative procedure in Section \ref{sec:compute}. This lets us express the Mahler measure as an integral of a univariate function, and these integrals in turn can be related to values of certain polylogarithms at sixth roots of unity via hyperlogarithms, which we introduce in Section \ref{sec:hyperlogs}. The polylogarithm values can then be expressed in terms of the Riemann zeta function and Dirichlet $L$-functions, giving us the desired result. 

\section*{Acknowledgements}
The author would like to thank his Ph.D. supervisor, Matilde Lalín, for her valuable guidance and support, both throughout the course of this project and beyond. The author also thanks the reviewer for several helpful comments that have greatly improved the readability and quality of the article. The author is also grateful for support from the Institut des sciences mathématiques, the Centre de recherches mathématiques, the Natural Sciences and Engineering Research Council of Canada, and the Fonds de recherche du Québec -- Nature et technologies.

\section{Hyperlogarithms and multiple polylogarithms}\label{sec:hyperlogs}
We first give a brief introduction to hyperlogarithms and multiple polylogarithms and relate them via iterated integrals. One may refer to \cite[Section 12]{Gon95},\cite[Section 2]{Gon-arxiv} and \cite[Section 3]{nvar} for more details.  We follow the same notation to denote an iterated integral
\[
\int_0^{a_{k+1}}\frac{\dd t}{t-a_1}\circ\cdots\circ\frac{\dd t}{t-a_k}:=\underset{P}{\int\cdots\int}\frac{\dd t_1}{t_1-a_1}\cdots\frac{\dd t_k}{t_k-a_k},
\]
where the $a_j$'s are complex numbers and the path of integration $P$ is a path $0\to t_1\to t_2\to\cdots\to t_{k}\to a_{k+1}$ joining 0 and $a_{k+1}$. The value of this integral depends on the homotopy class of $P$ in $\C\setminus\{a_1,a_2,\dots,a_m\}$. With this notation in mind, the \emph{hyperlogarithm} is defined as
\begin{multline*}
\I_{n_1,\dots,n_m}(a_1:a_2:\cdots:a_{m+1}):=\\
\int_0^{a_{m+1}}\underbrace{\frac{\dd t}{t-a_1}\circ\frac{\dd t}{t}\circ\cdots\circ\frac{\dd t}{t}}_{n_1\text{ times}}\circ\underbrace{\frac{\dd t}{t-a_2}\circ\frac{\dd t}{t}\circ\cdots\circ\frac{\dd t}{t}}_{n_2\text{ times}}\circ\cdots\circ\underbrace{\frac{\dd t}{t-a_m}\circ\frac{\dd t}{t}\circ\cdots\circ\frac{\dd t}{t}}_{n_m\text{ times}},
\end{multline*}
where $n_1,\dots,n_m$ are positive integers.  We also define the \emph{multiple polylogarithm} of length $m$ and height $w=n_1+n_2+\cdots+n_m$ as
\[
\Li_{n_1,\dots,n_m}(x_1,\dots,x_m):=\sum_{1\le k_1<k_2<\cdots<k_m}\frac{x_1^{k_1}x_2^{k_2}\cdots x_m^{k_m}}{k_1^{n_1}k_2^{n_2}\cdots k_m^{n_m}}.
\]
 The sum is absolutely convergent for $|x_j|<1$, and also for $|x_j|\le1$ if $n_m\ge2$. One can show (see \cite{Gon-arxiv}) the following identities
\begin{align}
\Li_{n_1,\dots,n_m}(x_1,\dots,x_m)&=(-1)^m\, \I_{n_1,\dots,n_m}\left(\frac{1}{x_1\cdots x_m}:\frac1{x_2\cdots x_m}:\dots:\frac1{x_m}:1\right)\label{eq:polylogint},\\
\I_{n_1,\dots,n_m}(a_1:\cdots:a_{m+1})&=(-1)^m\,\Li_{n_1,\dots,n_m}\left(\frac{a_2}{a_1},\frac{a_3}{a_2},\dots,\frac{a_{m+1}}{a_m}\right).\label{eq:hypertopoly}
\end{align}
Note that identity \eqref{eq:polylogint} enables an analytic continuation of the multiple polylogarithm using the definition of the hyperlogarithm. The value of the hyperlogarithm function depends on the homotopy class of the path joining 0 and $a_{m+1}$ in $\C\setminus\{a_1,\dots,a_m\}$. This means that these multivalued functions may not be defined uniquely. However, we will always encounter linear combinations of these functions that yield single-valued functions. For example, in Section \ref{sec:polylogsaslvals}, we will see that the Mahler measure of $Q_n$ can be expressed as sums of integrals such as 
\[
\int_0^1\frac{\log^{2h+1}t}{t^2+t+1}\dd t,
\]
which are well-defined. Note that we have
\[\frac{1}{t^2+t+1}=\frac{1}{i\sqrt3}\left(\frac1{t-\omega}-\frac1{t-\omega^2}\right),\]
where $\omega=e^{2\pi i/3}$ is a third root of unity. For $a\in\C^*$, we have 
\begin{align*}
\int_0^1 \log^{2h}t\,\frac{1}{t-a}\dd t&= (-1)^{2h}(2h)!\int_0^1 \frac{\dd t}{t-a}\circ \overbrace{\frac{\dd t}{t}\circ\cdots\circ \frac{\dd t}{t}}^\text{$2h$ times}\\
&=(2h)!\cdot\mathrm{I}_{2h+1}(a,1)=-(2h)!\,\mathrm{Li}_{2h+1}(1/a).
\end{align*}
Using these, we may write
\[
\int_0^1\frac{\log^{2h+1}t}{t^2+t+1}\dd t=(2h)!\left(\mathrm{I}_{2h+1}(\omega,1)-\mathrm{I}_{2h+1}(\omega^2,1)\right),
\]
which shows that while the individual hyperlogarithms $\mathrm{I}_{2h+1}(\omega,1)$ and $\mathrm{I}_{2h+1}(\omega^2,1)$ are undetermined, due to the multi-valued nature of the hyperlogarithm, their difference is well-defined.\\

In Section \ref{sec:polylogsaslvals}, we will use the hyperlogarithm identities to write the integral defining the Mahler measure in terms of polylogarithms, which would then give us certain $L$-values.

\section{Some general integrals}\label{sec:generalintegrals}
In this section, we will evaluate certain single-variable integrals involving arbitrary powers of the logarithm. At the end of this section, we will also discuss an alternate method of computing these integrals. In Section \ref{sec:compute} we will use these formulae to explicitly compute the Mahler measure of a certain family of polynomials. 

\subsection{The integral $f_1(k)$}\label{sec:f1}
First, we wish to evaluate the following integral which we denote by $f_1(k)$:
\[
f_1(k)=\int_0^\infty\frac{t\log^kt}{(t^2+at+a^2)(t^2+bt+b^2)}\dd t,
\]
for any integer $k\ge0$ and $a,b\in\R^+$. Here, and in the entire article, the symbol $\infty$ will always mean $+\infty$. To compute the integral $f_1(k)$, we evaluate a contour integral 
\begin{equation}\label{cont:1}
\oint_C\frac{z\log^{k+1}z}{(z^2+az+a^2)(z^2+bz+b^2)}\dd z, 
\end{equation}
with the keyhole contour $C$ as given by Figure \ref{fig:contour2}, comprising of the paths labelled  $C_\varepsilon, \gamma_1,C_R$ and $\gamma_2$. Note that this keyhole contour is chosen in order to skip the point $z=0$. We choose the branch cut of the logarithm to be the positive real axis so that the imaginary part of the logarithm function has the range $(0,2\pi)$. The path $C_\varepsilon$ is a circular path centred at $z=0$ with small radius $\varepsilon>0$, drawn such that it doesn't cross the positive real axis. Similarly, $C_R$ is another circular path centred at the origin, with large radius $R$, and drawn such that it doesn't cross the positive real axis. The integrals corresponding to paths $C_\varepsilon$ and $C_R$ will vanish as $\varepsilon\to0$ and $R\to\infty$. Indeed, we can parametrize the integration path along $C_\varepsilon$ by $z=\varepsilon e^{i\theta}$, where $\theta$ varies from $2\pi^-$ to $0^+$. Thus, taking the limit $\varepsilon\to0$, the integral in \eqref{cont:1} along $C_\varepsilon$ is given by
\begin{align}\label{cont:limit1}
\lim_{\varepsilon\to0}\int_{2\pi^-}^{0^+}\frac{\varepsilon e^{i\theta}(\log \varepsilon+i\theta)^{k+1}}{(\varepsilon^2 e^{2i\theta}+a\varepsilon e^{i\theta}+a^2)(\varepsilon^2 e^{2i\theta}+b\varepsilon e^{i\theta}+b^2)}\cdot i\varepsilon e^{i\theta}\dd\theta.
\end{align}
Let the integrand in \eqref{cont:limit1} be given by the function 
\[
h(\varepsilon,\theta)=i\varepsilon e^{i\theta}\cdot\frac{\varepsilon e^{i\theta}(\log \varepsilon+i\theta)^{k+1}}{(\varepsilon^2 e^{2i\theta}+a\varepsilon e^{i\theta}+a^2)(\varepsilon^2 e^{2i\theta}+b\varepsilon e^{i\theta}+b^2)}.
\]
Choose a positive real number $\varepsilon_0>0$ such that $\varepsilon_0<\min\{a,b\}$. This means that the roots of the equation $z^2+az+a^2=0$ and $z^2+bz+b^2=0$ lie outside $C_{\varepsilon_0}$, and thus, if $\varepsilon<\varepsilon_0$, then
\[
|\varepsilon^2 e^{2i\theta}+a\varepsilon e^{i\theta}+a^2|\ge |a-\varepsilon_0|^2>0,
\]
and 
\[
|\varepsilon^2 e^{2i\theta}+b\varepsilon e^{i\theta}+b^2|\ge |b-\varepsilon_0|^2>0.
\]
Therefore, we have
\begin{align*}
|h(\varepsilon,\theta)|&\le\frac{\varepsilon^2|\log\varepsilon+2\pi i|}{|\varepsilon^2 e^{2i\theta}+a\varepsilon e^{i\theta}+a^2|\cdot|\varepsilon^2 e^{2i\theta}+b\varepsilon e^{i\theta}+b^2|}\\
&\le\frac{\varepsilon^2\sqrt{\log^2\varepsilon+4\pi^2}}{(a-\varepsilon_0)^2(b-\varepsilon_0)^2},
\end{align*}
for $0<\varepsilon<\varepsilon_0$. 
Using this upper bound in \eqref{cont:limit1}, we get 
\begin{align*}
&\left|\lim_{\varepsilon\to0}\int_{2\pi^-}^{0^+}\frac{\varepsilon e^{i\theta}(\log \varepsilon+i\theta)^{k+1}}{(\varepsilon^2 e^{2i\theta}+a\varepsilon e^{i\theta}+a^2)(\varepsilon^2 e^{2i\theta}+b\varepsilon e^{i\theta}+b^2)}\cdot i\varepsilon e^{i\theta}\dd\theta\right|\\
&\qquad\qquad=\lim_{\varepsilon\to0}\left|\int_{2\pi^-}^{0^+}\frac{\varepsilon e^{i\theta}(\log \varepsilon+i\theta)^{k+1}}{(\varepsilon^2 e^{2i\theta}+a\varepsilon e^{i\theta}+a^2)(\varepsilon^2 e^{2i\theta}+b\varepsilon e^{i\theta}+b^2)}\cdot i\varepsilon e^{i\theta}\dd\theta\right|\\
&\qquad\qquad\le\lim_{\varepsilon\to0}\int_{0^+}^{2\pi^-}\left|\frac{\varepsilon e^{i\theta}(\log \varepsilon+i\theta)^{k+1}}{(\varepsilon^2 e^{2i\theta}+a\varepsilon e^{i\theta}+a^2)(\varepsilon^2 e^{2i\theta}+b\varepsilon e^{i\theta}+b^2)}\cdot i\varepsilon e^{i\theta}\right|\dd\theta\\
&\qquad\qquad\le\lim_{\varepsilon\to0}\int_{0^+}^{2\pi^-}\frac{\varepsilon^2\sqrt{\log^2\varepsilon+4\pi^2}}{(a-\varepsilon_0)^2(b-\varepsilon_0)^2}\dd\theta=\lim_{\varepsilon\to0}\frac{\varepsilon^2\sqrt{\log^2\varepsilon+4\pi^2}}{(a-\varepsilon_0)^2(b-\varepsilon_0)^2}\cdot2\pi=0.
\end{align*}

Thus, the contribution to the contour integral \eqref{cont:1} along $C_\epsilon$ is zero.\\

Similarly, the contribution along $C_R$ as $R\to\infty$ is given by the integral
\begin{equation*}\lim_{R\to\infty}\int_{0^+}^{2\pi^-}h(R,\theta)\dd\theta.
\end{equation*}
For large enough $R$, we have that $|R^2 e^{2i\theta}+aR e^{i\theta}+a^2|>(R-a)^2$ and $|R^2 e^{2i\theta}+bR e^{i\theta}+b^2|>(R-b)^2$, which means that
\begin{align*}
|h(R,\theta)|&\le\frac{R^2|\log R+2\pi i|}{|R^2 e^{2i\theta}+aR e^{i\theta}+a^2|\cdot|R^2 e^{2i\theta}+bR e^{i\theta}+b^2|}\\
&\le\frac{R^2\sqrt{\log^2R+4\pi^2}}{(R-a)^2(R-b)^2}.
\end{align*}
As $R\to\infty$, we have
\[
\frac{R^2\sqrt{\log^2R+4\pi^2}}{(R-a)^2(R-b)^2}\to0,
\]and a similar argument as before tells us that the contribution corresponding to $C_R$ is also zero.\\

What remains is the combination of $\gamma_1$ and $\gamma_2$. The path $\gamma_1$ is a straight line just above the positive real axis that connects the end of the path $C_\varepsilon$ and the start of $C_R$. The argument of the complex logarithm on this path is a small positive angle which goes to zero as $\varepsilon\to0$. Similarly, the path $\gamma_2$  is a straight line just below the real axis, joining the end of $C_R$ and the start of $C_\varepsilon$, where the argument of the complex logarithm is taken to be $2\pi $ in the limit. Thus, as $\varepsilon\to0$, the respective contributions of $\gamma_1$ and $\gamma_2$ give the sum
\begin{align}\label{int:contour}
&\int_0^\infty\frac{t\log^{k+1} t}{(t^2+at+a^2)(t^2+bt+b^2)}\dd t+\int_\infty^0\frac{t(\log t+2\pi i)^{k+1}}{(t^2+at+a^2)(t^2+bt+b^2)}\dd t.
\end{align}
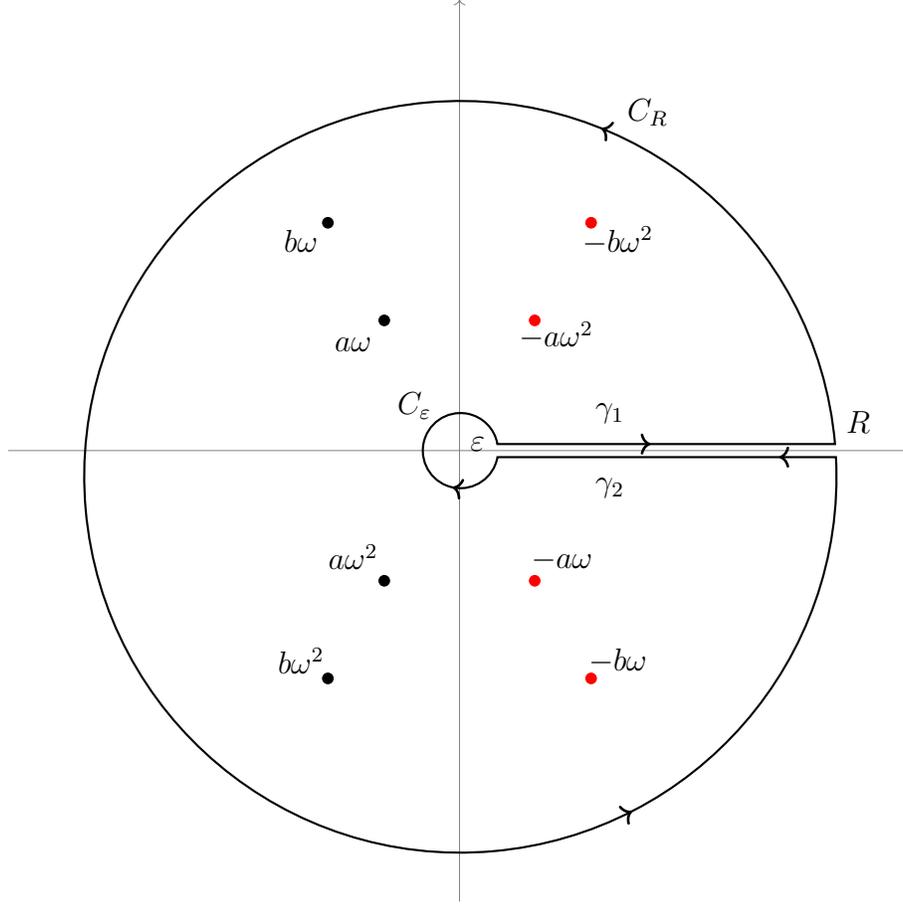
\begin{figure}[htb]
\caption[The contour for general $k$]
{The keyhole contour $C$ for $f_j(k)$ - the black dots denote the poles for $f_1(k)$, while the red dots denote the poles for $f_2(k)$.}
\centering
\begin{tikzpicture}
[decoration={markings,
mark=at position 2.05cm with {\arrow[line width=1pt]{>}},
mark=at position 10cm with {\arrow[line width=1pt]{>}},
mark=at position 30cm with {\arrow[line width=1pt]{>}},
mark=at position 36.5cm with {\arrow[line width=1pt]{>}},
mark=at position 41.05cm with {\arrow[line width=1pt]{>}},
mark=at position 43.5cm with {\arrow[line width=1pt]{>}}
}
]
\label{fig:contour2}
\draw[help lines,->] (-6,0) -- (6,0) coordinate (xaxis);
\draw[help lines,->] (0,-6) -- (0,6) coordinate (yaxis);

\path[draw,line width=0.8pt,postaction=decorate] (10:0.5) node[left] {$\varepsilon$} -- ++(4.5,0) node[above right] {$R$} arc(5:363:5)-- ++(-4.5,0)  arc (-10:-350:0.5);

\filldraw[black] (120:3.5) circle (2pt);
\node at (127:3.5) {$b\omega$};
\filldraw[black] (240:3.5) circle (2pt);
\node at (233:3.5) {$b\omega^2$};
\filldraw[black] (120:2) circle (2pt);
\node at (135:2) {$a\omega$};
\filldraw[black] (240:2) circle (2pt);
\node at (225:2) {$a\omega^2$};
\node at (53:3.5) {$-b\omega^2$};
\filldraw[red] (300:3.5) circle (2pt);
\node at (307:3.5) {$-b\omega$};
\filldraw[red] (60:3.5) circle (2pt);
\node at (50:2) {$-a\omega^2$};
\filldraw[red] (300:2) circle (2pt);
\node at (313:2) {$-a\omega$};
\filldraw[red] (60:2) circle (2pt);
\node at (-0.6,0.6) {$C_{\varepsilon}$};
\node at (2,0.5) {$\gamma_1$};
\node at (2.5,4.5) {$C_{R}$};
\node at (2,-0.5) {$\gamma_2$};
\end{tikzpicture}
\end{figure}
Writing $\theta=2\pi/3$ so that $2\pi=3\theta$,  and using the binomial expansion, we have the sum of the integrals in \eqref{int:contour} to be equal to
\begin{equation}\label{eq:contint}
-\int_0^\infty\frac{t\sum_{j=1}^{k+1}{{k+1}\choose j}\log^{k+1-j}t \;(3\theta)^ji^j}{(t^2+at+a^2)(t^2+bt+b^2)}\dd t.
\end{equation}
By the residue theorem, this expression will be equal to the sum of residues inside the contour of integration. To calculate the value of $f_1(k)$, we will compare the imaginary part of equation \eqref{eq:contint} with the imaginary part of the sum of residues.  Collecting the purely imaginary terms in the contour integration \eqref{eq:contint} gives
\begin{align}\label{eq:intk}
&-\sum_{j=1,\text{ odd}}^{k+1}{{k+1}\choose j}(3\theta)^j\,i^j\,\int_0^\infty\frac{t\log^{k+1-j}t}{(t^2+at+a^2)(t^2+bt+b^2)}\dd t\nonumber\\
=&-\sum_{j=1,\text{ odd}}^{k+1}{{k+1}\choose j}(3\theta)^j\,i^j\,f_1(k+1-j)\nonumber\\
=&-3i\theta(k+1)f_1(k)-\sum_{j>1,\text{ odd}}^{k+1}{{k+1}\choose j}(3\theta)^j\,i^j\,f_1(k+1-j).
\end{align}
Next, we compute the sum of residues at the poles of the integrand in $f_1(k)$ lying inside the contour.  These poles are denoted by black dots at $z=a\omega,a\omega^2,b\omega$ and $b\omega^2$ in Figure \ref{fig:contour2}. The residue calculation is given by
\begin{align}\label{eq:rescalc}
&2\pi i\Biggl[\frac{a\omega(\log a+i\theta)^{k+1}}{(a\omega-a\omega^2)(a\omega-b\omega)(a\omega-b\omega^2)}+\frac{a\omega^2(\log a+2i\theta)^{k+1}}{(a\omega^2-a\omega)(a\omega^2-b\omega)(a\omega^2-b\omega^2)}\nonumber\\
&\qquad\qquad\qquad\qquad+\frac{b\omega(\log b+i\theta)^{k+1}}{(b\omega-a\omega)(b\omega-a\omega^2)(b\omega-b\omega^2)}+\frac{b\omega^2(\log b+2i\theta)^{k+1}}{(b\omega^2-a\omega)(b\omega^2-a\omega^2)(b\omega^2-b\omega)}\Biggr]\nonumber\\
=&2\pi i\left[\frac{(a\omega^2-b\omega)\Big((\log a+i\theta)^{k+1}-(\log b+2i\theta)^{k+1}\Big)+(a\omega-b\omega^2)\Big((\log b+i\theta)^{k+1}-(\log a+2i\theta)^{k+1}\Big)}{(\omega-\omega^2)(a^3-b^3)}\right]\nonumber\\
=&2\pi \left[\frac{(a\omega^2-b\omega)\Big((\log a+i\theta)^{k+1}-(\log b+2i\theta)^{k+1}\Big)+(a\omega-b\omega^2)\Big((\log b+i\theta)^{k+1}-(\log a+2i\theta)^{k+1}\Big)}{\sqrt3(a^3-b^3)}\right].
\end{align}
We wish to isolate the purely imaginary terms in \eqref{eq:rescalc}. We have
\[
a\omega^2-b\omega=-\frac{(a-b)}2-\frac{\sqrt3(a+b)}2\;i,
\]
whose conjugate gives $a\omega-b\omega^2$. We can write
\begin{equation*}
(\log a+i\theta)^{k+1}=\sum_{j=0,\text{ even}}^{k+1}{{k+1}\choose{j}}i^{j}\theta^j\log^{k+1-j}a+\sum_{j=1,\text{ odd}}^{k+1}{{k+1}\choose{j}}i^{j}\theta^j\log^{k+1-j}a, 
\end{equation*}
where the first summation corresponds to the real part and the second summation corresponds to the imaginary part. We may write the remaining $(k+1)^{\text{th}}$ powers appearing in \eqref{eq:rescalc} in a similar manner as a sum of real and imaginary parts. Carrying out the appropriate products in \eqref{eq:rescalc}, and collecting only the purely imaginary terms together, we obtain
\begin{multline}\label{eq:resk}
\frac{-\pi}{\sqrt3(a^3-b^3)}\Biggl[\sqrt3(a+b)\sum_{j=0,\text{ even}}^{k+1}{{k+1}\choose{j}}(2^j+1)i^{j+1}\theta^j(\log^{k+1-j}a-\log^{k+1-j}b)\\
+ (a-b)\sum_{j=1,\text{ odd}}^{k+1}{{k+1}\choose{j}}(2^j-1)i^{j}\theta^j(\log^{k+1-j}a+\log^{k+1-j}b) \Biggr].
\end{multline}
Expression \eqref{eq:resk} forms the imaginary term in the sum of residues, which we equate to expression \eqref{eq:intk} to get
\begin{multline*}
-3i\theta(k+1)f_1(k)-\sum_{j>1,\text{ odd}}^{k+1}{{k+1}\choose j}(3\theta)^j\,i^j\,f_1(k+1-j)\\
=\frac{-\pi}{\sqrt3(a^3-b^3)}\Biggl[\sqrt3(a+b)\sum_{j=0,\text{ even}}^{k+1}{{k+1}\choose{j}}(2^j+1)i^{j+1}\theta^j(\log^{k+1-j}a-\log^{k+1-j}b)\\
+ (a-b)\sum_{j=1,\text{ odd}}^{k+1}{{k+1}\choose{j}}(2^j-1)i^{j}\theta^j(\log^{k+1-j}a+\log^{k+1-j}b) \Biggr].
\end{multline*}
Recall again that $3\theta=2\pi$. Dividing throughout by $3\theta i(k+1)$ and isolating for $f_1(k)$, we obtain
\begin{multline}\label{eq:f1sum}
f_1(k)=\frac1{k+1}\sum_{j>1,\text{ odd}}^{k+1}(-1)^{\frac{j+1}2}{{k+1}\choose j}(3\theta)^{j-1}f_1(k+1-j)\\
+\frac{(a+b)}{2(k+1)(a^3-b^3)}\sum_{j=0,\text{ even}}^{k+1}(-1)^{\frac j2}\theta^j{{k+1}\choose{j}}(2^j+1)(\log^{k+1-j}a-\log^{k+1-j}b)\\
+\frac{1}{2\sqrt3(k+1)(a^2+ab+b^2)}\sum_{j=1,\text{ odd}}^{k+1}(-1)^{\frac{j+1}2}\theta^j{{k+1}\choose{j}}(2^j-1)(\log^{k+1-j}a+\log^{k+1-j}b).
\end{multline}
To write the above expression in a more systematic way,  we will define two polynomials in a recursive manner. Observe that in the expression for $f_1(k)$ in \eqref{eq:f1sum}, there are three parts as $j$ varies from $1$ to $k+1$ -- one involving lower terms $f_1(k+1-j)$:
\[
\frac1{k+1}\sum_{j>1,\text{ odd}}^{k+1}(-1)^{\frac{j+1}2}{{k+1}\choose j}(3\theta)^{j-1}f_1(k+1-j),
\]
where $k+1-j$ is strictly smaller than $k$; the second part which corresponds to the even values of $j$:
\[
\frac{(a+b)}{2(k+1)(a^3-b^3)}\sum_{j=0,\text{ even}}^{k+1}(-1)^{\frac j2}\theta^j{{k+1}\choose{j}}(2^j+1)(\log^{k+1-j}a-\log^{k+1-j}b);
\]
and finally the third part that corresponds to the odd values of $j$:
\[
\frac{1}{2\sqrt3(k+1)(a^2+ab+b^2)}\sum_{j=1,\text{ odd}}^{k+1}(-1)^{\frac{j+1}2}\theta^j{{k+1}\choose{j}}(2^j-1)(\log^{k+1-j}a+\log^{k+1-j}b).
\]
We will define two polynomials with rational coefficients $R_k(x)$ and $S_k(x)$, one for the even values of $j$ (the second part), and one for the odd values (the third part), respectively. The first part will define the recursive property of both of these polynomials.  The idea is to write $\frac{\log a}{\theta}$ as $x$, so that we may replace
\[
\theta^j \cdot(\log^{k+1-j} a) \quad\text{by}\quad\theta^{k+1} x^{k+1-j},
\]
and similarly for $\frac{\log b}{\theta}$.  Consequently, we define
\begin{multline}\label{eq:rk}
R_k(x)=\frac1{k+1}\sum_{j>1,\text{ odd}}^{k+1}(-1)^{\frac{j+1}2}{{k+1}\choose j}3^{j-1}R_{k+1-j}(x)\\
+\frac{1}{2(k+1)}\sum_{j=0,\text{ even}}^{k+1}(-1)^{\frac j2}{{k+1}\choose{j}}(2^j+1)x^{k+1-j},
\end{multline}
and
\begin{multline}\label{eq:sk}
S_k(x)=\frac1{k+1}\sum_{j>1,\text{ odd}}^{k+1}(-1)^{\frac{j+1}2}{{k+1}\choose j}3^{j-1}S_{k+1-j}(x)\\
+\frac{1}{2(k+1)}\sum_{j=1,\text{ odd}}^{k+1}(-1)^{\frac{j+1}2}{{k+1}\choose{j}}(2^j-1)x^{k+1-j}.
\end{multline}
Usually, we need to specify initial values to completely determine a family of recursive polynomials. However, in our case we do not need to do so since there is also a non-recursive part in the definition of the polynomials which serves to define the initial values (at $k=0$). For instance, we have 
\[
R_0(x)=\frac{1}{2}(2)x=x,
\]
and
\[
S_0(x)=\frac{1}{2}(-1)(1)=-\frac12.
\]
We also have the following examples:
\renewcommand{\arraystretch}{2.5}
\renewcommand{\tabcolsep}{0.5cm}
\[\begin{array}{|c|c|}
\hline 
R_k(x) & S_k(x)\\
\hline
\displaystyle R_1(x)=\frac{x^2}2-\frac54 &\displaystyle S_1(x)=-\frac{x}2\\
\hline
\displaystyle R_2(x)=\frac{x^3}3+\frac{x}2 &\displaystyle S_2(x)=-\frac{x^2}2-\frac13\\
\hline
\displaystyle R_3(x)=\frac{x^4}4+\frac{3x^2}4-\frac{73}8 &\displaystyle S_3(x)=-\frac{x^3}2-x.\\
\hline
\end{array}\]
One may also note that the degree of $R_k(x)$ is $k+1$ while that of $S_k(x)$ is $k$.
Lastly, we write the final expression for $f_1(k)$ from \eqref{eq:f1sum},
\begin{multline}\label{eq:fk}
f_1(k)=\frac{\theta^{k+1}(a+b)}{a^3-b^3}\Bigg[R_k\left(\frac{\log a}\theta\right)-R_k\left(\frac{\log b}\theta\right)\Bigg]\\+\frac{\theta^{k+1}}{\sqrt3(a^2+ab+b^2)}\Bigg[S_k\left(\frac{\log a}\theta\right)+S_k\left(\frac{\log b}\theta\right)\Bigg].
\end{multline}

\subsection{The integral $f_2(k)$}\label{sec:f2}
Next, we evaluate an integral similar to $f_1(k)$ from Section \ref{sec:f1} but with a minor modification. We take $a,b\in\R^+$ and consider the integral
\[
f_2(k)=\int_0^\infty\frac{t\log^kt}{(t^2-at+a^2)(t^2-bt+b^2)}\dd t.
\]
Note the changes in sign in the expression in the denominator as compared to $f_1(k)$. The contour integral we evaluate in this case is
\[
\oint_C\frac{z\log^{k+1}z}{(z^2-az+a^2)(z^2-bz+b^2)}\dd z, 
\]
with contour as given by Figure \ref{fig:contour2}.  As before, the contribution along the paths $C_\varepsilon$ and $C_R$ will be zero. Combining the contributions along $\gamma_1$ and $\gamma_2$ and collecting the imaginary terms using the binomial expansion we get a term similar to \eqref{eq:intk}:
\begin{align}\label{eq:recf2}
-3i\theta(k+1)f_2(k)-\sum_{j>1,\text{ odd}}^{k+1}{{k+1}\choose j}(3\theta)^j\,i^j\,f_2(k+1-j).
\end{align}
Now we compute the residues which are obtained at $z=-a\omega,-a\omega^2,-b\omega$ and $-b\omega^2$. The only difference here is that the arguments will be $\delta=\pi/3$ and $5\delta=5\pi/3$ this time. The residue calculation is given by
\begin{align*}
&2\pi i\Biggl[\frac{-a\omega(\log a+5i\delta)^{k+1}}{(-a\omega+a\omega^2)(-a\omega+b\omega)(-a\omega+b\omega^2)}+\frac{-a\omega^2(\log a+i\delta)^{k+1}}{(-a\omega^2+a\omega)(-a\omega^2+b\omega)(-a\omega^2+b\omega^2)}\\
&\qquad\qquad\qquad\qquad+\frac{-b\omega(\log b+5i\delta)^{k+1}}{(-b\omega+a\omega)(-b\omega+a\omega^2)(-b\omega+b\omega^2)}+\frac{-b\omega^2(\log b+i\delta)^{k+1}}{(-b\omega^2+a\omega)(-b\omega^2+a\omega^2)(-b\omega^2+b\omega)}\Biggr]\\
=&2\pi i\left[\frac{(a\omega^2-b\omega)\Big((\log a+5i\delta)^{k+1}-(\log b+i\delta)^{k+1}\Big)+(a\omega-b\omega^2)\Big((\log b+5i\delta)^{k+1}-(\log a+i\delta)^{k+1}\Big)}{(\omega-\omega^2)(a^3-b^3)}\right].
\end{align*}
Collecting imaginary terms together gives
\begin{multline}\label{eq:resk2}
\frac{-\pi}{\sqrt3(a^3-b^3)}\Biggl[\sqrt3(a+b)\sum_{j=0,\text{ even}}^{k+1}{{k+1}\choose{j}}(5^j+1)i^{j+1}\delta^j(\log^{k+1-j}a-\log^{k+1-j}b)\\
+ (a-b)\sum_{j=1,\text{ odd}}^{k+1}{{k+1}\choose{j}}(5^j-1)i^{j}\delta^j(\log^{k+1-j}a+\log^{k+1-j}b) \Biggr].
\end{multline}
Equating \eqref{eq:recf2} and \eqref{eq:resk2} and rearranging, we obtain
\begin{multline}\label{eq:f2sum}
f_2(k)=\frac1{k+1}\sum_{j>1,\text{ odd}}^{k+1}(-1)^{\frac{j+1}2}{{k+1}\choose j}(3\theta)^{j-1}f_2(k+1-j)\\
+\frac{(a+b)}{2(k+1)(a^3-b^3)}\sum_{j=0,\text{ even}}^{k+1}(-1)^{\frac j2}\delta^j{{k+1}\choose{j}}(5^j+1)(\log^{k+1-j}a-\log^{k+1-j}b)\\
+\frac{1}{2\sqrt3(k+1)(a^2+ab+b^2)}\sum_{j=1,\text{ odd}}^{k+1}(-1)^{\frac{j-1}2}\delta^j{{k+1}\choose{j}}(5^j-1)(\log^{k+1-j}a+\log^{k+1-j}b).
\end{multline}

Once again, we use two polynomials -- $P_k(x)$ corresponding to the even indices and $Q_k(x)$ to the odd indices in \eqref{eq:f2sum}, both having a recursive part corresponding to the recursive part in \eqref{eq:f2sum}.  Note that $3\theta=6\delta=2\pi$. Once again, we would like to replace $\frac{\log a}{\delta}$ by $x$ so that
\[
\delta^j \cdot(\log^{k+1-j} a)=\delta^{k+1} x^{k+1-j},
\]
and similarly for $\frac{\log b}{\delta}$.
We define 
\begin{multline}\label{eq:pk}
P_k(x)=\frac1{k+1}\sum_{j>1,\text{ odd}}^{k+1}(-1)^{\frac{j+1}2}{{k+1}\choose j}6^{j-1}P_{k+1-j}(x)\\
+\frac{1}{2(k+1)}\sum_{j=0,\text{ even}}^{k+1}(-1)^{\frac j2}{{k+1}\choose{j}}(5^j+1)x^{k+1-j},
\end{multline}
and
\begin{multline}\label{eq:qk}
Q_k(x)=\frac1{k+1}\sum_{j>1,\text{ odd}}^{k+1}(-1)^{\frac{j+1}2}{{k+1}\choose j}6^{j-1}Q_{k+1-j}(x)\\
+\frac{1}{2(k+1)}\sum_{j=1,\text{ odd}}^{k+1}(-1)^{\frac{j-1}2}{{k+1}\choose{j}}(5^j-1)x^{k+1-j}.
\end{multline}
We list the first few examples:
\renewcommand{\arraystretch}{2.5}
\renewcommand{\tabcolsep}{0.5cm}
\[\begin{array}{|c|c|}
\hline 
P_k(x) & Q_k(x)\\
\hline
\displaystyle P_0(x)=x &\displaystyle Q_0(x)=2\\
\hline
\displaystyle P_1(x)=\frac{x^2}2-\frac{13}2 &\displaystyle Q_1(x)=2x\\
\hline
\displaystyle P_2(x)=\frac{x^3}3-x &\displaystyle Q_2(x)=2x^2+\frac{10}3\\
\hline
\displaystyle P_3(x)=\frac{x^4}4-\frac{3x^2}2-\frac{623}4 &\displaystyle Q_3(x)=2x^3+10x.\\
\hline
\end{array}\]

Thus, using the above definitions in \eqref{eq:f2sum}, we obtain
\begin{multline}\label{eq:f2k}
f_2(k)=\frac{\delta^{k+1}(a+b)}{a^3-b^3}\Bigg[P_k\left(\frac{\log a}\delta\right)-P_k\left(\frac{\log b}\delta\right)\Bigg]\\+\frac{\delta^{k+1}}{\sqrt3(a^2+ab+b^2)}\Bigg[Q_k\left(\frac{\log a}\delta\right)+Q_k\left(\frac{\log b}\delta\right)\Bigg].
\end{multline}

\subsection{The integral $g_1(k)$}
\label{sec:g1}
Now we look at evaluating 
\[
g_1(k) =\int_0^\infty \frac{t(t+a)\log^k t}{(t^3-a^3)(t^2+bt+b^2)}\dd t,
\]
with $a,b\in\R^+$.  As before, we consider the integral
\begin{equation}\label{cont:3}
\oint_{C'} \frac{z(z+a)\log^{k+1} z}{(z^3-a^3)(z^2+bz+b^2)}\dd z,
\end{equation}
with a modified keyhole contour $C'$ as shown in Figure ~\ref{fig:contour3}. Note that in this case, the keyhole contour skips the points $z=0$ and $z=a$, and consists of the paths labelled $C_\varepsilon,\gamma_1,C_1,\gamma_2,C_R,\gamma_3,C_2$ and $\gamma_4$.\\
\begin{figure}[htb]
\caption{The contour $C'$ for $g_j(k)$ - the black dots denote the poles for $g_1(k)$, while the red dots denote the poles for $g_2(k)$.  Note that $z=a$ is outside the contour.}
\centering
\begin{tikzpicture}
[decoration={markings,
mark=at position 1.05cm with {\arrow[line width=1pt]{>}},
mark=at position 4.5cm with {\arrow[line width=1pt]{>}},
mark=at position 10cm with {\arrow[line width=1pt]{>}},
mark=at position 30cm with {\arrow[line width=1pt]{>}},
mark=at position 37.75cm with {\arrow[line width=1pt]{>}},
mark=at position 39.5cm with {\arrow[line width=1pt]{>}},
mark=at position 41.05cm with {\arrow[line width=1pt]{>}},
mark=at position 43.5cm with {\arrow[line width=1pt]{>}}
}
]
\label{fig:contour3}
\draw[help lines,->] (-6,0) -- (6,0) coordinate (xaxis);
\draw[help lines,->] (0,-6) -- (0,6) coordinate (yaxis);

\path[draw,line width=0.8pt,postaction=decorate] (10:0.5) node[left] {$\varepsilon$} -- ++(1.5,0) node[right] {$r$} arc (180:0:0.5)--+(2,0) node[above right] {$R$} arc(5:363:5)-- ++(-2,0) arc(0:-180:0.5) --+(-1.5,0) arc (-10:-350:0.5);
\filldraw[red] (180:2.5) circle (2pt);

\filldraw[black] (120:3.5) circle (2pt);
\node[below right] at (-2.5,-0.05) { $-a$};

\filldraw[black] (120:3.5) circle (2pt);
\node at (127:3.5) {$b\omega$};
\filldraw[black] (240:3.5) circle (2pt);
\node at (233:3.5) {$b\omega^2$};
\filldraw[black] (120:2) circle (2pt);
\node at (135:2) {$a\omega$};
\filldraw[black] (240:2) circle (2pt);
\node at (225:2) {$a\omega^2$};
\filldraw[green] (2.5,0) circle (2pt);
\node[below right] at (2.5,0.05) { $a$};
\node at (53:3.5) {$-b\omega^2$};
\filldraw[red] (300:3.5) circle (2pt);
\node at (307:3.5) {$-b\omega$};
\filldraw[red] (60:3.5) circle (2pt);
\node at (50:2) {$-a\omega^2$};
\filldraw[red] (300:2) circle (2pt);
\node at (313:2) {$-a\omega$};
\filldraw[red] (60:2) circle (2pt);

\node at (-0.6,0.6) {$C_{\varepsilon}$};
\node at (1.2,0.5) {$\gamma_1$};
\node at (2.5,0.9) {$C_{1}$};
\node at (4.0,0.5) {$\gamma_2$};
\node at (2.5,4.5) {$C_{R}$};
\node at (4.0,-0.5) {$\gamma_3$};
\node at (2.5,-0.9) {$C_{2}$};
\node at (1.2,-0.5) {$\gamma_4$};
\end{tikzpicture}
\end{figure}
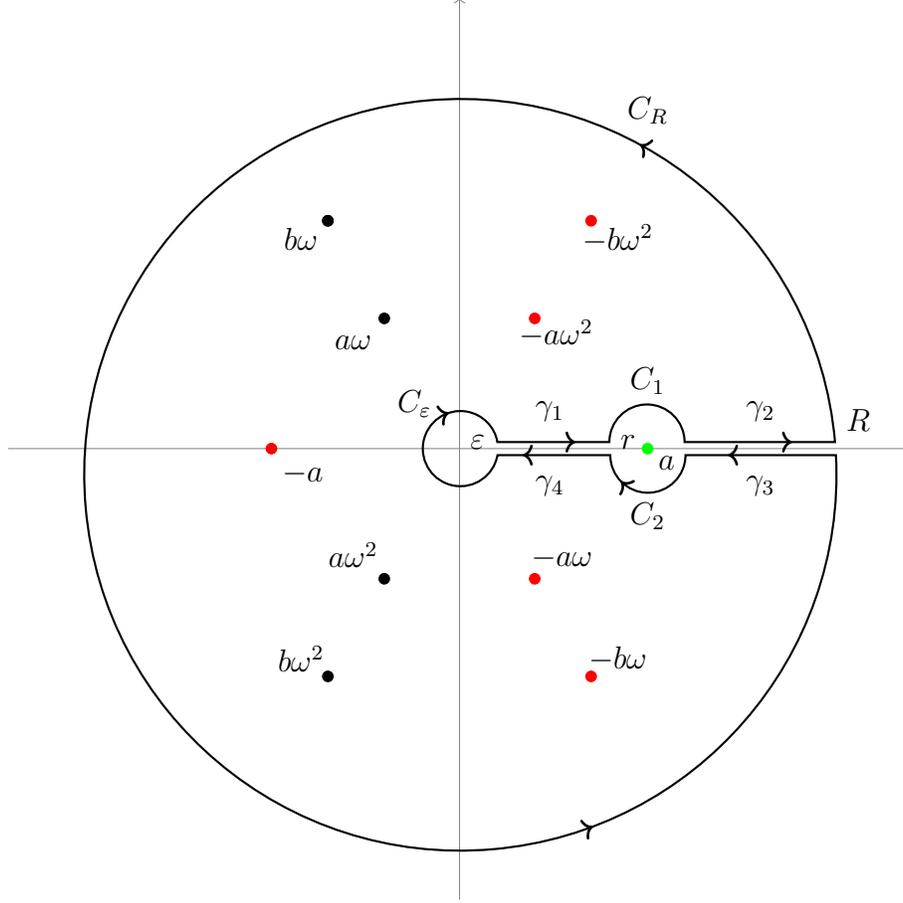
Note that $C_1$ is a circle of radius $r>0$ with centre at $z=a$, and the path of integration along $C_1$ can be parametrized by $z=a+re^{i\theta}$, where $\theta$ varies from $\pi^-$ to $0^+$.  Thus,  taking the limit $r\to0^+$, the integral \eqref{cont:3} along $C_1$ is given by
\begin{align}
&\lim_{r\to0^+}\int_{\pi^-}^{0^+}\frac{(a+re^{i\theta})(2a+re^{i\theta})\log^{k+1}(a+re^{i\theta})}{\big((a+re^{i\theta})^3-a^3\big)\big((a+re^{i\theta})^2+b(a+re^{i\theta})+b^2\big)}\cdot ire^{i\theta}\dd\theta\nonumber\\
=&\lim_{r\to0^+}\int_{\pi^-}^{0^+}\frac{(a+re^{i\theta})(2a+re^{i\theta})\log^{k+1}(a+re^{i\theta})}{\big(r^3e^{3i\theta}+3ar^2e^{2i\theta}+3a^2re^{i\theta}\big)\big(r^2e^{2i\theta}+(2a+b)re^{i\theta}+a^2+ab+b^2\big)}\cdot ire^{i\theta}\dd\theta\nonumber\\
\label{cont:limit}=&\lim_{r\to0^+}\int_{\pi^-}^{0^+}\frac{(a+re^{i\theta})(2a+re^{i\theta})\log^{k+1}(a+re^{i\theta})}{\big(r^2e^{2i\theta}+3are^{i\theta}+3a^2\big)\big(r^2e^{2i\theta}+(2a+b)re^{i\theta}+a^2+ab+b^2\big)}\cdot i\dd\theta
\end{align}
Let us denote by $h(r,\theta)$, the integrand 
\[
h(r,\theta)=i\cdot\frac{(a+re^{i\theta})(2a+re^{i\theta})\log^{k+1}(a+re^{i\theta})}{\big(r^2e^{2i\theta}+3are^{i\theta}+3a^2\big)\big(r^2e^{2i\theta}+(2a+b)re^{i\theta}+a^2+ab+b^2\big)}
\]
Note that the functions $(a+re^{i\theta})$, $(2a+re^{i\theta})$ and $\log^{k+1}(a+re^{i\theta})$ are continuous for $0<r<1$ and $\theta\in(0,\pi)$. This means we can find upper bounds for these functions in the bounded region  $0<r<1$ and $\theta\in(0,\pi)$. For instance, using the triangle inequality, we have $|a+re^{i\theta}|\le|a|+|1|=a+1$ and $|2a+re^{i\theta}|<2a+1$. We can also write 
\[
|\log{(a+re^{i\theta})}|\le\big|\log|a+re^{i\theta}|+i\pi\big|\le\sqrt{\pi^2+\log^2(a+1)}.
\]
Next, we find lower bounds for the terms in the denominator of $h(r,\theta)$. Let $y=re^{i\theta}$. Note that the quadratic terms
\[
y^2+3ay+3a^2,
\]
and
\[
y^2+(2a+b)y+a^2+ab+b^2,
\]
have roots of absolute value $\sqrt3a$ and $\sqrt{a^2+ab+b^2}$ respectively. Choose a positive real number $r_0>0$ such that 
\[
r_0<\min\{\sqrt3a,\sqrt{a^2+ab+b^2}\}.
\]
Therefore, if $0<r<r_0$, and $0<\theta<\pi$, then for $y=re^{i\theta}$, we have
\[
|y^2+3ay+3a^2|\ge|\sqrt3a-r_0|^2>0,
\]
and
\[
|y^2+(2a+b)y+a^2+ab+b^2|\ge|\sqrt{a^2+ab+b^2}-r_0|^2>0,
\]
giving us the required lower bounds. Consequently, we can write
\[
|h(r,\theta)|\le\frac{(2a+1)(a+1)\sqrt{\pi^2+\log^2(a+1)}}{(\sqrt3a-r_0)^2(\sqrt{a^2+ab+b^2}-r_0)^2},
\]
for $0<r<\min\{1,r_0\}$ and $0<\theta<\pi$. We have bounded the integrand in \eqref{cont:limit} by a constant function, independent of $r$ and $\theta$. Thus, using the dominated convergence theorem, we may bring the limit inside the integral in \eqref{cont:limit} to obtain
\begin{align}
&\lim_{r\to0^+}\int_{\pi^-}^{0^+}\frac{(a+re^{i\theta})(2a+re^{i\theta})\log^{k+1}(a+re^{i\theta})}{\big(r^2e^{2i\theta}+3are^{i\theta}+3a^2\big)\big(r^2e^{2i\theta}+(2a+b)re^{i\theta}+a^2+ab+b^2\big)}\cdot i\dd\theta\nonumber\\
&\qquad\qquad=\int_\pi^0\lim_{r\to0^+}\frac{(a+re^{i\theta})(2a+re^{i\theta})\log^{k+1}(a+re^{i\theta})}{\big(r^2e^{2i\theta}+3are^{i\theta}+3a^2\big)\big(r^2e^{2i\theta}+(2a+b)re^{i\theta}+a^2+ab+b^2\big)}\cdot i\dd\theta\nonumber\\
&\qquad\qquad=\int_\pi^0\frac{(a)(2a)\log^{k+1}(a)}{(3a^2)(a^2+ab+b^2)}\cdot i\dd\theta\nonumber\\
\label{cont:C1}&\qquad\qquad=-2\pi i\frac{\log^{k+1}a}{3(a^2+ab+b^2)}.
\end{align} 
A similar argument will show that the contribution of integral \eqref{cont:3} along $C_2$ gives
\begin{equation}\label{cont:C2}
-2\pi i\frac{(\log a+2\pi i)^{k+1}}{3(a^2+ab+b^2)}.
\end{equation}
The contribution towards the imaginary terms of the sum of \eqref{cont:C1} and \eqref{cont:C2} is
\[
\frac{-2\pi i}{3(a^2+ab+b^2)}\left[ \log^{k+1}a+\sum_{j=0,\text{ even}}^{k+1}(-1)^{\frac j2}{{k+1}\choose{j}}(2\pi)^j\log^{k+1-j}a \right].
\]
The integrals along $C_R$ and $C_\varepsilon$ will be zero and the purely imaginary terms corresponding to the remaining paths $\gamma_1,\gamma_2,\gamma_3,\gamma_4,$ gives an expression similar to \eqref{eq:intk}, but with $g_1(k)$:
\begin{equation}\label{eq:recg1}
-3i\theta(k+1)g_1(k)-\sum_{j>1,\text{ odd}}^{k+1}{{k+1}\choose j}(3\theta)^j\,i^j\,g_1(k+1-j).
\end{equation}
The next step is calculating the residues which are obtained at $a\omega,a\omega^2,b\omega$ and $b\omega^2$ (denoted by black dots in Figure \ref{fig:contour3}.  The residue is given by
\begin{multline*}
\frac{2\pi i}{3(a^3-b^3)}\Biggl[(a\omega^2-b\omega)(\log a+i\theta)^{k+1}-(a\omega-b\omega^2)(\log a+2i\theta)^{k+1}\\
+ (b\omega+a)(\omega^2-1)(\log b+i\theta)^{k+1}+(b\omega^2+a)(\omega-1)(\log b+2i\theta)^{k+1}\Biggr].
\end{multline*}
Collecting the imaginary terms together, we obtain from above
\begin{multline}\label{eq:resg1}
\frac{2\pi i}{6(a^3-b^3)}\Biggl[ (a-b)\sum_{j=0,\text{ even}}^{k+1}(-1)^{\frac j2}\theta^j{{k+1}\choose{j}}(2^j+1)\Bigl(\log^{k+1-j}a-3\log^{k+1-j}b\Bigr)\\
+\sqrt3(a+b)\sum_{j=1,\text{ odd}}^{k+1}(-1)^{\frac{j-1}2}\theta^j{{k+1}\choose{j}}(2^j-1)\Bigl(\log^{k+1-j}a-\log^{k+1-j}b\Bigr)\Biggr].
\end{multline}
Finally, equating \eqref{eq:recg1} and \eqref{eq:resg1} and rearranging, we have
\begin{multline}\label{eq:g2sum}
g_1(k)=\frac1{k+1}\sum_{j>1,\text{ odd}}^{k+1}(-1)^{\frac{j+1}2}{{k+1}\choose j}(3\theta)^{j-1}g_1(k+1-j)\\
-\frac{1}{6(k+1)(a^2+ab+b^2)}\left[\sum_{j=0,\text{ even}}^{k+1}(-1)^{\frac j2}\theta^j{{k+1}\choose{j}}(2^j+1)(\log^{k+1-j}a-3\log^{k+1-j}b)\right]\\
-\frac{a+b}{2\sqrt3(k+1)(a^3-b^3)}\left[\sum_{j=1,\text{ odd}}^{k+1}(-1)^{\frac{j-1}2}\theta^j{{k+1}\choose{j}}(2^j-1)(\log^{k+1-j}a-\log^{k+1-j}b)\right]\\
-\frac{1}{3(k+1)(a^2+ab+b^2)}\left[ \log^{k+1}a+\sum_{j=0,\text{ even}}^{k+1}(-1)^{\frac j2}{{k+1}\choose{j}}(2\pi)^j\log^{k+1-j}a \right].
\end{multline}
Note that this time in \eqref{eq:g2sum}, we have an additional sum along with the usual three parts. To incorporate this extra sum, we need to define another family of polynomials with rational coefficients:
\begin{multline*}
Y_k(x)=\frac1{k+1}\sum_{j>1,\text{ odd}}^{k+1}(-1)^{\frac{j+1}2}{{k+1}\choose j}3^{j-1}Y_{k+1-j}(x)\\
-\frac{1}{(k+1)}\left[x^{k+1}+\sum_{j=0,\text{ even}}^{k+1}(-1)^{\frac{j}2}{{k+1}\choose{j}}3^jx^{k+1-j}\right].
\end{multline*}
Here are the first few examples of the polynomial $Y_k(x)$:
\renewcommand{\arraystretch}{2.5}
\renewcommand{\tabcolsep}{0.5cm}
\[\begin{array}{|c|}
\hline 
Y_k(x)\\
\hline
\displaystyle Y_0(x)=-2x \\
\hline
\displaystyle Y_1(x)=-x^2+\frac{9}2\\
\hline
\displaystyle Y_2(x)=-\frac{2x^3}3+3x \\
\hline
\displaystyle Y_3(x)=-\frac{x^4}2+\frac{9x^2}2+\frac{81}4.\\
\hline
\end{array}\]
This means, using the definitions of the polynomials $R_k$ in \eqref{eq:rk} and $S_k$ in \eqref{eq:sk}, we can write
\begin{multline}\label{eq:gk}
g_1(k)=\frac{\theta^{k+1}}{3(a^2+ab+b^2)}\left[-R_k\left(\frac{\log a}{\theta}\right)+3R_k\left(\frac{\log b}{\theta}\right)+Y_k\left(\frac{\log a}{\theta}\right)\right]\\+\frac{\theta^{k+1}(a+b)}{\sqrt3(a^3-b^3)}\left[S_k\left(\frac{\log a}{\theta}\right)-S_k\left(\frac{\log b}{\theta}\right)\right].\\
\end{multline}
In our calculations in Section \ref{sec:compute}, $a$ will invariably be equal to 1, so that all the $\log a$ terms above vanish. \\
\subsection{The integral $g_2(k)$}
\label{sec:g2}
The final integral we will consider is
\[
g_2(k)=\int_0^\infty \frac{t(t-a)\log^k t}{(t^3+a^3)(t^2-bt+b^2)}\dd t,
\]
again with $a,b\in\R^+$, and the corresponding contour integral
\[
\oint_C \frac{z(z-a)\log^{k+1} z}{(z^3+a^3)(z^2-bz+b^2)} \dd z.
\]
In this case the integral along $C_1$ and $C_2$ is zero, however there is an additional residue contribution from the pole at $z=-a$.  The contour integral 
evaluates, as before, to
\begin{equation}\label{eq:recg2}
-3i\theta(k+1)g_2(k)-\sum_{j>1,\text{ odd}}^{k+1}{{k+1}\choose j}(3\theta)^j\,i^j\,g_2(k+1-j),
\end{equation}
and the residues at $z=-a\omega,-a\omega^2,-b\omega,-b\omega^2$ and $-a$ (denoted by red dots in Figure \ref{fig:contour3}) are given by
\begin{multline*}
\frac{2\pi i}{3(a^3-b^3)}\Biggl[-(\log a+5i\delta)^{k+1}(a\omega^2-b\omega)-(\log a+i\delta)^{k+1}(a\omega-b\omega^2)\\
+(\log b+5i\delta)^{k+1}(\omega^2-1)(b\omega+a)+(\log b+i\delta)^{k+1}(\omega-1)(b\omega^2+a)\Biggr]\\
+\frac{4\pi i(\log a+\pi i)^{k+1}}{3(a^2+ab+b^2)}.
\end{multline*}
Again,  we collect the purely imaginary terms in the above expression
\begin{multline}\label{eq:resg2}
\frac{2\pi i}{6(a^3-b^3)}\Biggl[(a-b)\sum_{j=0,\text{ even}}^{k+1}{{k+1}\choose{j}}i^{j}\delta^j(5^j+1)(\log^{k+1-j}a-3\log^{k+1-j}b)\\
-\sqrt3(a+b)\sum_{j=1,\text{ odd}}^{k+1}{{k+1}\choose{j}}i^{j-1}\delta^j(5^j-1)(\log^{k+1-j}a-\log^{k+1-j}b) \Biggr]\\
+\frac{4\pi i}{3(a^2+ab+b^2)}\sum_{j=0,\text{ even}}^{k+1}{{k+1}\choose{j}}i^{j}\pi^j\log^{k+1-j}a.
\end{multline}
Therefore, by equating \eqref{eq:recg2} and \eqref{eq:resg2}, we get
\begin{multline}\label{eq:g2term}
g_2(k)=\frac1{k+1}\sum_{j>1,\text{ odd}}^{k+1}(-1)^{\frac{j+1}2}{{k+1}\choose j}(3\theta)^{j-1}g_2(k+1-j)\\
-\frac{1}{6(k+1)(a^2+ab+b^2)}\left[\sum_{j=0,\text{ even}}^{k+1}(-1)^{\frac j2}\delta^j{{k+1}\choose{j}}(5^j+1)(\log^{k+1-j}a-3\log^{k+1-j}b)\right]\\
+\frac{a+b}{2\sqrt3(k+1)(a^3-b^3)}\left[\sum_{j=1,\text{ odd}}^{k+1}(-1)^{\frac{j-1}2}\delta^j{{k+1}\choose{j}}(5^j-1)(\log^{k+1-j}a-\log^{k+1-j}b)\right]\\
-\frac{2}{3(k+1)(a^2+ab+b^2)}\left[\sum_{j=0,\text{ even}}^{k+1}(-1)^{\frac j2}{{k+1}\choose{j}}\pi^j\log^{k+1-j}a \right].
\end{multline}
Once again, owing to the presence of an additional sum in \eqref{eq:g2term},we define another family of polynomials with rational coefficients, defined by the recursion:
\begin{multline*}
Z_k(x)=\frac1{k+1}\sum_{j>1,\text{ odd}}^{k+1}(-1)^{\frac{j+1}2}{{k+1}\choose j}6^{j-1}Z_{k+1-j}(x)\\
-\frac{2}{(k+1)}\sum_{j=0,\text{ even}}^{k+1}(-1)^{\frac j2}{{k+1}\choose{j}}3^jx^{k+1-j}.
\end{multline*}
Here are the first few examples of the polynomial $Z_k(x)$:
\renewcommand{\arraystretch}{2.5}
\renewcommand{\tabcolsep}{0.5cm}
\[\begin{array}{|c|}
\hline 
Z_k(x)\\
\hline
\displaystyle Z_0(x)=-2x \\
\hline
\displaystyle Z_1(x)=-x^2+9\\
\hline
\displaystyle Z_2(x)=-\frac{2x^3}3-6x \\
\hline
\displaystyle Z_3(x)=-\frac{x^4}2-9x^2+\frac{567}4.\\
\hline
\end{array}\]
Finally, using this and the polynomials $P_k$ and $Q_k$ defined in \eqref{eq:pk} and \eqref{eq:qk} respectively, we have
\begin{multline}\label{eq:g2k}
g_2(k)=\frac{\delta^{k+1}}{3(a^2+ab+b^2)}\left[-P_k\left(\frac{\log a}{\delta}\right)+3P_k\left(\frac{\log b}{\delta}\right)+Z_k\left(\frac{\log a}{\delta}\right)\right]\\+\frac{\delta^{k+1}(a+b)}{\sqrt3(a^3-b^3)}\left[Q_k\left(\frac{\log a}{\delta}\right)-Q_k\left(\frac{\log b}{\delta}\right)\right].
\end{multline}
\subsection{Dependence on the sign of $a$ and $b$}\label{sec:signindep}
We end this section with a discussion on the sums of the integrals $f_1(k)$ and $f_2(k)$ as well as $g_1(k)$ and $g_2(k)$. We also investigate how these integrals depend on the sign of the parameters $a$ and $b$.\\

First, we note that the results for integrals $f_1(k)$ and $f_2(k)$ from equations \eqref{eq:fk} and \eqref{eq:f2k} respectively, only work when both the parameters $a$ and $b$ are positive. Consider the sum $f_1(k)+f_2(k)$ given by
\begin{align}\label{eq:sumf}
f_1(k)+f_2(k)&=\int_0^\infty\left(\frac{t\log^kt}{(t^2+at+a^2)(t^2+bt+b^2)}+\frac{t\log^kt}{(t^2-at+a^2)(t^2-bt+b^2)}\right)\dd t\\
&=\int_0^\infty\frac{2t\log^kt\cdot(t^4+(a^2+b^2+ab)t^2+a^2b^2)}{(t^2+at+a^2)(t^2-at+a^2)(t^2+bt+b^2)(t^2-bt+b^2)}\dd t.\nonumber
\end{align}
We already know this integral evaluates to the sum of the equations \eqref{eq:fk} and \eqref{eq:f2k} (corresponding to $f_1(k)$ and $f_2(k)$  respectively). However, we may also use the same procedure of computing a contour integral of the form
\[
\oint_C\frac{2z\log^{k+1}z(z^4+(a^2+b^2+ab)z^2+a^2b^2)}{(z^2+az+a^2)(z^2-az+a^2)(z^2+bz+b^2)(z^2-bz+b^2)}\dd z,
\]
with an appropriate keyhole contour $C$, as done in Sections \ref{sec:f1} and \ref{sec:f2} to compute the integrals $f_1(k)$ and $f_2(k)$. In this case, the residues are obtained at the poles of the integrand which will be located at $z=\pm a\omega,\pm a\omega^2,\pm b\omega,\pm b\omega^2$. We remark that the location of these poles does not change on changing the sign of $a$ or $b$ from positive to negative.  Therefore, the evaluation of the residues and the entire contour integration remains the same even if $a$ or $b$ is negative. This means that the expression for $f_1(k)+f_2(k)$ is the same if $a$ or $b$ is negative, with the understanding that $\log a$ and $\log b$ would be replaced by $\log|a|$ and $\log|b|$ respectively. A similar argument shows that the same is true for the expression for 
\begin{align}\label{eq:sumg}
g_1(k)+g_2(k)=\int_0^\infty\left( \frac{t(t+a)\log^k t}{(t^3-a^3)(t^2+bt+b^2)}+\frac{t(t-a)\log^k t}{(t^3+a^3)(t^2-bt+b^2)}\right)\dd t,
\end{align}
obtained by adding equations \eqref{eq:gk} and \eqref{eq:g2k} (corresponding to $g_1(k)$ and $g_2(k)$  respectively).\\

Thus, despite the results for $f_1(k),f_2(k),g_1(k)$ and $g_2(k)$ being individually valid only for $a,b>0$, the following expression for the sum $f_1(k)+f_2(k)$,
\begin{align}\label{eq:sumffinal}
f_1(k)+f_2(k)=&\frac{\theta^{k+1}(a+b)}{a^3-b^3}\Bigg[R_k\left(\frac{\log|a|}\theta\right)-R_k\left(\frac{\log|b|}\theta\right)\Bigg]\nonumber\\
&+\frac{\theta^{k+1}}{\sqrt3(a^2+ab+b^2)}\Bigg[S_k\left(\frac{\log|a|}\theta\right)+S_k\left(\frac{\log|b|}\theta\right)\Bigg]\nonumber\\
&+\frac{\delta^{k+1}(a+b)}{a^3-b^3}\Bigg[P_k\left(\frac{\log|a|}\delta\right)-P_k\left(\frac{\log|b|}\delta\right)\Bigg]\nonumber\\
&+\frac{\delta^{k+1}}{\sqrt3(a^2+ab+b^2)}\Bigg[Q_k\left(\frac{\log|a|}\delta\right)+Q_k\left(\frac{\log|b|}\delta\right)\Bigg],
\end{align}
is valid for $a,b\in\R^\times$, and so is the expression for the sum $g_1(k)+g_2(k)$,
\begin{align}\label{eq:sumgfinal}
g_1(k)+g_2(k)=&\frac{\theta^{k+1}}{3(a^2+ab+b^2)}\left[-R_k\left(\frac{\log|a|}{\theta}\right)+3R_k\left(\frac{\log |b|}{\theta}\right)+Y_k\left(\frac{\log |a|}{\theta}\right)\right]\nonumber\\
&+\frac{\theta^{k+1}(a+b)}{\sqrt3(a^3-b^3)}\left[S_k\left(\frac{\log |a|}{\theta}\right)-S_k\left(\frac{\log |b|}{\theta}\right)\right]\nonumber\\
&+\frac{\delta^{k+1}}{3(a^2+ab+b^2)}\left[-P_k\left(\frac{\log |a|}{\delta}\right)+3P_k\left(\frac{\log |b|}{\delta}\right)+Z_k\left(\frac{\log |a|}{\delta}\right)\right]\nonumber\\
&+\frac{\delta^{k+1}(a+b)}{\sqrt3(a^3-b^3)}\left[Q_k\left(\frac{\log |a|}{\delta}\right)-Q_k\left(\frac{\log |b|}{\delta}\right)\right],
\end{align}
where, as before, $\theta=2\pi/3$ and $\delta=\pi/3$. 
\subsection{An alternate method}\label{sec:alternat}
In this section, we provide an alternate method to calculate the integrals $f_1(k),f_2(k),g_1(k)$ and $g_2(k)$ evaluated in the previous sections. The advantage of the following method is that it sheds light on the explicit form of the polynomials $R_k,S_k,P_k,Q_k,Y_k$ and $Z_k$ and we obtain a non-recursive expression. This section serves as an analogue of \cite[Lemma 5.2]{LL} to Lalín's result in \cite{nvar}.
\subsubsection{The integral $f_1(k)$}\label{sec:altf1}
Consider the integral
\[
I_1(\beta)=\int_0^\infty \frac{x^\beta}{(x^2+ax+a^2)(x^2+bx+b^2)}\dd x,
\]
where $a,b>0$ are real numbers, and let $k$ be any non-negative integer. The value of the $k^{\text{th}}$-derivative $I_1^{(k)}(\beta)$,with respect to $\beta$, gives the value of the integral
\[
f_1(k)=\int_0^\infty\frac{t\log^kt}{(t^2+at+a^2)(t^2+bt+b^2)}\dd t,
\]
under the limit $\beta\to1$.\\

Integral $I_1(\beta)$ can be evaluated using residue theorems (see \cite[Chapter 4, Section 5.3.4]{ahlfors} or \cite[Remark 2, p.129]{keckic}). The residues are calculated at $x=\omega a,\ol\omega a,\omega b, \ol \omega b$, where $\omega=e^{2\pi i/3}$. For instance, the residue at $x=\omega a$ is given by
\[
\frac{(\omega a)^{\beta-1}}{(a-b)(\omega-\ol\omega)(\omega a-\ol\omega b)}.
\] 
Then, using residue theorems, $I_1(\beta)$ is given by the sum of residues multiplied by a factor of $\frac{2\pi i}{1-e^{2\pi i \beta}}$, which gives
\begin{equation}\label{residue}
I_1(\beta)=\frac{2\pi i}{1-e^{2\pi i \beta}}\cdot\frac{1}{(a-b)(\sqrt3i)}\cdot\left[\frac{(a^\beta-b^\beta)\omega^{\beta-2}(1-\omega^{\beta-2})-ab(a^{\beta-2}-b^{\beta-2})\omega^\beta(1-\omega^\beta)}{a^2+ab+b^2}\right].
\end{equation}
Using the factorization of $(1-x^3)$, we have
\[
1-e^{2\pi i \beta}=(1-\omega^\beta)(1-\omega^{\beta-2})(1-\omega^{\beta+2}),
\]
and using this to simplify \eqref{residue}, we get
\[
I_1(\beta)=\frac{2\pi}{\sqrt3(a^3-b^3)}\left[ \frac{(a^\beta-b^\beta)\omega^{\beta-2}}{(1-\omega^\beta)(1-\omega^{\beta+2})}-\frac{ab(a^{\beta-2}-b^{\beta-2})\omega^\beta}{(1-\omega^{\beta-2})(1-\omega^{\beta+2})}  \right].
\]
We may rewrite this as follows:
\begin{align*}
\frac{(a^\beta-b^\beta)\omega^{\beta-2}}{(1-\omega^\beta)(1-\omega^{\beta+2})}&=\frac{a^\beta-b^\beta}{(\omega^{-\beta/2}-\omega^{\beta/2})(\omega^{-\beta/2-1}-\omega^{\beta/2+1})}\\
&=\frac{a^\beta-b^\beta}{-4\sin(\theta{\frac\beta2})\sin(\theta({\frac\beta2+1}))}\\
&=\frac{b^\beta-a^\beta}{4}\csc\left(\theta{\frac\beta2}\right)\csc\left(\theta\frac\beta2+\theta\right)\\
&=\frac{a^\beta-b^\beta}{4}\csc\left(\frac\theta2(\beta-1)+\frac\theta2\right)\csc\left(\frac\theta2(\beta-1)\right),
\end{align*}
where $\theta=2\pi/3$, and we have used $\csc\left(\theta\frac\beta2+\theta\right)=-\csc\left(\frac\theta2(\beta-1)\right)$. Similarly,
\begin{align*}
\frac{ab(a^{\beta-2}-b^{\beta-2})\omega^\beta}{(1-\omega^{\beta-2})(1-\omega^{\beta+2})}&=\frac{ab(a^{\beta-2}-b^{\beta-2})}{(\omega^{-\beta/2+1}-\omega^{\beta/2-1})(\omega^{-\beta/2-1}-\omega^{\beta/2+1})}\\
&=\frac{ab(a^{\beta-2}-b^{\beta-2})}{-4\sin(\theta(\frac\beta2-1))\sin(\theta(\frac\beta2+1))}\\
&=\frac{ba^{\beta-1}-ab^{\beta-1}}{-4}\csc\left(\theta\frac\beta2-\theta\right)\csc\left(\theta\frac\beta2+\theta\right)\\
&=\frac{ba^{\beta-1}-ab^{\beta-1}}{4}\csc\left(\frac\theta2(\beta-1)-\frac\theta2\right)\csc\left(\frac\theta2(\beta-1)\right).
\end{align*}
Thus,
\begin{multline}\label{eq:I1}
I_1(\beta)=\frac{\pi}{2\sqrt3(a^3-b^3)}\left[(a^\beta-b^\beta)\csc\left(\frac\theta2(\beta-1)+\frac\theta2\right)\csc\left(\frac\theta2(\beta-1)\right)\right.\\
\left.-(ba^{\beta-1}-ab^{\beta-1})\csc\left(\frac\theta2(\beta-1)-\frac\theta2\right)\csc\left(\frac\theta2(\beta-1)\right)\right].
\end{multline}
We wish to write the power series expansion of $I_1(\beta)$ in a neighbourhood of $\beta=1$. Note that the function $\csc\left(\frac\theta2(\beta-1)\right)$ has a simple pole at $\beta=1$, while all the other functions are holomorphic in a neighbourhood of $\beta=1$. Also observe that if we write the Laurent series expansion around $\beta=1$,
\begin{equation}\label{expansion:1}
\csc\left(\frac\theta2(\beta-1)-\frac\theta2\right)\csc\left(\frac\theta2(\beta-1)\right)=\sum_{j=-1}^\infty v_j\theta^j(\beta-1)^j,
\end{equation}
with $v_j\in\R$, then we have
\[
\csc\left(\frac\theta2(\beta-1)+\frac\theta2\right)\csc\left(\frac\theta2(\beta-1)\right)=\sum_{j=-1}^\infty (-1)^jv_j\theta^j(\beta-1)^j.
\]
Now, the power series expansion of $a^{\beta-1}$ is given by
\[
a^{\beta-1}=\sum_{j=0}^\infty\frac{\log^j a}{j!}(\beta-1)^j.
\]
This means
\[
a^{\beta-1}\cdot\csc\left(\frac\theta2(\beta-1)-\frac\theta2\right)\csc\left(\frac\theta2(\beta-1)\right)=\sum_{k=-1}^\infty \left(\sum_{j=0}^{k+1}\frac{\log^j a}{j!}\,v_{k-j}\theta^{k-j}\right) (\beta-1)^k.
\]
Using this idea, we may write 
\begin{equation*}
I_1(\beta)=\frac{\pi}{2\sqrt3(a^3-b^3)}\sum_{k=-1}^\infty w_k(\beta-1)^k,
\end{equation*}
where
\[
w_k=\sum_{j=0}^{k+1}\frac{(-1)^{k-j}(a\log^ja-b\log^jb)-b\log^ja+a\log^jb}{j!}v_{k-j}\theta^{k-j}.
\]
We note that $w_{-1}=0$, which means $I_1(\beta)$ is a holomorphic function around $\beta=1$. For $k\ge0$, $w_k$ represents the $k^{\text{th}}$-derivative of $I_1(\beta)$ at $\beta=1$, which tells us that
\[
f_1(k)=I_1^{(k)}(1)=\int_0^\infty\frac{t\log^kt}{(t^2+at+a^2)(t^2+bt+b^2)}\dd t=\frac{\pi(k!)}{2\sqrt3(a^3-b^3)}w_k.
\]
We can rewrite $w_k$ as follows:
\begin{align*}
w_k &=\sum_{j=0}^{k+1}\frac{(-1)^{k-j}(a\log^ja-b\log^jb)-b\log^ja+a\log^jb}{j!}v_{k-j}\theta^{k-j}\\
       &=\theta^k\sum_{j=0}^{k+1}\left[(a-(-1)^{k-j}b)\left(\frac{\log b}{\theta}\right)^j-(b-(-1)^{k-j}a)\left(\frac{\log a}{\theta}\right)^j\right]\frac{v_{k-j}}{j!}.
\end{align*}
Suppose $k=2n$ is even. Then, splitting the cases where $j$ is even or odd, we obtain
\begin{multline*}
w_{2n}=\theta^{2n}(a-b)\sum_{j=0, \text{ even}}^{2n+1}\left[\left(\frac{\log b}{\theta}\right)^j+\left(\frac{\log a}{\theta}\right)^j\right]\frac{v_{2n-j}}{j!}\\
+\theta^{2n}(a+b)\sum_{j=1, \text{ odd}}^{2n+1}\left[\left(\frac{\log b}{\theta}\right)^j-\left(\frac{\log a}{\theta}\right)^j\right]\frac{v_{2n-j}}{j!}.
\end{multline*}
Recall that the coefficients $v_j$ are obtained via the power series expansion \eqref{expansion:1}, and moreover, for any real number $x$, the hyperbolic functions $\sinh(x(\beta-1))$ and $\cosh(x(\beta-1))$ are given by
\[
\sinh(x(\beta-1))=\sum_{j=1,\text{ odd}}^\infty\frac{x^j}{j!}(\beta-1)^j\qquad\text{and}\qquad\cosh(x(\beta-1))=\sum_{j=0,\text{ even}}^\infty\frac{x^j}{j!}(\beta-1)^j,
\]
around $\beta=1$.
This means that the sums
\[
 \sum_{j=1, \text{ odd}}^{2n+1}v_{2n-j}\cdot\frac{x^j}{j!}\qquad\text{and}\qquad\sum_{j=0, \text{ even}}^{2n+1}v_{2n-j}\cdot \frac{x^j}{j!}.
\]
represent the $(2n)^{\text{th}}$-coefficients of 
\[
\csc\left(\frac T2-\frac\theta2\right)\csc\left(\frac T2\right)\sinh(xT)\qquad\text{and}\qquad\csc\left(\frac T2-\frac\theta2\right)\csc\left(\frac T2\right)\cosh(xT),
\]
respectively. We write
\begin{equation}\label{eq:defnAm}
\csc\left(\frac T2-\frac\theta2\right)\csc\left(\frac T2\right)\sinh(xT)=\sum_{m=-1}^\infty A_m(x)\frac{T^m}{m!},
\end{equation}
and\begin{equation}\label{eq:defnBm}
\csc\left(\frac T2-\frac\theta2\right)\csc\left(\frac T2\right)\cosh(xT)=\sum_{m=-1}^\infty B_m(x)\frac{T^m}{m!},
\end{equation}

where $A_m(x)$ and $B_m(x)$ are polynomials in $x$ with real coefficients. Then,
\begin{align}
f_1(2n)&=\frac{\pi(2n)!}{2\sqrt3(a^3-b^3)}w_{2n}\nonumber\\
&=\label{eq:f1even}\frac{\theta^{2n+1}(a+b)}{a^3-b^3}\cdot\frac{\sqrt3}4\left[-A_{2n}\left(\frac{\log a}\theta\right)+A_{2n}\left(\frac{\log b}\theta\right)\right]\\
&\qquad\qquad+\frac{\theta^{2n+1}}{\sqrt3(a^2+ab+b^2)}\cdot\frac{3}4\left[B_{2n}\left(\frac{\log b}\theta\right)+B_{2n}\left(\frac{\log a}\theta\right)\right]\nonumber
\end{align}
Similarly, if $k=2n+1$ is odd, then 
\begin{multline*}
w_{2n+1}=\theta^{2n+1}(a+b)\sum_{j=0, \text{ even}}^{2n+2}\left[\left(\frac{\log b}{\theta}\right)^j-\left(\frac{\log a}{\theta}\right)^j\right]\frac{v_{2n+1-j}}{j!}\\
+\theta^{2n+1}(a-b)\sum_{j=1, \text{ odd}}^{2n+2}\left[\left(\frac{\log b}{\theta}\right)^j+\left(\frac{\log a}{\theta}\right)^j\right]\frac{v_{2n+1-j}}{j!},
\end{multline*}
and
\begin{multline}\label{eq:f1odd}
f_1(2n+1)=\frac{\theta^{2n+2}(a+b)}{a^3-b^3}\cdot\frac{\sqrt3}4\left[-B_{2n+1}\left(\frac{\log a}\theta\right)+B_{2n+1}\left(\frac{\log b}\theta\right)\right]\\
+\frac{\theta^{2n+2}}{\sqrt3(a^2+ab+b^2)}\cdot\frac{3}4\left[A_{2n+1}\left(\frac{\log b}\theta\right)+A_{2n+1}\left(\frac{\log a}\theta\right)\right].
\end{multline}

Recall that the polynomials $A_m(x)$ and $B_m(x)$ are obtained as coefficients of the functions $\csc\left(\frac T2-\frac\theta2\right)\csc\left(\frac T2\right)\sinh(xT)$ and $\csc\left(\frac T2-\frac\theta2\right)\csc\left(\frac T2\right)\cosh(xT)$, and can be computed explicitly using standard convolution techniques. We have the following Taylor series in the variable $T$ around the point $T=0$:
\begin{equation}\label{eq:sinh}
\sinh (xT)=\sum_{j=0}^\infty \frac{x^{2j+1}}{(2j+1)!}\;T^{2j+1},
\end{equation}
and
\begin{equation}\label{eq:csc}
\csc\left(\frac T2\right)=\sum_{k=0}^\infty(-1)^{k-1}2\left(1-\frac1{2^{2k-1}}\right)\frac{\mathscr{B}_{2k}}{(2k)!}\;T^{2k-1},
\end{equation}
both of which consist of odd powers of $T$. Here $\mathscr{B}_j$ denotes the $j^{\text{th}}$-Bernoulli number, defined as coefficients of the Taylor series
\begin{equation*}\frac{x}{e^{x}-1}=\sum_{j=0}^\infty \frac{\mathscr{B}_j x^j}{j!}.\end{equation*}

For $\csc\left(\frac T2-\frac\theta2\right)$, we may write the Taylor series as
\begin{equation}\label{eq:shiftedcsc}
\csc\left(\frac T2-\frac\theta2\right)=\sum_{n=0}^\infty \frac{\mu_n}{n!}\; T^n, 
\end{equation}
where $\mu_n$ is the $n^\text{th}$ derivative evaluated at zero. We use Faà di Bruno's formula to calculate the $n^\text{th}$ derivative. Recall that if $f(x)$ and $g(x)$ are functions for which all necessary derivatives defined, then the $n^\text{th}$ derivative of $f\circ g(x)=f(g(x))$ is given by
\begin{equation*}
\frac{\mathrm{d^n}}{\mathrm{dx^n}}\; f(g(x))=\sum_{\substack{\text{partitions}\\\text{of $n$}}}\frac{n!}{k_1!k_2!\cdots k_n!} f^{(k)}(g(x))\left(\frac{g'(x)}{1!}\right)^{k_1}\left(\frac{g''(x)}{2!}\right)^{k_2}\cdots\left(\frac{g^{(n)}(x)}{n!}\right)^{k_n},
\end{equation*}
where the sum runs over the $n$-tuples of non-negative integers $(k_1,k_2,\dots,k_n)$ satisfying $k_1+2k_2+3k_3+\cdots+nk_n=n$ with $k:=k_1+k_2+\cdots+k_n$. In other words, the sum is over all partitions of $n$ with $k_j$ denoting the number of blocks of length $j$. Additionally, we define the following sums:
\[
\epsilon:=\sum_{\substack{j=1\\\text{$j$ odd}}}^nk_j, \quad\text{and}\quad\xi:=\sum_{\substack{j=1\\\text{$j\equiv0,-1\pmod4$}}}^nk_j.
\]
Taking 
\[
f(x)=x^{-1}\quad\text{ and} \quad g(x)=\sin\left(\frac T2-\frac\theta2\right),
\]
we obtain 
\[
f(g(T))=\left[\sin\left(\frac T2-\frac\theta2\right)\right]^{-1}=\csc\left(\frac T2-\frac\theta2\right)=\sum_{n=0}^\infty \frac{\mu_n}{n!}\; T^n.
\]
Using Faà di Bruno's formula we can write the $n^\text{th}$ derivative $\mu_n$  as
\begin{align}
\mu_{n}&=\left.\frac{\mathrm{d^{n}}}{\mathrm{dT^{n}}}\; \left[\sin\left(\frac T2-\frac\theta2\right)\right]^{-1}\right|_{T=0}\nonumber\\
&=\sum_{\substack{\text{partitions}\\\text{of $n$} }}\frac{n!}{k_1!k_2!\cdots k_n!} \left[\frac{(-1)^kk!}{\sin^{k+1}\left(-\frac\theta2\right)}\right]\left(\frac{\frac12\cos\left(-\frac\theta2\right)}{1!}\right)^{k_1}\left(\frac{\frac{-1}4\sin\left(-\frac\theta2\right)}{2!}\right)^{k_2}\cdots\left(\frac{\frac{\mathrm{d^{n}}}{\mathrm{dT^{n}}}\left.\sin\left(\frac T2-\frac\theta2\right)\right|_{T=0}}{n!}\right)^{k_{n}}\nonumber\\
&=\sum_{\substack{\text{partitions}\\\text{of $n$} }}\frac{n!}{k_1!k_2!\cdots k_n!} \left[\frac{(-1)^kk!}{\left(-\frac{\sqrt3}2\right)^{k+1}}\right]\,\left(\frac{\frac12\cdot\frac12}{1!}\right)^{k_1}\left(\frac{\frac1{2^2}\cdot\frac{\sqrt3}2}{2!}\right)^{k_2}\left(\frac{-\frac1{2^3}\cdot\frac{1}2}{3!}\right)^{k_3}\left(\frac{-\frac1{2^4}\cdot\frac{\sqrt3}2}{4!}\right)^{k_4}\cdots\nonumber\\
&=\sum_{\substack{\text{partitions}\\\text{of $n$} }}\frac{n!}{k_1!k_2!\cdots k_n!} \left[\frac{(-1)^kk!}{\left(-\frac{\sqrt3}2\right)^{k+1}}\right]\,\frac{(-1)^{k_3+k_4+k_7+k_8+\cdots}}{2^{k_1+2k_2+\cdots+nk_{n}}}\left(\frac{\frac12}{1!}\right)^{k_1}\left(\frac{\frac{\sqrt3}2}{2!}\right)^{k_2}\left(\frac{\frac12}{3!}\right)^{k_3}\left(\frac{\frac{\sqrt3}2}{4!}\right)^{k_4}\cdots\nonumber\\
&=\sum_{\substack{\text{partitions}\\\text{of $n$} }}\frac{n!}{k_1!k_2!\cdots k_n!} \left[\frac{(-1)2^{k+1}k!}{(\sqrt3)^{k+1}}\right]\,\frac{(-1)^{\xi}}{2^{n}}\cdot\frac1{2^k}\cdot\frac{(\sqrt3)^{k_2+k_4+k_6+\cdots}}{(1!)^{k_1}(2!)^{k_2}\cdots(n!)^{k_{n}}}\nonumber\\
&\label{eq:defnmun}=-\frac{1}{2^{n-1}\sqrt3}\sum_{\substack{\text{partitions}\\\text{of $n$} }}\frac{n!\;k!}{k_1!k_2!\cdots k_n!} \left(\frac{(-1)^{\xi}}{(\sqrt3)^{\epsilon}}\right)\cdot\frac{1}{(1!)^{k_1}(2!)^{k_2}\cdots(n!)^{k_{n}}}.
\end{align}
We can now proceed to calculating the coefficient $A_m(x)$ of $T^m$ in equation \eqref{eq:defnAm} using a convolution of the coefficients from equations \eqref{eq:sinh}, \eqref{eq:csc} and \eqref{eq:shiftedcsc}. Note that the series for the functions $\sinh(xT)$ and $\csc\left(\frac T2\right)$ contribute coefficients associated to odd powers of $T$ to the convolution. The convolution gives
\begin{align}\label{eq:am_expl}
A_{m}(x)=m!\cdot\sum_{j=0}^{\floor*{\frac m2}}\left(\sum_{k=0}^{\floor*{\frac{m+1}2}} (-1)^{k-1}2\left(1-\frac1{2^{2k-1}}\right)\frac{\mathscr{B}_{2k}}{(2k)!} \cdot\frac{\mu_{m-2j-2k}}{(m-2j-2k)!} \right)\frac1{(2j+1)!}\;x^{2j+1},
\end{align}
where $\mu_i$ is given by equation \eqref{eq:defnmun}. Similarly, using the series expansion
\[
\cosh(xT)=\sum_{j=0}^\infty\frac{x^{2j}}{(2j)!}T^{2j},
\]
in equation \eqref{eq:defnBm}, we obtain
\begin{equation}\label{eq:bm_expl}
B_m(x)=m!\cdot\sum_{j=0}^{\floor*{\frac{m+1}2}}\left(\sum_{k=0}^{\floor*{\frac{m+1}2}} (-1)^{k-1}2\left(1-\frac1{2^{2k-1}}\right)\frac{\mathscr{B}_{2k}}{(2k)!} \cdot\frac{\mu_{m-2j-2k+1}}{(m-2j-2k+1)!} \right)\frac1{(2j)!}\;x^{2j}.
\end{equation}
Considering the expressions in equations \eqref{eq:defnmun}, \eqref{eq:am_expl} and \eqref{eq:bm_expl}, we can observe that the coefficients of polynomials $A_m(x)$ and $B_m(x)$ involve rational numbers multiplied by integer powers of $\sqrt3$. This shows that all coefficients of these polynomials lie in $\Q\cup\sqrt3\cdot\Q$. We list the first examples of these polynomials:
\renewcommand{\arraystretch}{2.5}
\renewcommand{\tabcolsep}{0.5cm}
\[\begin{array}{|c|c|}
\hline 
A_m(x) & B_k(x)\\
\hline
\displaystyle A_0(x)=-\frac{4x}{\sqrt3} &\displaystyle B_0(x)=-\frac{2}3\\
\hline
\displaystyle A_1(x)=-\frac{2x}3 &\displaystyle B_1(x)=-\frac{2x^2}{\sqrt3}-\frac1{\sqrt3}\\
\hline
\displaystyle A_2(x)=-\frac{4x^3}{3\sqrt3}-\frac{2x}{\sqrt3} &\displaystyle B_2(x)=-\frac{2x^2}3-\frac49.\\
\hline
\displaystyle A_3(x)=-\frac{2x^3}3-\frac{4x}3 &\displaystyle B_3(x)=-\frac{x^4}{\sqrt3}-\sqrt3x^2-\frac{13}{10\sqrt3}.\\
\hline
\end{array}\]
 We can compare the expressions for $f_1(k)$ in equations \eqref{eq:f1even} and \eqref{eq:f1odd} with equation \eqref{eq:fk} from Section \ref{sec:f1}. This lets us relate the polynomials $R_m(x)$ and $S_m(x)$ to $A_k(x)$ and $B_k(x)$. Indeed, we have 
\[
R_{2n}(x)\approx-\frac{\sqrt3}4A_{2n}(x)\qquad\text{and}\qquad R_{2n+1}(x)\approx-\frac{\sqrt3}4B_{2n+1}(x),
\]
\[
S_{2n}(x)=\frac{3}4B_{2n}(x)\qquad\text{and}\qquad S_{2n+1}(x)=\frac{3}4A_{2n+1}(x).
\]
Here the ``$\approx$'' symbol denotes the fact that the two polynomials are equal except, possibly, their constant terms. The constant terms may differ due to the combinatorics involved in writing certain trigonometric functions. However, note that in equation \eqref{eq:fk}, we have the term $R_m\left(\frac{\log a}\theta\right)-R_m\left(\frac{\log b}\theta\right)$, and the constant terms cancel each other. Thus, this difference in constant terms does not affect the final computation and we get an explicit non-recursive description of the polynomials $R_m(x)$ and $S_m(x)$ via the definitions of $A_m(x)$ and $B_m(x)$. 

\subsubsection{The integral $f_2(k)$}
Now consider the integral
\begin{equation*}
I_2(\beta)=\int_0^\infty \frac{x^\beta}{(x^2-ax+a^2)(x^2-bx+b^2)}\dd x,
\end{equation*}
with $a,b>0$. Following the same process as for $I_1(\beta)$, we can compute
\[
I_2(\beta)=\frac{2\pi}{\sqrt3(a^3-b^3)}\left[\frac{(a^\beta-b^\beta)\sin(\theta(\beta-2))-(ba^{\beta-1}-ab^{\beta-1})\sin(\theta\beta)}{\sin\pi\beta}\right].
\]
Then,
\[
f_2(k)=\int_0^\infty\frac{t\log^kt}{(t^2-at+a^2)(t^2-bt+b^2)}\dd t=I_2^{(k)}(1).
\]
We write
\begin{equation}\label{eq:defnCnD}
\begin{aligned}
\sin(2T-\theta)\csc(3T)\sinh(xT)&=\sum_{m=-1}^\infty C_m(x)\frac{T^m}{m!},\\
\sin(2T-\theta)\csc(3T)\cosh(xT)&=\sum_{m=-1}^\infty D_m(x)\frac{T^m}{m!}.
\end{aligned}
\end{equation}
Then, for $\delta=\theta/2=\pi/3$, we get
\begin{multline}\label{eq:f2even}
f_2(2n)=\frac{\delta^{2n+1}(a+b)}{a^3-b^3}\cdot2\sqrt3\left[-C_{2n}\left(\frac{\log a}{\delta}\right)+C_{2n}\left(\frac{\log b}{\delta}\right)\right]\\
+\frac{\delta^{2n+1}}{\sqrt{3}(a^2+ab+b^2)}\cdot6\left[-D_{2n}\left(\frac{\log a}{\delta}\right)-D_{2n}\left(\frac{\log b}{\delta}\right)\right],
\end{multline}
and
\begin{multline}\label{eq:f2odd}
f_2(2n+1)=\frac{\delta^{2n+2}(a+b)}{a^3-b^3}\cdot2\sqrt3\left[-D_{2n+1}\left(\frac{\log a}{\delta}\right)+D_{2n+1}\left(\frac{\log b}{\delta}\right)\right]\\
+\frac{\delta^{2n+2}}{\sqrt{3}(a^2+ab+b^2)}\cdot6\left[C_{2n+1}\left(\frac{\log a}{\delta}\right)+C_{2n+1}\left(\frac{\log b}{\delta}\right)\right].
\end{multline}
The polynomials $C_m(x)$ and $D_m(x)$ may be compared to the polynomials $P_k(x)$ and $Q_k(x)$ from equation \eqref{eq:f2k} in Section \ref{sec:f2} (keeping in mind that the constant terms may not match, but will ultimately cancel out in the computation). Moreover, using convolution techniques, one can obtain explicit expressions for polynomials $C_m(x)$ and $D_m(x)$, as done for $A_m(x)$ and $B_m(x)$ in equations \eqref{eq:am_expl} and  \eqref{eq:bm_expl}. Observe once again that the coefficients of $C_m(x)$ and $D_m(x)$ will involve rational numbers multiplied by integer powers of $\sqrt3$, since the computations contain powers of $\sin(-2\pi/3)=-\sqrt3/2$ and $\cos(-2\pi/3)=-1/2$.

\subsubsection{The integral $g_1(k)$}
Next, we have the integral
\begin{equation*}
I_3(\beta)=\int_0^\infty \frac{x^\beta(x+a)}{(x^3-a^3)(x^2+bx+b^2)}\dd x,
\end{equation*}
and $a,b>0$. Note that the integrand is not defined at the point $x=a$ which lies in the interval of integration. When we write $I_3(\beta)$ we will always mean the Cauchy principal value of this integral. Using certain modified residue theorems (see \cite[Chapter 5]{keckic} or \cite[Corollary 3]{lowell}), we may compute $I_3(\beta)$ as
\begin{multline*}
I_3(\beta)=\frac{2\pi}{3(a^3-b^3)}\csc(\pi(\beta-1))\Bigg[a^{\beta-1}\left(a\cos\left(\frac{\theta}2(\beta+1)\right)+b\cos\left(\frac{\theta}2\beta\right)\right)\\+\sqrt3b^{\beta-1}\left(a\sin\left(\frac{\theta}2\beta\right)-b\sin\left(\frac{\theta}2(\beta+1)\right)\right)\Bigg]
-\frac{2\pi a^{\beta-1}}{3(a^2+ab+b^2)}\cot(\pi\beta).
\end{multline*}
We let
\begin{equation}\label{eq:defnEnF}
\begin{aligned}
\csc\left(\frac{3T}2\right)\cos\left(\frac T2+\theta\right)\sinh(xT)&=\sum_{m=-1}^\infty E_m(x)\frac{T^m}{m!},\\
\csc\left(\frac{3T}2\right)\cos\left(\frac T2+\theta\right)\cosh(xT)&=\sum_{m=-1}^\infty F_m(x)\frac{T^m}{m!},\\
\cot\left(\frac{3T}2\right)e^{xT}&=\sum_{m=-1}^\infty G_m(x)\frac{T^m}{m!},\\
\csc\left(\frac{3T}2\right)\sin\left(\frac T2+\frac\theta2\right)\sinh(xT)&=\sum_{m=-1}^\infty K_m(x)\frac{T^m}{m!},\\
\csc\left(\frac{3T}2\right)\sin\left(\frac T2+\frac\theta2\right)\cosh(xT)&=\sum_{m=-1}^\infty L_m(x)\frac{T^m}{m!}.
\end{aligned}
\end{equation}
Then,
\begin{multline}\label{eq:g1even}
g_1(2n)=\frac{\theta^{2n+1}}{3(a^2+ab+b^2)}\cdot3\left[E_{2n}\left(\frac{\log a}{\theta}\right)+\sqrt3K_{2n}\left(\frac{\log b}{\theta}\right)-G_{2n}\left(\frac{\log a}{\theta}\right)\right]\\
+\frac{\theta^{2n+1}(a+b)}{\sqrt3(a^3-b^3)}\cdot\sqrt3\left[F_{2n}\left(\frac{\log a}{\theta}\right)+\sqrt3L_{2n}\left(\frac{\log b}{\theta}\right) \right],
\end{multline}
and
\begin{multline}\label{eq:g1odd}
g_1(2n+1)=\frac{\theta^{2n+2}}{3(a^2+ab+b^2)}\cdot3\left[F_{2n+1}\left(\frac{\log a}{\theta}\right)+\sqrt3L_{2n+1}\left(\frac{\log b}{\theta}\right)-G_{2n+1}\left(\frac{\log a}{\theta}\right)\right]\\
+\frac{\theta^{2n+1}(a+b)}{\sqrt3(a^3-b^3)}\cdot\sqrt3\left[E_{2n+1}\left(\frac{\log a}{\theta}\right)+\sqrt3K_{2n+1}\left(\frac{\log b}{\theta}\right) \right].
\end{multline}
The polynomial $G_k(x)$ may be compared to the polynomial $Y_k(x)$ in equation \eqref{eq:gk} from Section \ref{sec:g1}. Using similar convolution techniques as done for $A_m(x)$ and $B_m(x)$, explicit expressions can be obtained for the polynomials $E_m(x),F_m(x),G_m(x),K_m(x)$ and $L_m(x)$. Again, the coefficients involved will consist of rationals multiplied by $\sqrt3$, since the trigonometric functions evaluate yield $\sqrt3$. 
\subsubsection{The integral $g_2(k)$}
Finally, we consider  the integral
\begin{equation*}
I_4(\beta)=\int_0^\infty \frac{x^\beta(x-a)}{(x^3+a^3)(x^2-bx+b^2)}\dd x,
\end{equation*}
with $a,b>0$. Again, residue theorems will give 
\begin{multline*}
I_4(\beta)=\frac{2\pi}{3(a^3-b^3)}\csc(\pi(\beta-1))\left[\sqrt3b^{\beta-1}(b\sin\theta(\beta+1)+a\sin\theta\beta)\right.\\
\left.-a^{\beta-1}(b\cos\theta\beta-a\cos\theta(\beta+1))-a^{\beta-1}(a-b)\right].
\end{multline*}
This time, we let
\begin{equation}\label{eq:defnUnV}
\begin{aligned}
\csc\left(3T\right)\cos\left(2T-2\delta\right)\sinh(xT)&=\sum_{m=-1}^\infty U_m(x)\frac{T^m}{m!},\\
\csc\left(3T\right)\cos\left(2T-2\delta\right)\cosh(xT)&=\sum_{m=-1}^\infty V_m(x)\frac{T^m}{m!},\\
\csc(3T)e^{xT}&=\sum_{m=-1}^\infty W_m(x)\frac{T^m}{m!},\\
\csc\left(3T\right)\sin\left(2T-2\delta\right)\sinh(xT)&=\sum_{m=-1}^\infty N_m(x)\frac{T^m}{m!},\\
\csc\left(3T\right)\sin\left(2T-2\delta\right)\cosh(xT)&=\sum_{m=-1}^\infty O_m(x)\frac{T^m}{m!}.\\
\end{aligned}
\end{equation}
Then,
\begin{multline}\label{eq:g2even}
g_2(2n)=\frac{\delta^{2n+1}}{3(a^2+ab+b^2)}\cdot6\left[-\sqrt3N_{2n}\left(\frac{\log b}{\delta}\right)+U_{2n}\left(\frac{\log a}{\delta}\right)-W_{2n}\left(\frac{\log a}{\delta}\right)\right]\\
+\frac{\delta^{2n+1}(a+b)}{\sqrt3(a^3-b^3)}\cdot2\sqrt3\left[\sqrt3O_{2n}\left(\frac{\log b}{\delta}\right)+V_{2n}\left(\frac{\log a}{\delta}\right)\right],
\end{multline}
and
\begin{multline}\label{eq:g2odd}
g_2(2n+1)=\frac{\delta^{2n+2}}{3(a^2+ab+b^2)}\cdot6\left[-\sqrt3O_{2n+1}\left(\frac{\log b}{\delta}\right)+V_{2n+1}\left(\frac{\log a}{\delta}\right)-W_{2n+1}\left(\frac{\log a}{\delta}\right)\right]\\
+\frac{\delta^{2n+2}(a+b)}{\sqrt3(a^3-b^3)}\cdot2\sqrt3\left[\sqrt3N_{2n+1}\left(\frac{\log b}{\delta}\right)+U_{2n+1}\left(\frac{\log a}{\delta}\right)\right].
\end{multline}
Once again, we can compare the polynomial $W_k(x)$ to $Z_k(x)$ in equation \eqref{eq:g2k} from Section \ref{sec:g2}. Similar to equations \eqref{eq:am_expl} and \eqref{eq:bm_expl}, explicit expressions for polynomials $U_m(x),V_m(x),W_m(x),N_m(x)$ and $O_m(x)$ can be obtained involving coefficients that are rational multiples of integer powers of $\sqrt3$.
\subsubsection{Summary}
We summarize the results of the above sub-sections in the proposition below:
\begin{prop}\label{prop:summary}
For a non-negative integer $m$ and real numbers $a,b>0$, let $f_1(m),f_2(m),g_1(m)$ and $g_2(m)$ denote the integrals
\[
f_1(m)=\int_0^\infty\frac{t\log^mt}{(t^2+at+a^2)(t^2+bt+b^2)}\dd t\quad\text{and}\quad f_2(m)=\int_0^\infty\frac{t\log^mt}{(t^2-at-a^2)(t^2+bt+b^2)}\dd t,
\]
\[
g_1(m)=\int_0^\infty \frac{t(t+a)\log^m t}{(t^3-a^3)(t^2+bt+b^2)}\dd t \qquad\text{and}\qquad g_2(m)=\int_0^\infty \frac{t(t-a)\log^m t}{(t^3+a^3)(t^2-bt+b^2)}\dd t.
\]
Then, there are two ways to evaluate the above integrals. The first is with the help of a family of polynomials defined recursively -- $R_m(x)$, $S_m(x)$, $P_m(x)$, $Q_m(x)$, $Y_m(x)$ and $Z_m(x)$. The second is using a family of polynomials that arise as coefficients of certain power series -- $A_m(x)$, $B_m(x)$, $C_m(x)$, $D_m(x)$, $E_m(x)$, $F_m(x)$, $G_m(x)$, $K_m(x)$, $L_m(x)$, $N_m(x)$ and $O_m(x)$. Explicit expressions for the latter family of polynomials may be obtained using convolution techniques, such as equations \eqref{eq:am_expl} and \eqref{eq:bm_expl} for $A_m(x)$ and $B_m(x)$ respectively. Thus, the polynomials in the first family can be expressed in terms of polynomials in the second family. Moreover, all these polynomials have coefficients that are rational multiples of integer powers of $\sqrt3$, i.e., they lie in $\Q\cup\sqrt3\cdot\Q$. The evaluations are tabulated below
\renewcommand{\arraystretch}{2.5}
\renewcommand{\tabcolsep}{0.5cm}
\[\begin{array}{|c|c|c|c|}
\hline 
\text{Integral} & \makecell{\text{Recursive}\\\text{expression}} & \makecell{\text{Alternate}\\ \text{expression}} & \text{Polynomials involved} \\
\hline
\displaystyle f_1(m) &\makecell{\eqref{eq:fk}, \text{using }\\R_m(x),S_m(x)} &\makecell{\eqref{eq:f1even},\eqref{eq:f1odd}} &\makecell{A_m(x)-\eqref{eq:am_expl},\\
														   B_m(x)-\eqref{eq:bm_expl},\\
														   \text{using } \eqref{eq:defnAm},\eqref{eq:defnBm}}\\
\hline														   
\displaystyle f_2(m) &\makecell{\eqref{eq:f2k},\text{ using}\\P_m(x),Q_m(x)} &\makecell{ \eqref{eq:f2even},\eqref{eq:f2odd}} &\makecell{C_m(x),D_m(x),\\
														   \text{using } \eqref{eq:defnCnD}}\\

\hline
\displaystyle g_1(m) & \makecell{\eqref{eq:gk},\text{using}\\R_m(x),S_m(x),Y_m(x)} &\makecell{\eqref{eq:g1even},\eqref{eq:g1odd}} &\makecell{E_m(x),F_m(x),G_m(x),\\
												K_m(x),L_m(x),\\
														   \text{using } \eqref{eq:defnEnF}}\\
\hline
\displaystyle g_2(m) & \makecell{\eqref{eq:g2k},\text{using}\\P_m(x),Q_m(x),Z_m(x)} &\makecell{ \eqref{eq:g2even},\eqref{eq:g2odd}} &\makecell{U_m(x),V_m(x),W_m(x),\\
												N_m(x),O_m(x),\\
														   \text{using } \eqref{eq:defnUnV}}\\
\hline
\end{array}\]
\end{prop}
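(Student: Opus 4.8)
The plan is to read this proposition as a consolidation of the four contour computations of Sections~\ref{sec:f1}--\ref{sec:g2} together with their Mellin-integral counterparts in Section~\ref{sec:alternat}, and then to supply the two assertions that are genuinely new: that the two families of polynomials encode the same integrals, and that every coefficient lies in $\Q\cup\sqrt3\cdot\Q$. The ``recursive expression'' column of the table is simply a restatement of the closed forms \eqref{eq:fk}, \eqref{eq:f2k}, \eqref{eq:gk}, \eqref{eq:g2k}, each produced by a keyhole contour whose only interior poles sit at the sixth-root-of-unity multiples of $a$ and $b$; the ``alternate expression'' column restates \eqref{eq:f1even}--\eqref{eq:g2odd}, obtained by recognizing $f_j(m)=I_j^{(m)}(1)$ and expanding the integrals $I_j(\beta)$ about $\beta=1$ (the holomorphy at $\beta=1$, already visible from $w_{-1}=0$, together with differentiation under the integral sign, legitimizes this step). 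So the first step is to cite these derivations verbatim; no new analysis is needed there.

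The second step is to justify that the recursive polynomials are determined by the power-series polynomials. Here I would note that, for fixed $m$, both expressions equal the \emph{same} real number $f_j(m)$ for all $a,b>0$. In each case the two formulas carry the same algebraic prefactor structure, namely a term in $\frac{a+b}{a^3-b^3}$ and a term in $\frac1{a^2+ab+b^2}$; since these prefactors are algebraically independent of the transcendental monomials $(\log a)^p(\log b)^q$, I can separate the two prefactor contributions and then match coefficients of each monomial in $\log a$ and $\log b$. Reading these off produces the stated identifications $R_{2n}\approx-\tfrac{\sqrt3}4A_{2n}$, $S_{2n}=\tfrac34B_{2n}$, and their analogues, where ``$\approx$'' records agreement up to the constant term only. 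That the constant term is invisible is structural: in every one of \eqref{eq:fk}--\eqref{eq:g2k} the $R,S,P,Q$-type polynomials enter through a difference $X_m\!\left(\tfrac{\log a}\theta\right)-X_m\!\left(\tfrac{\log b}\theta\right)$, or through a sum that is ultimately paired against such a difference in the Mahler measure computation, so any additive constant cancels.

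The third and most delicate step is the arithmetic claim. I would first reduce everything to the power-series family, since each recursive polynomial is obtained from members of that family by multiplication by the explicit constants $\pm\tfrac{\sqrt3}4,\tfrac34,2\sqrt3,6$, all of which already lie in $\Q\cup\sqrt3\cdot\Q$. For the power-series family, the generating functions in \eqref{eq:defnAm}--\eqref{eq:defnUnV} are products of three kinds of factors: $\sinh(xT)$ or $\cosh(xT)$, whose $T$-coefficients are rational in $x$; the unshifted functions $\csc(\tfrac T2),\csc(\tfrac{3T}2),\csc(3T),\cot(\tfrac{3T}2)$, whose coefficients are rational by the Bernoulli expansion \eqref{eq:csc}; and the shifted factors such as $\csc(\tfrac T2-\tfrac\theta2)$, whose coefficients $\mu_n$ are computed by Fa\`a di Bruno in \eqref{eq:defnmun}. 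The crux is a parity lemma: in \eqref{eq:defnmun} the exponent of $\sqrt3$ equals $-(1+\epsilon)$ with $\epsilon=\sum_{j\text{ odd}}k_j$, and since $n=\sum_j jk_j\equiv\sum_{j\text{ odd}}k_j=\epsilon\pmod2$ for \emph{every} partition of $n$, this exponent has a parity independent of the partition. Hence $\mu_n\in\sqrt3\cdot\Q$ when $n$ is even and $\mu_n\in\Q$ when $n$ is odd; that is, each $\mu_n$ is ``pure''. The remaining shifted factors ($\cos(\tfrac T2+\theta)$, $\sin(2T-\theta)$, and so on) exhibit the same dichotomy, because the only trigonometric values that arise are $\cos(\pm\tfrac\pi3)=\tfrac12$, $\sin(\pm\tfrac\pi3)=\pm\tfrac{\sqrt3}2$, $\cos(\pm\tfrac{2\pi}3)=-\tfrac12$ and $\sin(\pm\tfrac{2\pi}3)=\pm\tfrac{\sqrt3}2$.

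Finally, because $\sinh,\cosh$ and the unshifted cosecants each contribute $T$-powers of a single parity, the index $m-2j-2k$ (respectively $m-2j-2k+1$) of the $\mu$-coefficient entering a fixed coefficient of $A_m$ (respectively $B_m$) has a fixed parity; thus the convolutions \eqref{eq:am_expl}, \eqref{eq:bm_expl} combine a rational factor with $\mu$'s of a single $\sqrt3$-parity, and no coefficient ever mixes a rational and a $\sqrt3$-irrational contribution. The same reasoning applies to the $C,D,\dots,O$ families, giving the claim that all coefficients lie in $\Q\cup\sqrt3\cdot\Q$. The hard part here is not any single estimate but precisely this parity bookkeeping: one must check that across every convolution and every partition the power of $\sqrt3$ never changes parity, and that is exactly what the congruence $\epsilon\equiv n\pmod2$ guarantees.
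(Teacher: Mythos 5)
Your proposal is correct and follows essentially the same route as the paper: Proposition \ref{prop:summary} is established there simply by assembling the contour-integral derivations of the recursive expressions \eqref{eq:fk}, \eqref{eq:f2k}, \eqref{eq:gk}, \eqref{eq:g2k}, the Mellin-type derivations of \eqref{eq:f1even}--\eqref{eq:g2odd} via $f_j(m)=I_j^{(m)}(1)$, the coefficient comparison yielding $R_{2n}\approx-\tfrac{\sqrt3}{4}A_{2n}$, $S_{2n}=\tfrac34B_{2n}$, etc.\ (with the constant-term caveat you reproduce), and the observation from \eqref{eq:defnmun}, \eqref{eq:am_expl}, \eqref{eq:bm_expl} that every coefficient is a rational multiple of an integer power of $\sqrt3$. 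Your parity lemma --- that $\epsilon\equiv n\pmod2$ for every partition of $n$, so each $\mu_n$ is pure and the convolutions, whose $\mu$-indices all share one parity, never mix $\Q$ with $\sqrt3\cdot\Q$ --- is a sharper justification of the membership in $\Q\cup\sqrt3\cdot\Q$ than the paper's bare observation, but it formalizes the same argument rather than replacing it.
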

\begin{rem}
Note that an explicit expression for polynomials in the second family results in an alternate method to compute the coefficients  $a_{r,s}\,,b_{r,s}\,,c_{r,s},d_{r,s}$ appearing in Theorem \ref{thm:main}. In Section \ref{sec:altf1}, we obtain such explicit expressions \eqref{eq:am_expl} and \eqref{eq:bm_expl} for polynomials $A_m(x)$ and $B_m(x)$ respectively, using standard convolution techniques. These techniques can be applied to the remaining polynomials as well. 
\end{rem}

\section{Computing the Mahler measure}
\label{sec:compute}
In this section, we proceed to evaluate the Mahler measure of the polynomial $Q_n(z_1,\dots,z_n,y)$. In particular, we take
\[
Q_n(z_1,\dots,z_n,y)=y+\left(\frac{z_1+\omega^{m-2}}{z_1+1}\right)\cdots\left(\frac{z_n+\omega^{m-2}}{z_n+1}\right),
\]

where 
\[
\omega=e^{2\pi i/m}=\cos\frac{2\pi}m+i\sin\frac{2\pi}m
\]
is an $m^\text{th}$-root of unity. We will later set $m=3$, in which case $\omega$ is a third root of unity and the polynomials $Q_n$ are of the form
\[
Q_n(z_1,\dots,z_n,y)=y+\left(\frac{z_1+\omega}{z_1+1}\right)\cdots\left(\frac{z_n+\omega}{z_n+1}\right).
\]
 
Now, consider the rational function 
\[
G_n(z_1,\dots,z_n,y)=y+\left(\frac{\ol\omega z_1+\omega}{z_1+1}\right)\cdots\left(\frac{\ol\omega z_n+\omega}{z_n+1}\right),
\]
where $\ol\omega$ is the complex conjugate of $\omega$, which in this case is $\omega^{m-1}$. Making the transformation $z_j\to1/z_j$ for each variable $z_j$ in $G_n$, followed by the transformation $y\to\omega^ny$, shows that the Mahler measure of $G_n$ is exactly the same as that of $Q_n$, since these transformations do not affect the Mahler measure. Thus, we have
\[
\m(Q_n)=\m(G_n).
\]
We shall evaluate the Mahler measure of $Q_n$ by evaluating the Mahler measure of $G_n$.\\

Note that using Jensen's formula (see \cite[Proposition 1.4]{BrunaultZudilin}), the Mahler measure of the polynomial $P_\gamma(y)=\gamma+y$ is given by $\log^+|\gamma|:=\log\max\{1,|\gamma|\}$. Thus, we may write the integral defining the Mahler measure of $G_n$ as
\[
\m(Q_n)=\m(G_n)=\frac{1}{(2\pi)^n}\int_{-\pi}^{\pi}\cdots \int_{-\pi}^{\pi}\m\left(P_{\left(\frac{\ol\omega z_1+\omega}{z_1+1}\right)\cdots\left(\frac{\ol\omega z_n+\omega}{z_n+1}\right)}\right)\dd\theta_1\cdots\dd \theta_n,
\]
where $z_j=e^{i\theta_j}.$ Let
\begin{equation}\label{eq:subst}
\tan\frac{\theta_j}2=\frac{x_j-\cos\frac{2\pi}m}{\sin\frac{2\pi}m},
\end{equation}
so that 
\[
x_j=\frac{\ol\omega z_j+\omega}{z_j+1}.
\]
Differentiating equation \eqref{eq:subst} gives
\begin{align*}
\left(1+\tan^2\frac{\theta_j}2\right)\frac{\dd\theta_j}2&=\frac{\dd x_j}{\sin\frac{2\pi}m},\\
\Longrightarrow\dd {\theta_j}&=\frac{2\sin\frac{2\pi}m}{x_j^2-2\cos\frac{2\pi}mx_j+1}\dd x_j\\
&\qquad=\frac{2\sin\frac{2\pi}m}{(x_j-\omega)(x_j-\ol\omega)}\dd x_j.
\end{align*}
Thus, the Mahler measure of $Q_n$ is given by
\begin{equation*}
\m(Q_n)=\left(\frac{\sin\frac{2\pi}m}{\pi}\right)^n\int_{-\infty}^\infty\cdots \int_{-\infty}^\infty \m(P_{x_1\cdots x_n})\frac{\dd x_1}{(x_1-\omega)(x_1-\ol\omega)}\cdots \frac{\dd x_n}{(x_n-\omega)(x_n-\ol\omega)}.
\end{equation*}
In this discussion, we take $\omega$ to be the third root of unity, $e^{2\pi i/3}$. Taking $m=3$,  the above integral can be written as
\begin{equation*}
\m(Q_n)=\left(\frac{\sqrt3}{2\pi}\right)^n\int_{-\infty}^\infty\cdots \int_{-\infty}^\infty \m(P_{x_1\cdots x_n})\frac{\dd x_1}{(x_1^2+x_1+1)}\cdots \frac{\dd x_n}{(x_n^2+x_n+1)},
\end{equation*}
and we make the transformation ${y_1}=x_1,\;{y_2}=x_1x_2,\;\dots,{y_n}=x_1\cdots x_n$ to obtain
\begin{equation}\label{eq:mahlerint}
\m(Q_n)=\left(\frac{\sqrt3}{2\pi}\right)^n\int_{y_n}^*\cdots \int_{y_1}^* \m(P_{y_n})\frac{y_1\dd y_1}{(y_1^2+y_1+1)}\cdot \frac{y_2\dd y_2}{(y_2^2+y_2y_1+y_1^2)}\cdots \frac{\dd y_n}{(y_n^2+y_ny_{n-1}+y_{n-1}^2)}.
\end{equation}
The limits of integration for the variables $y_j$ have not been described above. We will discuss them in more detail in Section~\ref{sec:iteration}.  Throughout this article,  we will denote these limits using the symbol $\int_{y_j}^*$.\\
\subsection{The iteration step}
\label{sec:iteration}
Using the explicit formulae for the integrals considered in Section \ref{sec:generalintegrals}, we lay out an iterative procedure to calculate the Mahler measure $\m(Q_n)$. We rewrite equation \eqref{eq:mahlerint} as
\begin{equation}\label{eq:transfint}
\frac{\m(Q_n)}{\left({\sqrt3}/ {2\pi}\right)^n}=\int_{y_n}^*\cdots \int_{y_1}^* \m(P_{y_n})\frac{y_1\dd y_1}{(y_1^2+y_1+1)}\cdot \frac{y_2\dd y_2}{(y_2^2+y_2y_1+y_1^2)}\cdots \frac{\dd y_n}{(y_n^2+y_ny_{n-1}+y_{n-1}^2)},
\end{equation}
where $\m(P_x)=\log^+|x|$. We first wish to discuss the limits for each variable $y_k$ in integral \eqref{eq:transfint}. For $k\ge2$, since $y_k=x_ky_{k-1}$, we cannot specify the exact limits of $y_k$ without knowing the limits of the preceding variable $y_{k-1}$. However, we know $y_1=x_1$ varies from $-\infty$ to $\infty$, which will determine the limits of the  variable $y_2$, and in turn of $y_3,y_4$ and so on. Starting with $y_1$, we evaluate each integral with respect to the variable $y_k$, using the formulae obtained in Section \ref{sec:generalintegrals} at each step. We begin by writing the integral in \eqref{eq:transfint} as 
\begin{multline}\label{eq:basey}
\int_{y_n}^* \cdots \int_{y_2}^*\left(\int_{y_1=-\infty}^\infty \frac{y_1\dd y_1}{(y_1^2+y_1+1)(y_2^2+y_2y_1+y_1^2)}\right)\cdot \frac{y_2\dd y_2}{(y_2^2+y_2y_3+y_3^2)}\cdots\\ \cdots \frac{y_{n-1}\dd y_{n-1}}{(y_n^2+y_ny_{n-1}+y_{n-1}^2)}\cdot \m(P_{y_n})\dd y_n,
\end{multline}
and evaluating the first inner integral with respect to $y_1$. To do this, we split the integral $\int_{y_1=-\infty}^{\infty}$ into two integrals  $\int_{y_1=-\infty}^{0}$ and  $\int_{y_1=0}^{\infty}$, and use the results from Section \ref{sec:generalintegrals}. We will also need to know whether $y_2$ is positive or negative to be able to use the results from Section \ref{sec:generalintegrals}, that is, we need to determine the limits of $y_2$ in integral \eqref{eq:basey}. Recall that 
\[
y_{2}=x_{2}\cdot y_1,
\]
and both $x_{2}$ and $y_1$ vary from $-\infty$ to $\infty$. To determine the limits of $y_{2}$, we fix a value of $y_1$ in the given range and see how $y_{2}=x_{2}y_1$ varies as $x_{2}$ varies. Thus, the interval
\[
\int_{y_2}^*\int_{y_1=-\infty}^\infty
\]
in integral \eqref{eq:basey} can be split into the following four intervals:
\begin{enumerate}[label=(\Roman*)]
\item When $x_{2}\ge0$ and $y_{1}\ge0$: If $y_1=c\ge0$, then as $x_{2}$ varies from 0 to $\infty$, $y_2$ must also vary from $0$ to $\infty$. The limits in this case are
\begin{equation}\label{eq:inte1}
\int_{y_2=0}^\infty\; \left( \int_{y_1=0}^\infty\frac{y_1\dd y_1}{(y_1^2+y_1+1)(y_2^2+y_2y_1+y_1^2)}\right)\cdot \frac{y_2\dd y_2}{(y_2^2+y_2y_3+y_3^2)}.
\end{equation}

\item When $x_2\le0$ and $y_{1}\le0$: We fix $y_1=c\le0$, then as $x_2$ is negative and varies from $-\infty$ to $0$, $y_2=cx_2$ is positive and must vary from $\infty$ to $0$. Here, the limits are
\begin{align}
&\int_{y_2=\infty}^0\; \left( \int_{y_1=-\infty}^0\frac{y_1\dd y_1}{(y_1^2+y_1+1)(y_2^2+y_2y_1+y_1^2)}\right)\cdot \frac{y_2\dd y_2}{(y_2^2+y_2y_3+y_3^2)}\nonumber\\
\label{eq:inte2}=&\int_{y_2=0}^\infty\; \left( \int_{y_1=0}^\infty\frac{y_1\dd y_1}{(y_1^2-y_1+1)(y_2^2-y_2y_1+y_1^2)}\right)\cdot \frac{y_2\dd y_2}{(y_2^2+y_2y_3+y_3^2)},
\end{align}

\item When $x_2\le0$ and $y_{1}\ge0$. Here $y_1=c\ge0$, and $x_2$ varies from $-\infty$ to 0. This means that $y_2$ must  vary from $-\infty$ to 0 as well and the limits in this case are
\begin{equation}\label{eq:inte3}
\int_{y_2=-\infty}^0\; \left( \int_{y_1=0}^\infty\frac{y_1\dd y_1}{(y_1^2+y_1+1)(y_2^2+y_2y_1+y_1^2)}\right)\cdot \frac{y_2\dd y_2}{(y_2^2+y_2y_3+y_3^2)}.
\end{equation}
Note that compared to \eqref{eq:inte1}, in this case $y_2$ is negative. 
\item When $x_2\ge0$ and $y_{1}\le0$. Here, if $y_1=c\le0$ then as $x_2$ varies from 0 to $\infty$, we have $y_2$ to be negative varying from $0$ to $-\infty$. The limits are
\begin{align}
&\int_{y_2=0}^{-\infty}\;\left( \int_{y_1=-\infty}^0\frac{y_1\dd y_1}{(y_1^2+y_1+1)(y_2^2+y_2y_1+y_1^2)}\right)\cdot \frac{y_2\dd y_2}{(y_2^2+y_2y_3+y_3^2)}\nonumber\\
\label{eq:inte4}=&\int_{y_2=-\infty}^{0}\;\left( \int_{y_1=0}^\infty\frac{y_1\dd y_1}{(y_1^2-y_1+1)(y_2^2-y_2y_1+y_1^2)}\right)\cdot \frac{y_2\dd y_2}{(y_2^2+y_2y_3+y_3^2)},
\end{align}
again, after the transformation $(y_1\to-y_1)$ in the last line, and $y_2$ is negative, as opposed to \eqref{eq:inte2}.
\end{enumerate}
To sum up, the double integral
\begin{equation}\label{int:y1y2}
 \int_{y_2}^*\left(\int_{y_1=-\infty}^\infty \frac{y_1\dd y_1}{(y_1^2+y_1+1)(y_2^2+y_2y_1+y_1^2)}\right)\cdot \frac{y_2\dd y_2}{(y_2^2+y_2y_3+y_3^2)},
\end{equation}
appearing in integral \eqref{eq:basey} can be written as the sum of integrals \eqref{eq:inte1}, \eqref{eq:inte2}, \eqref{eq:inte3} and \eqref{eq:inte4}:
\begin{align}
\label{int:i1}\int_{y_2=0}^\infty\; \left( \int_{y_1=0}^\infty\frac{y_1\dd y_1}{(y_1^2+y_1+1)(y_2^2+y_2y_1+y_1^2)}\right)\cdot \frac{y_2\dd y_2}{(y_2^2+y_2y_3+y_3^2)}\\
\label{int:i2}+ \int_{y_2=0}^\infty\; \left( \int_{y_1=0}^\infty\frac{y_1\dd y_1}{(y_1^2-y_1+1)(y_2^2-y_2y_1+y_1^2)}\right)\cdot \frac{y_2\dd y_2}{(y_2^2+y_2y_3+y_3^2)}\\
\label{int:i3}+\int_{y_2=-\infty}^0 \left( \int_{y_1=0}^\infty\frac{y_1\dd y_1}{(y_1^2+y_1+1)(y_2^2+y_2y_1+y_1^2)}\right)\cdot \frac{y_2\dd y_2}{(y_2^2+y_2y_3+y_3^2)}
 \\
\label{int:i4}+ \int_{y_2=-\infty}^{0}\;\left( \int_{y_1=0}^\infty\frac{y_1\dd y_1}{(y_1^2-y_1+1)(y_2^2-y_2y_1+y_1^2)}\right)\cdot \frac{y_2\dd y_2}{(y_2^2+y_2y_3+y_3^2)}.
\end{align}
We remark that after choosing the parameters $a=1$ and $b=y_2$, one may use the expression for $f_1(0)$ from equation \eqref{eq:fk} in Section \ref{sec:f1} to evaluate the inner integral in line \eqref{int:i1} with respect to the variable $y_1$, and that for $f_2(0)$ from equation \eqref{eq:f2k} in Section \ref{sec:f2} to evaluate the inner integral in \eqref{int:i2}. However, the same cannot be used for the integrals in lines \eqref{int:i3} and \eqref{int:i4}, despite having identical integrands as lines \eqref{int:i1} and \eqref{int:i2}. This is so because $b=y_2$ is negative in this case, and equations  \eqref{eq:fk} and  \eqref{eq:f2k} are valid only when $a,b>0$. We can nonetheless combine integrals \eqref{int:i1} and \eqref{int:i2} together, since they have identical limits, and also combine integrals \eqref{int:i3} and \eqref{int:i4} together, to get
\begin{align}
\label{int:y2pos}&\int_{y_2=0}^\infty\; \int_{y_1=0}^\infty \left(\frac{y_1}{(y_1^2+y_1+1)(y_2^2+y_2y_1+y_1^2)}+ \frac{y_1}{(y_1^2-y_1+1)(y_2^2-y_2y_1+y_1^2)}\right)\dd y_1\cdot \frac{y_2\dd y_2}{(y_2^2+y_2y_3+y_3^2)}\\
&\label{int:y2neg}+\int_{y_2=-\infty}^0   \int_{y_1=0}^\infty\left(\frac{y_1}{(y_1^2+y_1+1)(y_2^2+y_2y_1+y_1^2)}+ \frac{y_1}{(y_1^2-y_1+1)(y_2^2-y_2y_1+y_1^2)}\right)\dd y_1\cdot \frac{y_2\dd y_2}{(y_2^2+y_2y_3+y_3^2)}.
\end{align}
The integrands in lines \eqref{int:y2pos} and \eqref{int:y2neg} are identical and the only difference between these integrals is that $y_2$ is positive in the first and negative in the second. However, note that the inner integrals are both of the form
\[
f_1(0)+f_2(0)=\int_0^\infty\left(\frac{t}{(t^2+at+a^2)(t^2+bt+b^2)}+\frac{t}{(t^2-at+a^2)(t^2-bt+b^2)}\right)\dd t,
\]
with $a=1$ and $b=y_2$ and $t=y_1$. This is exactly equation \eqref{eq:sumf} from the discussion in Section \ref{sec:signindep}, and we know that it evaluates to equation \eqref{eq:sumffinal} and that it is valid for $a,b\in\R^\times$. This means that the evaluation of the inner integrals with respect to the variable $y_1$ in lines \eqref{int:y2pos} and \eqref{int:y2neg} leads to the same expression given by equation \eqref{eq:sumffinal}, with $a=1,b=y_2$ and $k=0$. Thus, we may further combine integrals \eqref{int:y2pos} and \eqref{int:y2neg} together, with the variable $y_2$ varying from $-\infty$ to $\infty$, to get
\begin{equation}\label{int:intermediate}
\int_{y_2=-\infty}^\infty\Big(f_1(0)+f_2(0)\Big)\cdot \frac{y_2\dd y_2}{(y_2^2+y_2y_3+y_3^2)}.
\end{equation}
Recall that from the definition of the polynomials $R_k(x)$ and $S_k(x)$ given by equations \eqref{eq:rk} and \eqref{eq:sk}, we have $R_0(x)=x$ and $S_0(x)=-1/2$, and from the definitions of polynomials $P_k(x)$ and $Q_k(x)$ given by equations \eqref{eq:pk} and \eqref{eq:qk}, we obtain $P_0(x)=x$ and $Q_0(x)=2$. Plugging in these values in equation \eqref{eq:sumffinal} gives $f_1(0)+f_2(0)$ and replacing this in integral \eqref{int:intermediate}, we may finally write the double integral \eqref{int:y1y2} as
\begin{align}\label{eq:step2}
&\int_ {y_2=-\infty}^\infty\left(\frac{2(y_2+1)(\log{|y_2|})}{y_2^3-1}+\frac{2\pi}{3\sqrt3(y_2^2+y_2+1)}\right)\frac{y_2\dd y_2}{(y_2^2+y_2y_3+y_3^2)}\nonumber\\
=&2\int_ {y_2=-\infty}^\infty\frac{y_2(y_2+1)(\log{|y_2|})}{(y_2^3-1)(y_2^2+y_2y_3+y_3^2)}\dd y_2+\frac{2\pi}{3\sqrt3}\int_{y_2=-\infty}^\infty\frac{y_2}{(y_2^2+y_2+1)(y_2^2+y_2y_3+y_3^2)}\dd y_2.
\end{align}
In this way, we have reduced one variable (namely $y_1$) from the $n$-fold integral in \eqref{eq:basey}. The next step is to repeat the above procedure be evaluating the integrals in \eqref{eq:step2} while keeping track of the limits of the variable $y_3$. In other words, we evaluate 
\begin{align}
\label{eq:step2_1} &\int_{y_3}^*2\left(\int_{y_2=-\infty}^\infty \frac{y_2(y_2+1)(\log{|y_2|})}{(y_2^3-1)(y_2^2+y_2y_3+y_3^2)}\dd y_2\right)\cdot \frac{y_3\dd y_3}{(y_3^2+y_4y_3+y_4^2)}\\
\label{eq:step2_2} &\qquad\qquad\qquad+ \int_{y_3}^*\frac{2\pi}{3\sqrt3}\left(\int_{y_2=-\infty}^\infty\frac{y_2}{(y_2^2+y_2+1)(y_2^2+y_2y_3+y_3^2)}\dd y_2\right)\cdot \frac{y_3\dd y_3}{(y_3^2+y_4y_3+y_4^2)}.
\end{align}
Recall that $y_3=x_3y_2$, and once again, we split the integrals in lines \eqref{eq:step2_1} and \eqref{eq:step2_2} into four integrals each based on the signs of $y_2$ and $y_3$, and perform the integrations with respect to the variable $y_2$. Note that each of these integrals has the form of the integrals discussed in Section \ref{sec:generalintegrals}. For example, the integrals that come from line \eqref{eq:step2_1} correspond to $g_1(1)$ and $g_2(1)$, as the case may be, from Sections \ref{sec:g1} and \ref{sec:g2} respectively. Similar to the previous case, the integrals in line \eqref{eq:step2_2} correspond to  $f_1(0)$ and $f_2(0)$, from Sections \ref{sec:f1} and \ref{sec:f2} respectively. Using the same arguments as done in the previous step, we may combine  appropriate integrals with varying limits of $y_3$ into one single integral where $y_3$ varies from $-\infty$ to $\infty$, from where the same process can be continued. Thus, at the $j^\mathrm{th}$-step, we have an inner-most integral with respect to the variable $y_j$, with limits from $-\infty$ to $\infty$, and sums of integrands of the form 
\begin{equation}\label{form:1}
\frac{y_{j}\log^m|y_j|}{(y_{j}^2+y_{j}+1)(y_{j}^2+y_{j+1}y_{j}+y_{j+1}^2)},
\end{equation}
or
\begin{equation}\label{form:2}
\frac{y_{j}(y_{j}+1)\log^m |y_{j}|}{(y_{j}^3-1)(y_{j}^2+y_{j}y_{j+1}+y_{j+1}^2)},
\end{equation}
for various values of non-negative integers $m\ge0$. Taking $y_{j+1}=x_{j+1}y_j$, we split each integral according to the signs of $y_{j}$ and  $y_{j+1}$. This leads to four intervals for each integral. Now, in the two intervals where $y_{j+1}$ is positive, that is, the region where it takes values between $0$ and $\infty$, the sum of the corresponding integrals will evaluate to equation \eqref{eq:sumffinal}
\[
f_1(m)+f_2(m),
\]
if the integrand is of the form \eqref{form:1}, or to equation \eqref{eq:sumgfinal}
\[
g_1(m)+g_2(m),
\]
if it is of the form \eqref{form:2}, with the parameters $a=1,b=y_{j+1}$ and $k=m$. In light of the discussions in Section \ref{sec:signindep}, the sum of the remaining two intervals, where $y_{j+1}$ is negative, will also have the same form. Thus, these four integrals can be combined together to a single integral with respect to the variable $y_{j+1}$ of the form
\[
\int_{y_{j+1}=-\infty}^\infty\Big(f_1(m)+f_2(m)\Big)\cdot \frac{y_{j+1}\dd y_{j+1}}{(y_{j+1}^2+y_{j+1}y_{j+2}+y_{j+2}^2)},
\]
or of the form
\[
\int_{y_{j+1}=-\infty}^\infty\Big(g_1(m)+g_2(m)\Big)\cdot \frac{y_{j+1}\dd y_{j+1}}{(y_{j+1}^2+y_{j+1}y_{j+2}+y_{j+2}^2)},
\]
as the case may be. As can be seen from the results for $f_1(m),f_2(m),g_1(m)$ and $g_2(m)$ from Section \ref{sec:generalintegrals}, this leads to a sum of integrals with respect to the variable $y_{j+1}$, all of which again have the form \eqref{form:1} or \eqref{form:2}. We may thus apply the above procedure iteratively up to the last variable $y_n$ and the limits of the integral will simply be $\int_{y_n=-\infty}^\infty$. More precisely, if we denote by $F(k)$ the following integral over $k$ variables,
\begin{equation}\label{eq:genfk}
F(k):=\int_{y_k}^*\cdots \int_{y_1}^* \m(P_{y_k})\frac{y_1\dd y_1}{(y_1^2+y_1+1)}\cdot \frac{y_2\dd y_2}{(y_2^2+y_2y_1+y_1^2)}\cdots \frac{\dd y_k}{(y_k^2+y_ky_{k-1}+y_{k-1}^2)},
\end{equation}
then, after the final iteration, we can write for $n\ge0$ and $k=2n+2$, 
\begin{multline}\label{eq:evencase}
F(2n+2)=\sum_{h=1}^{n+1} a_{n+1,h-1}\;\left(\frac\pi 3\right)^{2n+2-2h}\int_{-\infty}^\infty\m(P_{y_{k}})\frac{y_{k}+1}{y_{k}^3-1}\log^{2h-1}|y_{k}|\dd {y_k}\\
+\sum_{h=0}^{n} b_{n+1,h}\;\left(\frac\pi 3\right)^{2n+1-2h}\int_{-\infty}^\infty\m(P_{y_{k}})\frac{\log^{2h}|y_{k}|}{y_{k}^2+y_{k}+1} \dd {y_k},
\end{multline}
and for $n\ge0$ and $k=2n+1$, we write
\begin{multline}\label{eq:oddcase}
F(2n+1)=\sum_{h=1}^n c_{n,h-1}\;\left(\frac\pi 3\right)^{2n-2h+1}\int_{-\infty}^\infty\m(P_{y_{k}})\frac{y_{k}+1}{y_{k}^3-1}\log^{2h-1}|y_{k}|\dd {y_k}\\
+\sum_{h=0}^{n} d_{n,h}\;\left(\frac\pi 3\right)^{2n-2h}\int_{-\infty}^\infty\m(P_{y_{k}})\frac{\log^{2h}|y_{k}|}{y_{k}^2+y_{k}+1}\dd {y_k},
\end{multline}
where $a_{r,s}\,,b_{r,s}\,,c_{r,s},d_{r,s}$ are real numbers which will be defined recursively in Section ~\ref{sec:reccoeffs}. \\

\subsection{Expressing the integrals in terms of $L$-functions}\label{sec:polylogsaslvals}
Here we write the Mahler measure integrals obtained in equations \eqref{eq:evencase} and \eqref{eq:oddcase} in terms of polylogarithm values which can in turn be written as special values of the Riemann zeta function and the Dirichlet $L$-function $L(\chi_{-3},s)$. \\

Note that $\m(P_{y_k})=\log^+|y_k|$, and we can write
\[
\int_{-\infty}^\infty\m(P_{y_{k}})\frac{y_{k}+1}{y_{k}^3-1}\log^{2h-1}|y_{k}|\dd {y_k}=\int_0^1\log^{2h}t\,\frac{1+t}{1-t^3}\dd t +\int_0^1\log^{2h}t\,\frac{1-t}{1+t^3}\dd t,
\]
and
\[
\int_{-\infty}^\infty\m(P_{y_{k}})\frac{\log^{2h}|y_{k}|}{y_{k}^2+y_{k}+1}\dd {y_k}=-\int_0^1\frac{\log^{2h+1}t}{t^2+t+1}\dd t-\int_0^1\frac{\log^{2h+1}t}{t^2-t+1}\dd t.
\]
Using the following expansions
\begin{align*}
\frac{1+t}{1-t^3}&=\frac13\left(\frac{2}{1-t}+\frac1{t-\omega}+\frac1{t-\omega^2}\right),\\
\frac{1-t}{1+t^3}&=\frac13\left(\frac{2}{t+1}-\frac1{t+\omega}-\frac1{t+\omega^2}\right),\\
\frac{1}{t^2+t+1}&=\frac{1}{i\sqrt3}\left(\frac1{t-\omega}-\frac1{t-\omega^2}\right),\\
\frac{1}{t^2-t+1}&=\frac{1}{i\sqrt3}\left(\frac1{t+\omega^2}-\frac1{t+\omega}\right),
\end{align*}
where $\omega=e^{2\pi i/3}$ is a third root of unity, we may write the Mahler measure in terms of hyperlogarithms as follows.
We can write
\begin{multline*}
\int_{-\infty}^\infty\m(P_{y_{k}})\frac{y_{k}+1}{y_{k}^3-1}\log^{2h-1}|y_{k}|\dd y_k=\frac13\int_0^1\log^{2h}t\left(-\frac{2}{t-1}+\frac1{t-\omega}+\frac1{t-\omega^2}\right)\dd t \\
+\frac13\int_0^1\log^{2h}t \left(\frac{2}{t+1}-\frac1{t+\omega}-\frac1{t+\omega^2}\right)\dd t.
\end{multline*}
Using the definition of hyperlogarithms as given in Section \ref{sec:hyperlogs} and identity \eqref{eq:hypertopoly}, for $a\in\C^*$, we have 
\begin{align*}
\int_0^1 \log^{2h}t\,\frac{1}{t-a}\dd t&= (-1)^{2h}(2h)!\int_0^1 \frac{\dd t}{t-a}\circ \overbrace{\frac{\dd t}{t}\circ\cdots\circ \frac{\dd t}{t}}^\text{$2h$ times}\\
&=(2h)!\cdot\mathrm{I}_{2h+1}(a,1)=-(2h)!\,\mathrm{Li}_{2h+1}(1/a).
\end{align*}
This gives, for $h\ge1$,
\begin{multline*}
\int_{-\infty}^\infty\m(P_{y_{k}})\frac{y_{k}+1}{y_{k}^3-1}\log^{2h-1}|y_{k}|\dd {y_k}=-\frac{(2h)!}{3}\Big(-2\Li_{2h+1}(1)+\Li_{2h+1}(\omega^2)+\Li_{2h+1}(\omega)\\
+2\Li_{2h+1}(-1)-\Li_{2h+1}(-\omega^2)-\Li_{2h+1}(-\omega) \Big).
\end{multline*}
We have the following identities:
\begin{align*}
\Li_{2h+1}(1)&=\zeta(2h+1),\\
\Li_{2h+1}(-1)&=-\zeta(2h+1)\left(1-\frac1{4^{h}}\right),\\
\Li_{2h+1}(\omega)+\Li_{2h+1}(\omega^2)&=-\zeta(2h+1)\left(1-\frac1{9^h}\right),\\
\Li_{2h+1}(-\omega)+\Li_{2h+1}(-\omega^2)&=\zeta(2h+1)\left(1-\frac1{9^h}\right)\left(1-\frac1{4^{h}}\right).\\
\end{align*}
We will show the third identity, since the rest can be proved in a similar manner. Since $2h+1\ge3$, the sum \[\Li_{2h+1}(z)=\sum_{n=1}^\infty\frac{z^n}{n^{2h+1}}\] converges absolutely for $|z|\le1$ and we are free to change the order of the terms. We write
\begin{align*}
\Li_{2h+1}(\omega)+\Li_{2h+1}(\omega^2)&=\sum_{j=1}^\infty\frac{\omega^j+\omega^{2j}}{j^{2h+1}}\\
 &=\sum_{j=1}^\infty\frac{2\cos(2\pi j/3)}{j^{2h+1}}\\
 &=-\frac1{1^{2h+1}}-\frac{1}{2^{2h+1}}+\frac2{3^{2h+1}}-\frac1{4^{2h+1}}-\cdots\\
 &=-\left(\frac1{1^{2h+1}}+\frac{1}{2^{2h+1}}+\frac1{3^{2h+1}}+\cdots\right)+3\left(\frac1{3^{2h+1}}+\frac{1}{6^{2h+1}}+\frac1{9^{2h+1}}+\cdots\right)\\
 &=-\zeta(2h+1)+\frac1{3^{2h}}\cdot\zeta(2h+1)\\
 &=-\zeta(2h+1)\left(1-\frac1{9^h}\right),
\end{align*}
as desired.\\

Using these identities, we obtain
\begin{equation*}
\int_{-\infty}^\infty\m(P_{y_{k}})\frac{y_{k}+1}{y_{k}^3-1}\log^{2h-1}|y_{k}|\dd {y_k}=2(2h)!\zeta(2h+1)\left(1-\frac1{3^{2h+1}}\right)\left(1-\frac1{2^{2h+1}}\right).
\end{equation*}
Similarly, we have for $h\ge0$,
\begin{align*}
\int_{-\infty}^\infty\m(P_{y_{k}})\frac{\log^{2h}|y_{k}|}{y_{k}^2+y_k+1}\dd {y_k}&=-\frac{1}{i\sqrt3}\int_0^1\log^{2h+1}t\left( \frac1{t-\omega}-\frac1{t-\omega^2}+\frac1{t+\omega^2}-\frac1{t+\omega}\right)\dd t\\
&=-\frac{(2h+1)!}{i\sqrt3}\Big(\Li_{2h+2}(\omega^2)-\Li_{2h+2}(\omega)+\Li_{2h+2}(-\omega)-\Li_{2h+2}(-\omega^2)\Big), 
\end{align*}
where we use
\begin{align*}
\int_0^1 \log^{2h+1}t\,\frac{1}{t-a}\dd t&= (-1)^{2h+1}(2h+1)!\int_0^1 \frac{\dd t}{t-a}\circ \overbrace{\frac{\dd t}{t}\circ\cdots\circ\frac{\dd t}t}^\text{$2h+1$ times}\\
&=-(2h+1)!\,\mathrm{I}_{2h+2}(a,1)=(2h+1)!\,\mathrm{Li}_{2h+2}(1/a).
\end{align*}
In this case, we will use the following identities
\begin{align*}
\Li_{2h+2}(\omega^2)-\Li_{2h+2}(\omega)&=-\sqrt3iL(\chi_{-3},2h+2),\\
\Li_{2h+2}(-\omega)-\Li_{2h+2}(-\omega^2)&=-\sqrt3i\left(1+\frac1{2^{2h+1}}\right)L(\chi_{-3},2h+2).\\
\end{align*}
This means
\begin{equation}\label{eq:basecased}
\int_{-\infty}^\infty\m(P_{y_{k}})\frac{\log^{2h}|y_{k}|}{y_{k}^2+y_k+1}\dd {y_k}=2(2h+1)!L(\chi_{-3},2h+2)\left(1+\frac1{2^{2h+2}}\right).
\end{equation}
Therefore, plugging the above relations into equations \eqref{eq:evencase} and \eqref{eq:oddcase} we obtain for the even case with $n\ge1$
\begin{multline}\label{eq:neweven}
F(2n)=\sum_{h=1}^n a_{n,h-1}\;\left(\frac\pi 3\right)^{2n-2h}2(2h)!\zeta(2h+1)\left(1-\frac1{3^{2h+1}}\right)\left(1-\frac1{2^{2h+1}}\right)\\
+\sum_{h=0}^{n-1} b_{n,h}\;\left(\frac\pi 3\right)^{2n-2h-1}2(2h+1)!L(\chi_{-3},2h+2)\left(1+\frac1{2^{2h+2}}\right),
\end{multline}
and for the odd case with $n\ge0$
\begin{multline}\label{eq:newodd}
F(2n+1)=\sum_{h=1}^n c_{n,h-1}\;\left(\frac\pi 3\right)^{2n-2h+1}2(2h)!\zeta(2h+1)\left(1-\frac1{3^{2h+1}}\right)\left(1-\frac1{2^{2h+1}}\right)\\
+\sum_{h=0}^{n} d_{n,h}\;\left(\frac\pi 3\right)^{2n-2h}2(2h+1)!L(\chi_{-3},2h+2)\left(1+\frac1{2^{2h+2}}\right).
\end{multline}

\subsection{The recursive coefficients}
\label{sec:reccoeffs}
What remains is to find a relation for the coefficients $a_{r,s}\,,b_{r,s}\,,c_{r,s},d_{r,s}$ appearing in the formulae above.  The idea is the following: we begin with the $(2n+2)$-fold multiple integral $F(2n+2)$ in \eqref{eq:genfk}.  As seen in equation \eqref{eq:evencase}, after the final iteration one obtains sums of single integrals with coefficients involving $a_{r,s}\,,b_{r,s}$. We may go back one step to the penultimate iteration,  where we will have a sum of double integrals, this time with coefficients involving the $c_{r,s},d_{r,s}$. Using the formulae for integrals obtained in the previous section, we can then compare coefficients and write $a_{r,s}$ and $b_{r,s}$ in terms of $c_{r,s}$ and $d_{r,s}$.  In turn, we do the same with $F(2n+1)$ to get expressions for $c_{r,s}$ and $d_{r,s}$ in terms of $a_{r,s}$ and $b_{r,s}$. 

We need to define some notation for the polynomials $R_k,S_k,P_k,Q_k,Y_k$ and $Z_k$.  Note that we  either have  only odd powers of $x$ or only even powers of $x$ occuring in each of these polynomials. Moreover, the degree of $R_k,P_k,Y_k$ and $Z_k$ is $k+1$ while it is $k$ for $S_k$ and $Q_k$.  For convenience, we define some notation in the following table: we write the polynomials which have only odd powers of $x$ in the left column and those with even powers of $x$ in the right as follows

\renewcommand{\arraystretch}{2.5}
\renewcommand{\tabcolsep}{0.5cm}
\[\begin{array}{|c|a|}
\hline 
\text{Only odd powers} & \text{Only even powers}\\
\hline
\displaystyle R_{2m}(x)=\sum_{j=1}^{m+1} r_{2m,j}\;x^{2j-1} & \displaystyle R_{2m-1}(x)=\sum_{j=0}^{m} r_{2m-1,j}\;x^{2j}\\
\hline
\displaystyle P_{2m}(x)=\sum_{j=1}^{m+1} p_{2m,j}\;x^{2j-1} & \displaystyle P_{2m-1}(x)=\sum_{j=0}^{m} p_{2m-1,j}\;x^{2j}\\
\hline
\displaystyle Y_{2m}(x)=\sum_{j=1}^{m+1} y_{2m,j}\;x^{2j-1} &\displaystyle Y_{2m-1}(x)=\sum_{j=0}^{m} y_{2m-1,j}\;x^{2j}\\
\hline
\displaystyle Z_{2m}(x)=\sum_{j=1}^{m+1} z_{2m,j}\;x^{2j-1}&\displaystyle Z_{2m-1}(x)=\sum_{j=0}^{m} z_{2m-1,j}\;x^{2j}\\
\hline
\displaystyle S_{2m-1}(x)=\sum_{j=1}^{m} s_{2m-1,j}\;x^{2j-1}&\displaystyle S_{2m}(x)=\sum_{j=0}^{m} s_{2m,j}\;x^{2j}\\
\hline 
\displaystyle Q_{2m-1}(x)=\sum_{j=1}^{m} q_{2m-1,j}\;x^{2j-1}&\displaystyle Q_{2m}(x)=\sum_{j=0}^{m} q_{2m,j}\;x^{2j}\\
\hline
\end{array}\]
and where all coefficients are rational numbers.\\

Now, for $n\ge0$ and $k=2n+2$, we have from \eqref{eq:evencase}
\begin{align}\label{eq:finit}
F(2n+2)&=\int_{y_k}\cdots \int_{y_1}\; \m(P_{y_{k}})\frac{y_1\dd y_1}{(y_1^2+y_1+1)}\cdots \frac{\dd y_{k}}{(y_{k}^2+y_{k}y_{k-1}+y_{k-1}^2)}\nonumber\\
\begin{split}
&=\sum_{h=1}^{n+1} a_{n+1,h-1}\;\left(\frac\pi 3\right)^{2n+2-2h}\int_{-\infty}^\infty\m(P_{y_{k}})\frac{y_{k}+1}{y_{k}^3-1}\log^{2h-1}|y_{k}|\dd y_k\\
&\qquad+\sum_{h=0}^{n} b_{n+1,h}\;\left(\frac\pi 3\right)^{2n-2h+1}\int_{-\infty}^\infty\m(P_{y_{k}})\frac{\log^{2h}|y_{k}|}{y_{k}^2+y_{k}+1}\dd y_k.
\end{split}
\end{align}
This is obtained by iteratively using the formulae from Section \ref{sec:generalintegrals} along with the procedure explained in \ref{sec:iteration} upto the last variable $y_{2n}=y_k$.  At the penultimate step, the integral can be written as
\begin{align}\label{eq:mainterm}
F(2n+2)&=\int_{y_k}^*\int_{y_{k-1}}^*\cdots \int_{y_1}^*\; \m(P_{y_{k}})\frac{y_1\dd y_1}{(y_1^2+y_1+1)}\cdots \frac{\dd y_{k}}{(y_{k}^2+y_{k}y_{k-1}+y_{k-1}^2)}\nonumber\\
&=\int_{y_k}^*\m(P_{y_{k}})\Bigg(\int_{y_{k-1}}^*\cdots \int_{y_1}^*\; \frac{y_1\dd y_1}{(y_1^2+y_1+1)}\cdots \frac{y_{k-1}\dd y_{k-1}}{(y_{k}^2+y_{k}y_{k-1}+y_{k-1}^2)}\Bigg)\dd y_{k}\nonumber\\
\begin{split}
&=\int_{y_k}^*\m(P_{y_{k}})\Bigg(\sum_{l=1}^n c_{n,l-1}\;\left(\frac\pi 3\right)^{2n-2l+1}\int_{y_{k-1}}^*\frac{y_{k-1}(y_{k-1}+1)\log^{2l-1}|y_{k-1}|}{(y_{k-1}^3-1)(y_{k}^2+y_{k}y_{k-1}+y_{k-1}^2)}\dd y_{k-1}\\
&\qquad\qquad\qquad+\sum_{i=0}^{n} d_{n,l}\;\left(\frac\pi 3\right)^{2n-2l}\int_{y_{k-1}}^*\frac{y_{k-1}\log^{2l}|y_{k-1}|}{(y_{k-1}^2+y_{k-1}+1)(y_{k}^2+y_{k}y_{k-1}+y_{k-1}^2)} \dd y_{k-1}\Bigg)\dd y_k.
\end{split}
\end{align}
Following the discussions in Section \ref{sec:iteration}, the limits of the inner-most integral are always from $-\infty$ to $\infty$. We have
\[
\int_{y_{k-1}}^*\frac{y_{k-1}(y_{k-1}+1)\log^{2l-1}|y_{k-1}|}{(y_{k-1}^3-1)(y_{k}^2+y_{k}y_{k-1}+y_{k-1}^2)}\dd y_{k-1}=\int_{y_{k-1}=-\infty}^\infty\frac{y_{k-1}(y_{k-1}+1)\log^{2l-1}|y_{k-1}|}{(y_{k-1}^3-1)(y_{k}^2+y_{k}y_{k-1}+y_{k-1}^2)}\dd y_{k-1},
\]
which will be given by the sum of \eqref{eq:gk} and \eqref{eq:g2k} with $a=1$ and $b=y_k$, or in other words, it will be given by equation \eqref{eq:sumgfinal} 
$$g_1(2l-1)+g_2(2l-1),$$ from Section \ref{sec:signindep}. We get

\begin{multline*}
\left. \begin{array}{rr}
\frac{\theta^{2l}}{3(y_k^2+y_k+1)}\left[-R_{2l-1}(0)+3R_{2l-1}\left(\frac{\log |y_k|}{\theta}\right)+Y_{2l-1}(0)\right]\\
+\frac{\theta^{2l}(y_k+1)}{\sqrt3(y_k^3-1)}\left[S_{2h-1}\left(\frac{\log |y_{k}|}{\theta}\right)-S_{2h-1}(0)\right]
\end{array}\qquad\right\}g_1(2l-1)\\
\left.\begin{array}{rr}
+\frac{\delta^{2l}}{3(y_k^2+y_k+1)}\left[-P_{2l-1}(0)+3P_{2l-1}\left(\frac{\log |y_{k}|}{\delta}\right)+Z_{2l-1}(0)\right]\\
+\frac{\delta^{2l}(y_{k}+1)}{\sqrt3(y_{k}^3-1)}\left[Q_{2l-1}\left(\frac{\log |y_{k}|}{\delta}\right)-Q_{2l-1}(0)\right],
\end{array}\qquad\right\} g_2(2l-1)
\end{multline*}
where the first two lines correspond to $g_1(2l-1)$ and the next two lines to $g_2(2l-1)$.  Using the notation for the polynomials $R_k,S_k,P_k,Q_k,Y_k$ and $Z_k$ defined above (and also noting that $S_{2h-1}(x)$ and $Q_{2h-1}(x)$ have no constant term),  we can write this as
\begin{multline*}
\frac{\theta^{2l}}{3(y_k^2+y_k+1)}\left[-r_{2l-1,0}+3\sum_{j=0}^lr_{2l-1,j}\left(\frac{\log |y_k|}{\theta}\right)^{2j}+y_{2l-1,0}\right]\\
+\frac{\theta^{2l}(y_k+1)}{\sqrt3(y_k^3-1)}\left[\sum_{j=1}^ls_{2l-1,j}\left(\frac{\log |y_k|}{\theta}\right)^{2j-1}\right]\\
+\frac{\delta^{2l}}{3(y_k^2+y_k+1)}\left[-p_{2l-1,0}+3\sum_{j=0}^lp_{2l-1,j}\left(\frac{\log |y_k|}{\delta}\right)^{2j}+z_{2l-1,0}\right]\\
+\frac{\delta^{2l}(y_{k}+1)}{\sqrt3(y_{k}^3-1)}\left[\sum_{j=1}^lq_{2l-1,j}\left(\frac{\log |y_k|}{\delta}\right)^{2j-1}\right].
\end{multline*}
Collecting the terms with $\frac{1}{(y_k^2+y_k+1)}$ together and those with $\frac{(y_k+1)}{(y_k^3-1)}$ and replacing $\theta=2\pi/3$ by $2\delta$ we have
\begin{multline}\label{eq:term1}
\frac{\delta^{2l}}{3(y_k^2+y_k+1)}\left[2^{2l}(-r_{2l-1,0}+y_{2l-1,0})-p_{2l-1,0}+z_{2l-1,0}+3\sum_{j=0}^l(2^{2l-2j}r_{2l-1,j}+p_{2l-1,j})\left(\frac{\log |y_k|}{\delta}\right)^{2j}\right]\\
+\frac{\delta^{2l}(y_{k}+1)}{\sqrt3(y_{k}^3-1)}\left[\sum_{j=1}^l(2^{2l-2j+1}s_{2l-1,j}+q_{2l-1,j})\left(\frac{\log |y_k|}{\delta}\right)^{2j-1}\right].
\end{multline}
Next, the second inner integral in \eqref{eq:mainterm},
\[
\int_{y_{k-1}}^*\frac{y_{k-1}\log^{2l}|y_{k-1}|}{(y_{k-1}^2+y_{k-1}+1)(y_{k}^2+y_{k}y_{k-1}+y_{k-1}^2)} \dd y_{k-1}=\int_{y_{k-1}=-\infty}^\infty\frac{y_{k-1}\log^{2l}|y_{k-1}|}{(y_{k-1}^2+y_{k-1}+1)(y_{k}^2+y_{k}y_{k-1}+y_{k-1}^2)} \dd y_{k-1}
\]
is given this time by equation \eqref{eq:sumffinal}$$f_1(2l)+f_2(2l),$$again with $a=1$ and $b=y_k$, which leads to
\begin{multline*}
\qquad\left. \begin{array}{rr}
\frac{\theta^{2l+1}(y_k+1)}{(y_k^3-1)}\left[R_{2l}\left(\frac{\log |y_{k}|}{\theta}\right)-R_{2l}(0)\right]\\
+\frac{\theta^{2l+1}}{\sqrt3(y_k^2+y_k+1)}\left[S_{2l}\left(\frac{\log |y_k|}{\theta}\right)+S_{2l}(0)\right]
\end{array}\qquad\qquad\right\}f_1(2l)\\
\left.\begin{array}{lr}
+\frac{\delta^{2l+1}(y_k+1)}{(y_k^3-1)}\left[P_{2l}\left(\frac{\log |y_{k}|}{\delta}\right)-P_{2l}(0)\right]\\
+\frac{\delta^{2l+1}}{\sqrt3(y_k^2+y_k+1)}\left[Q_{lh}\left(\frac{\log |y_k|}{\delta}\right)+Q_{2l}(0)\right].
\end{array}\qquad\qquad\right\} f_2(2l)
\end{multline*}
Rewriting the polynomials and collecting terms together we obtain
\begin{multline}\label{eq:term2}
\frac{\delta^{2l+1}(y_k+1)}{(y_k^3-1)}\left[\sum_{j=1}^{l+1}(2^{2l-2j+2}r_{2l,j}+p_{2l,j})\left(\frac{\log |y_{k}|}{\delta}\right)^{2j-1}\right]\\
+\frac{\delta^{2l+1}}{\sqrt3(y_k^2+y_k+1)}\left[2^{2l+1}s_{2l,0}+q_{2l,0}+\sum_{j=0}^h(2^{2l-2j+1}s_{2l,j}+q_{2l,j})\left(\frac{\log |y_k|}{\delta}\right)^{2j}\right].
\end{multline}
Finally, we plug the expressions \eqref{eq:term1} and \eqref{eq:term2} back into equation \eqref{eq:mainterm} and compare coefficients with the initial expression for $F(2n+2)$ given by \eqref{eq:finit}. Comparing coefficients of $\frac{(y_k+1)}{(y_k^3-1)}\log^{2h-1}|y_k|$ on both sides gives us
\begin{multline*}
a_{n+1,h-1}\delta^{2n+2-2h}=\sum_{l=h}^nc_{n,l-1}\delta^{2n-2l+1}\cdot\left(\frac{\delta^{2l-2h+1}}{\sqrt3}(2^{2l-2h+1}s_{2l-1,h}+q_{2l-1,h})\right)\\
+\sum_{l=h-1}^nd_{n,l}\delta^{2n-2l}\cdot\left(\delta^{2l-2h+2}(2^{2l-2h+2}r_{2l,h}+p_{2l,h})\right),
\end{multline*}
which means, for $n\ge0$,
\begin{equation}\label{eq:aij}
a_{n+1,h-1}=\frac{1}{\sqrt3}\sum_{l=h}^nc_{n,l-1}(2^{2l-2h+1}s_{2l-1,h}+q_{2l-1,h})+\sum_{l=h-1}^nd_{n,l}(2^{2l-2h+2}r_{2l,h}+p_{2l,h}).
\end{equation}
Similarly, comparing coefficients of $\frac{\log^{2h}|y_{k}|}{y_{k}^2+y_{k}+1}$, we obtain an expression for $b_{n+1,h}$. Since equations \eqref{eq:term1} and \eqref{eq:term2} have some additional constant terms corresponding to the case when $h=0$, we write them separately. When $h\ge1$, we have
\begin{multline*}
b_{n+1,h}\delta^{2n-2h+1}=\sum_{l=h}^nc_{n,l-1}\delta^{2n-2l+1}\cdot\left(\frac{\delta^{2l-2h}}{3}\cdot3(2^{2l-2h}r_{2l-1,h}+p_{2l-1,h})\right)\\
+\sum_{l=h-1}^nd_{n,l}\delta^{2n-2l}\cdot\left(\frac{\delta^{2l-2h+1}}{\sqrt3}\cdot(2^{2l-2h+1}s_{2l,h}+q_{2l,h}) \right),
\end{multline*}
giving us
\begin{equation}\label{eq:bij}
b_{n+1,h}=\sum_{l=h}^nc_{n,l-1}(2^{2l-2h}r_{2l-1,h}+p_{2l-1,h})+\frac1{\sqrt3}\sum_{l=h-1}^nd_{n,l}(2^{2l-2h+1}s_{2l,h}+q_{2l,h}).
\end{equation}
When $h=0$, we have
\begin{equation}\label{eq:bi0}
b_{n+1,0}=\frac13\sum_{l=1}^nc_{n,l-1}\Big(2^{2l}(2r_{2l-1,0}+y_{2l-1,0})+2p_{2l-1,0}+z_{2l-1,0}\Big)+\frac2{\sqrt3}\sum_{l=0}^nd_{n,l}(2^{2l+1}s_{2l,0}+q_{2l,0}).
\end{equation}
We can use the same procedure to obtain expressions for $c_{n,h-1}$ and $d_{n,h}$ by starting with the expression for $F(2n+1)$ for $n\ge1$ and expanding the integral from the penultimate iteration. This gives us the following for $h\ge1$ and $n\ge1$,
\begin{align}\label{eq:cij}
c_{n,h-1}=\frac1{\sqrt3}\sum_{l=h}^na_{n,l-1}(2^{2l-2h+1}s_{2l-1,h}+q_{2l-1,h})+\sum_{l=h-1}^{n-1}b_{n,l}(2^{2l-2h+2}r_{2l,h}+p_{2l,h}),
\end{align}
and 
\begin{align}
\label{eq:di0}d_{n,0}&=\frac13\sum_{l=1}^na_{n,l-1}\Big(2^{2l}(2r_{2l-1,0}+y_{2l-1,0})+2p_{2l-1,0}+z_{2l-1,0}\Big)+\frac2{\sqrt3}\sum_{l=0}^{n-1}b_{n,l}(2^{2l+1}s_{2l,0}+q_{2l,0}),\\
\label{eq:dij}d_{n,h}&=\sum_{l=h}^na_{n,l-1}(2^{2l-2h}r_{2l-1,h}+p_{2l-1,h})+\frac1{\sqrt3}\sum_{l=h}^{n-1}b_{n,l}(2^{2l-2h+1}s_{2l,h}+q_{2l,h}).
\end{align}
Finally, to complete the recursive formula, we must calculate initial values. To do this, we evaluate the Mahler measure of the first polynomial in the family as follows:
\begin{align*}
\m(Q_1)=\m\left(y+\left(\frac{\ol\omega z_1+\omega}{z_1+1}\right)\right)&=\frac{\sqrt3}{2\pi}\cdot F(1)\\
&=\frac{\sqrt3}{2\pi}\int_{-\infty}^\infty \m(P_{y_{1}})\frac{\dd {y_1}}{y_{1}^2+y_1+1}\\
&=\frac{\sqrt3}{2\pi}\cdot 2\cdot L(\chi_{-3},2)\left(1+\frac1{2^{2}}\right)=\frac{5\sqrt3}{4 \pi}\,L(\chi_{-3},2),
\end{align*}
using equation \eqref{eq:basecased}. This means that we can set $d_{0,0}=1$. From this base value, we can obtain all subsequent values of $a_{r,s}\,,b_{r,s}\,,c_{r,s},d_{r,s}$ using equations \eqref{eq:aij}-\eqref{eq:dij} above, with appropriate choices of $n$ and $h$. For example, with $n=0,h=1$ in \eqref{eq:aij} and $n=0$ in \eqref{eq:bi0}, we have $a_{1,0}=2$ and $b_{1,0}=\frac2{\sqrt3}$ respectively, giving us the Mahler measure of $Q_2$:
\[
\m(Q_2)=\frac{91}{18\pi^2}\zeta(3)+\frac{5}{4\sqrt3 \pi}\,L(\chi_{-3},2).
\] 
One can also confirm that these values of $a_{1,0}$ and $b_{1,0}$ agree with the results of Section \ref{sec:generalintegrals}. See Table \ref{table:arbitrary} for more examples of the Mahler measure evaluations.
\subsection{The final Mahler measure}
We are now ready to combine all the above details and complete the proof of our result.
\begin{proof}[Proof of Theorem \ref{thm:main}]
Observe that from the definition of $F(k)$ given in \eqref{eq:genfk}, the Mahler measure in \eqref{eq:mahlerint} is given by
\[
\m(Q_n)=\left(\frac{\sqrt3}{2\pi}\right)^n\cdot F(n).
\]
Thus,  plugging in equations \eqref{eq:neweven} and \eqref{eq:newodd} into the above and simplifying, we obtain for $n\ge1$
\begin{multline*}
\m(Q_{2n})=\frac{2}{12^n}\Bigg(\sum_{h=1}^n a_{n,h-1}\;9^h(2h)!\left(1-\frac1{3^{2h+1}}\right)\left(1-\frac1{2^{2h+1}}\right)\frac{\zeta(2h+1)}{\pi^{2h}}\\
+3\sum_{h=0}^{n-1} b_{n,h}\;9^h(2h+1)!\left(1+\frac1{2^{2h+2}}\right)\frac{L(\chi_{-3},2h+2)}{\pi^{2h+1}}\Bigg),
\end{multline*}
and for $n\ge0$,
\begin{multline*}
\m(Q_{2n+1})=\frac{1}{{12}^n\sqrt3}\Bigg(\sum_{h=1}^n c_{n,h-1}\;9^h(2h)!\left(1-\frac1{3^{2h+1}}\right)\left(1-\frac1{2^{2h+1}}\right)\frac{\zeta(2h+1)}{\pi^{2h}}\\
+3\sum_{h=0}^{n} d_{n,h}\;9^h(2h+1)!\left(1+\frac1{2^{2h+2}}\right)\frac{L(\chi_{-3},2h+2)}{\pi^{2h+1}}\Bigg),
\end{multline*}
where the coefficients $a_{r,s}\,,b_{r,s}\,,c_{r,s},d_{r,s}\in\Q(\sqrt3)$ are given recursively by \eqref{eq:aij}-\eqref{eq:dij} starting from the initial value $d_{0,0}=1$. We will see in Section \ref{subsec:derivatives}, that $a_{r,s},d_{r,s}\in\Q$ while $b_{r,s},c_{r,s}\in\sqrt3\cdot\Q$. Rearranging the powers, we can write the Mahler measures compactly as done in Theorem \ref{thm:main} as follows:
\begin{equation}\label{eq:maineven}
\m(Q_{2n})=\frac{2}{12^n}\Bigg(\sum_{h=1}^n  a_{n,h-1}\left(\frac{3}{\pi}\right)^{2h}\mathcal{C}(h)\;+\;\sum_{h=0}^{n-1}b_{n,h}\left(\frac{3}{\pi}\right)^{2h+1}\mathcal{D}(h)\Bigg),
\end{equation}
and for $n\ge0$ we have
\begin{equation}\label{eq:mainodd}
\m(Q_{2n+1})=\frac{1}{{12}^n}\Bigg(\frac1{\sqrt3}\sum_{h=1}^n  c_{n,h-1}\left(\frac{3}{\pi}\right)^{2h}\mathcal{C}(h)\;+\;\frac1{\sqrt3}\sum_{h=0}^{n}d_{n,h}\left(\frac{3}{\pi}\right)^{2h+1}\mathcal{D}(h)\Bigg),
\end{equation}
where
\begin{equation*}
\mathcal{C}(h)=(2h)!\left(1-\frac1{3^{2h+1}}\right)\left(1-\frac1{2^{2h+1}}\right)\zeta(2h+1),
\end{equation*}
and
\begin{equation*}
\mathcal{D}(h)=(2h+1)!\left(1+\frac1{2^{2h+2}}\right)L(\chi_{-3},2h+2).
\end{equation*}
\end{proof}
\subsection{More on the coefficients $a_{r,s}\,,b_{r,s}\,,c_{r,s}$ and $d_{r,s}$}\label{subsec:derivatives}
In this section, we show that the coefficients $a_{r,s}$ and $d_{r,s}$ are always rational, whereas the coefficients $b_{r,s}$ and $c_{r,s}$ are always a rational number multiplied by $\sqrt3$. This lets us express the Mahler measure $\m(Q_n)$ as a sum of rational multiples of derivatives of the Riemann zeta function and the $L$-function $L(\chi_{-3},s)$ evaluated at certain negative integers.
\begin{prop}\label{clm:rational}
Let $n\ge0$ be an integer. Then the coefficients $a_{r,s}\,,b_{r,s}\,,c_{r,s}$ and $d_{r,s}$ appearing in the recursive expressions \eqref{eq:aij}-\eqref{eq:dij} satisfy
\[
a_{n,j}, d_{n,k}\in\Q,
\]
and
\[
b_{n,j},c_{n,j}\in\sqrt3\cdot\Q,
\]
for all integers $0\le j\le n-1$ and $0\le k\le n$. 
\end{prop}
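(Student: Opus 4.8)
The plan is to prove the statement by induction on $n$, the foundation being that the coefficients $r_{i,j}, s_{i,j}, p_{i,j}, q_{i,j}, y_{i,j}, z_{i,j}$ of the polynomials $R_k, S_k, P_k, Q_k, Y_k, Z_k$ are all rational. This rationality is immediate from the recursions \eqref{eq:rk}, \eqref{eq:sk}, \eqref{eq:pk}, \eqref{eq:qk} (and the corresponding definitions of $Y_k$ and $Z_k$), since each involves only binomial coefficients, integer powers, and rational prefactors, with no occurrence of $\sqrt3$. The only algebraic facts I would use are that $\Q$ and $\sqrt3\cdot\Q$ are each closed under addition and under multiplication by a rational scalar, together with the grading rules $\tfrac{1}{\sqrt3}\cdot(\sqrt3\cdot\Q)=\Q$ and $\tfrac{1}{\sqrt3}\cdot\Q=\sqrt3\cdot\Q$ (and likewise for the prefactors $\tfrac{2}{\sqrt3}$ and $\tfrac13$ that appear in the recursions). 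In other words, $\Q$ and $\sqrt3\cdot\Q$ behave like the two homogeneous pieces of a grading on $\Q(\sqrt3)$, and the whole proof amounts to checking that the explicit prefactors in \eqref{eq:aij}--\eqref{eq:dij} move between these pieces exactly as claimed.

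Let $P(n)$ be the assertion ``$a_{n,j}, d_{n,j}\in\Q$ and $b_{n,j}, c_{n,j}\in\sqrt3\cdot\Q$ for all valid indices $j$''. The base case $n=0$ reduces to $d_{0,0}=1\in\Q$, since the index ranges force $a_{0,j}, b_{0,j}, c_{0,j}$ to be vacuous. For the inductive step, the key structural observation is the order of dependence: equations \eqref{eq:aij}--\eqref{eq:bi0} express $a_{n+1,\cdot}$ and $b_{n+1,\cdot}$ purely in terms of the level-$n$ coefficients $c_{n,\cdot}$ and $d_{n,\cdot}$, whereas \eqref{eq:cij}--\eqref{eq:dij} express $c_{n+1,\cdot}$ and $d_{n+1,\cdot}$ purely in terms of $a_{n+1,\cdot}$ and $b_{n+1,\cdot}$ at the \emph{same} level $n+1$. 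I would therefore first apply \eqref{eq:aij} to obtain $a_{n+1,\cdot}\in\Q$ (the $c$-sum carries a $1/\sqrt3$ that cancels the $\sqrt3$ in $c_{n,\cdot}$, while the $d$-sum is already rational) and \eqref{eq:bij}, \eqref{eq:bi0} to obtain $b_{n+1,\cdot}\in\sqrt3\cdot\Q$ (the $c$-sum stays in $\sqrt3\cdot\Q$, while the $1/\sqrt3$ or $2/\sqrt3$ factor promotes the rational $d$-sum into $\sqrt3\cdot\Q$). Having established $a_{n+1,\cdot}$ and $b_{n+1,\cdot}$, I would then feed these into \eqref{eq:cij} to get $c_{n+1,\cdot}\in\sqrt3\cdot\Q$ and into \eqref{eq:di0}, \eqref{eq:dij} to get $d_{n+1,\cdot}\in\Q$, by the same bookkeeping. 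This completes $P(n+1)$ and hence the induction.

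There is no genuine obstacle here; the entire content lies in the routine verification that each of the six recursion equations preserves or toggles the $\sqrt3$-grading as required, which one could package once as a standalone ``grading-preservation'' observation and then invoke mechanically. The single point demanding mild care is the interleaving of the two families within one inductive step: because $c_{n+1,\cdot}$ and $d_{n+1,\cdot}$ depend on $a_{n+1,\cdot}$ and $b_{n+1,\cdot}$ rather than on level-$n$ data, one must compute $a_{n+1},b_{n+1}$ before $c_{n+1},d_{n+1}$. Finally, this Proposition is precisely what justifies the closing remark in the proof of Theorem \ref{thm:main}: since $a_{n,\cdot}, d_{n,\cdot}\in\Q$ while $b_{n,\cdot}, c_{n,\cdot}\in\sqrt3\cdot\Q$, and since the odd-case formula \eqref{eq:mainodd} carries an overall factor $1/\sqrt3$, every coefficient multiplying $\mathcal{C}(h)$ and $\mathcal{D}(h)$ in \eqref{eq:maineven} and \eqref{eq:mainodd} becomes rational, so that $\m(Q_n)$ is a $\Q$-linear combination of the relevant zeta and $L$-values.
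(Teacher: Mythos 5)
Your proof is correct and follows essentially the same route as the paper: an induction grounded in the rationality of the coefficients of $R_k,S_k,P_k,Q_k,Y_k,Z_k$, which then tracks how the explicit prefactors $\tfrac1{\sqrt3}$, $\tfrac2{\sqrt3}$, $\tfrac13$ in \eqref{eq:aij}--\eqref{eq:dij} move quantities between $\Q$ and $\sqrt3\cdot\Q$. The only organizational difference is that you run an interleaved induction in dependency order (first $a_{n+1,\cdot},b_{n+1,\cdot}$ from $c_{n,\cdot},d_{n,\cdot}$, then $c_{n+1,\cdot},d_{n+1,\cdot}$ from those), starting from $d_{0,0}=1$, whereas the paper substitutes the $(c,d)$ recursions into the $(a,b)$ recursions to get a self-contained induction on the $(a,b)$ family with base case $a_{1,0}=2$, $b_{1,0}=\tfrac2{\sqrt3}$, and handles $(c,d)$ afterwards; the underlying bookkeeping is identical.
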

\begin{rem}
Following the discussions of Section \ref{sec:alternat} and Proposition \ref{prop:summary}, we can obtain an alternate expression for the coefficients $a_{r,s}\,,b_{r,s}\,,c_{r,s}$ and $d_{r,s}$ using polynomials that can be calculated explicitly (for instance, the polynomials $A_m(x)$ and $B_m(x)$ in equations \eqref{eq:am_expl} and  \eqref{eq:bm_expl} respectively). Since these polynomials have coefficients that are rational multiples of integer powers of $\sqrt3$, we can immediately deduce that $a_{r,s}\,,b_{r,s}\,,c_{r,s}$ and $d_{r,s}$ lie in $\Q\cup\sqrt3\cdot\Q$. Proposition \ref{clm:rational} makes this observation more precise. 
\end{rem}
\begin{proof}[Proof of Proposition]
We begin with the numbers $a_{r,s}$ and $b_{r,s}$ and proceed by induction. The base case is when $a_{1,0}=2$, which is clearly rational, and $b_{1,0}=\frac{2}{\sqrt3}=\frac23\sqrt3$, a rational multiple of $\sqrt3$, confirming the hypothesis for these cases. Now assume that for all $1\le n\le k$ and $0\le j\le n-1$, we have
\[
a_{n,j}\in\Q\quad\text{and}\quad b_{n,j}\in\sqrt3\cdot\Q.
\]
We consider $a_{n+1,j}$ and $b_{n+1,j}$ for $0\le j\le n$.  It can be observed that in each of the expressions \eqref{eq:aij}-\eqref{eq:dij}, the numbers $a_{r,s}\,,b_{r,s}\,,c_{r,s}$ and $d_{r,s}$ that appear inside the summations are multiplied by certain expressions involving rational coefficients of the polynomials $R_k,S_k,P_k,Q_k,Y_k$ and $Z_k$. For example, we have the terms
\[
(2^{2l-2h+1}s_{2l-1,h}+q_{2l-1,h})\quad\text{and}\quad(2^{2l-2h+2}r_{2l,h}+p_{2l,h}),
\]
in the two summations in equation \eqref{eq:aij}. These expressions are always rational, and for ease of notation, we will denote them by the symbol $\Delta\in\Q$ everywhere. Note that we may replace the expressions for the coefficients $c_{r,s}$ (from \eqref{eq:cij}), and for $d_{r,s}$ (from \eqref{eq:di0} and \eqref{eq:dij}) back into the expression for $a_{r,s}$ in \eqref{eq:aij}. This lets us write $a_{n+1,j}$ as a linear combination of $a_{n,k}$ and $b_{n,k}$ for $0\le k\le n-1$ as follows,
\begin{multline}
a_{n+1,j}=\frac{1}{\sqrt3}\sum_{l=j+1}^n(\Delta)\cdot\left(\frac{1}{\sqrt3}\sum_{i=l}^na_{n,i-1}(\Delta)+\sum_{i=l-1}^{n-1}b_{n,i}(\Delta)\right)\\
+\sum_{l=j}^n(\Delta)\cdot\left(\sum_{i=l}^na_{n,i-1}(\Delta)+\frac1{\sqrt3}\sum_{i=l}^{n-1}b_{n,i}(\Delta)\right),
\end{multline}
where, as discussed above, we combine all obviously rational expressions and replace them by the symbol $\Delta$. Distributing the products, this can be rewritten as
\begin{multline}
a_{n+1,j}=\frac{1}{3}\sum_{l=j+1}^n(\Delta)\cdot\left(\sum_{i=l}^na_{n,i-1}(\Delta)\right)+\frac{1}{\sqrt3}\sum_{l=j+1}^n(\Delta)\cdot\left(\sum_{i=l-1}^{n-1}b_{n,i}(\Delta)\right)\\
+\sum_{l=j}^n(\Delta)\cdot\left(\sum_{i=l}^na_{n,i-1}(\Delta)\right)+\frac1{\sqrt3}\sum_{l=j}^n(\Delta)\cdot\left(\sum_{i=l}^{n-1}b_{n,i}(\Delta)\right).
\end{multline}
Note that all sums involving the numbers $a_{n,i-1}$ have rational coefficients, whereas all sums involving $b_{n,i}$ have rational multiples of $\sqrt3$. Using the induction hypothesis, this means that $a_{n+1,j}\in\Q$, confirming the statement for $a_{r,s}$. A similar argument will show that $b_{n+1,j}\in\sqrt3\cdot\Q$, confirming the statement for $b_{r,s}$ and completing the induction procedure. The same process can be applied to the numbers $c_{r,s}$ and $d_{r,s}$ to conclude the proof of the proposition.
\end{proof}

Following this proposition, we can rewrite the expressions for the Mahler measures in equations \eqref{eq:maineven} and \eqref{eq:mainodd}. First, using the functional equation of the Riemann zeta function, we can express its derivative at negative even integers in terms of its value at positive odd integers as follows
\begin{equation}\label{eq:zeta'}
\zeta'(-2h)=\frac{(-1)^h(2h)!}{2^{2h+1}}\cdot\frac{\zeta(2h+1)}{\pi^{2h}}.
\end{equation}
Similarly, we can express derivatives of Dirichlet $L$-functions at negative odd integers in terms of its values at positive even integers. Indeed, the functional equation of the Dirichlet $L$-function $L(\chi,s)$ associated to an odd, primitive, non-principal Dirichlet character $\chi$ of conductor $q$ is given by (see \cite[Corollary 10.9]{montgomery})
\[
L(\chi,1-s)=2\left(\frac{q}{2\pi}\right)^s\,\frac{\tau(\chi)}{iq}\,\Gamma(s)\sin\left(\frac{\pi s}2\right)L(\ol\chi,s),
\]
where $\Gamma(s)$ is the Gamma function and $\tau(\chi)$ is the Gauss sum of $\chi$. Note that all functions appearing on the right-hand side are holomorphic on $\mathrm{Re}(s)>0$, and the function $\sin\left(\frac{\pi s}2\right)$ has zeros at even integers. Thus, taking derivatives on both sides of the functional equation, and evaluating at $s=2n$ for a positive integer $n$, we obtain
\[
-L'(\chi,1-2n)=\pi\left(\frac{q}{2\pi}\right)^{2n}\,\frac{\tau(\chi)}{iq}\,\Gamma(2n)\cos\left(\frac{\pi s}2\right)L(\ol\chi,2n).
\]
Taking $\chi=\chi_{-3}$ so that $\ol\chi=\chi,\;q=3$ and $\tau(\chi)=\sqrt3i$, we finally get
\begin{equation}\label{eq:L'}
L'(\chi_{-3},1-2n)=\frac{(-1)^{n+1}(2n-1)!\,3^{2n}}{2^{2n}\sqrt3}\cdot\frac{L(\chi_{-3},2n)}{\pi^{2n-1}}.
\end{equation} 
Plugging in \eqref{eq:zeta'} and \eqref{eq:L'} into equations \eqref{eq:maineven} and \eqref{eq:mainodd}, we obtain for $n\ge1$,
\begin{multline}\label{eq:dereven}
\m(Q_{2n})=\frac{2}{12^n}\Bigg(\sum_{h=1}^n  \frac{(-1)^ha_{n,h-1}}3\,(3^{2h+1}-1)(2^{2h+1}-1)\zeta'(-2h)\\
+\sum_{h=0}^{n-1}\frac{(-1)^hb_{n,h}}{\sqrt3}\,(2^{2h+2}+1)L'(\chi_{-3},-2h-1)\Bigg),
\end{multline}
and for $n\ge0$ we have
\begin{multline}\label{eq:derodd}
\m(Q_{2n+1})=\frac{1}{12^n}\Bigg(\sum_{h=1}^n \frac{(-1)^hc_{n,h-1}}{3\sqrt3}\,(3^{2h+1}-1)(2^{2h+1}-1)\zeta'(-2h)\\
+\sum_{h=0}^{n}\frac{(-1)^hd_{n,h}}{3}\,(2^{2h+2}+1)L'(\chi_{-3},-2h-1)\Bigg),
\end{multline}
where, as a result of Proposition \ref{clm:rational}, $a_{r,s},d_{r,s}\in\Q$ while $b_{r,s},c_{r,s}\in\sqrt3\cdot\Q$. Thus, the Mahler measure $\m(Q_n)$ is always a $\Q$-linear combination of derivatives of the Riemann zeta function evaluated at negative even integers and derivatives of the $L$-function $L(\chi_{-3},s)$ evaluated at negative odd integers. The first few examples of the Mahler measure computation have been listed in Table \ref{table:arbitrary}.

\section{Conclusion}
These results show that the techniques employed in \cite{nvar} can be extended to a more general family of polynomials. It is interesting to note that our formulae have a combination of zeta-values and $L$-values corresponding to the Dirichlet character of conductor 3 in both the odd and even cases.  The appearance of $L(\chi_{-3},s)$ is due to the evaluation of polylogarithms at the third and sixth root of unity, as opposed to the fourth root of unity in \cite{nvar} which yields values of $L(\chi_{-4},s)$. This motivates the question of whether these techniques can be further generalized to higher roots of unity to obtain $L$-values corresponding to higher conductors.  The integrals involving powers of the logarithm as seen in Section \ref{sec:generalintegrals} and the limits of the integrals in the iteration step discussed in \ref{sec:compute} give rise to some complicated calculations. It would be interesting to see if these calculations can be captured in a more general setting. In Section \ref{sec:alternat}, we saw an alternate method to evaluate integrals $f_1(m),f_2(m),g_1(m)$ and $g_2(m)$. This method resulted in polynomials that have explicit expressions, as compared to the recursive polynomials that arose from the first method. While the explicit expressions for the first two polynomials was obtained, it would be worthwhile to use the same techniques and obtain such expressions for all the remaining polynomials. Another intriguing question would be to investigate whether these explicit expressions enable us to write the coefficients  $a_{r,s}\,,b_{r,s}\,,c_{r,s}$ and $d_{r,s}$ in an elegant closed formula, like the coefficients that appear in \cite{nvar}. \\

In Section \ref{sec:generalintegrals}, we used the fact that the polynomial $P_\gamma(y)$ has Mahler measure $\log^+|\gamma|$, and is thus dependent only on the absolute value of $\gamma$. In \cite{nvar}, other families of polynomials were also studied -- those that stemmed from simpler polynomials (like $P_\gamma(y)$) whose Mahler measures only depended on the absolute value of a parameter. For instance, we have the following identities:
\[
\m(1+\gamma x+(1-\gamma)y)=|\mathrm{arg}\gamma|\log|1-\gamma|+|\mathrm{arg}(1-\gamma))|\cdot\log|\gamma|+\left\{\begin{array}{rl}
D(\gamma) & \text{if Im}(\gamma)\ge0,\\
D(\ol{\gamma}) & \text{if Im}(\gamma)<0,
\end{array}\right.
\]
which follow from the Cassaigne-Maillot formula (see  \cite[Proposition 4.3]{BrunaultZudilin}) where $D(z)$ is the Bloch-Wigner dilogarithm (see \cite{Zagier2007}); and 

\[
\m(1+x+\gamma(1+y)z)=\left\{\begin{array}{ll}
\frac{2}{\pi^2}\mathcal{L}_3(|\gamma|) & \text{for }|\gamma|\le1,\\
\log|\gamma|+\frac{2}{\pi^2}\mathcal{L}_3(|\gamma|^{-1}) & \text{for }|\gamma|>1,\\
\end{array}\right.
\]
which was proved by Vandervelde in \cite{Vandervelde} (see also \cite[Theorem 17]{L}), where
\[
\mathcal L_3(\gamma)=-2\int_0^\gamma\frac{\dd s}{s^2-1}\circ\frac{\dd s}s\circ\frac{\dd s}s.
\] 
It would be worth exploring whether similar families can be constructed from such $P_\gamma$ and their Mahler measures studied in context of our results.  

Recently, the method of Lalín was extended to certain families of $n$-variable polynomials with non-linear degree in \cite{LNR}, by evaluating the Mahler measure of rational functions $S_{n,r}$ of the form 
\[
S_{n,r}(x,y,z,x_1,\dots,x_n)=(1+x)z +\left[\left(\frac{1-x_1}{1+x_1}\right)
\cdots \left( \frac{1-x_{n}}{1+x_{n}}\right)\right]^r(1+y),
\] 
for any integer $r\ge1$. Interestingly, considering such a family under the construction mentioned in this article leads to trivial evaluations. Indeed, if we let
\[Q_{n,r}(z_1,\dots, z_n,y):=y+\left[\left(\frac{z_1+\omega}{z_1+1}\right)\cdots\left(\frac{z_n+\omega}{z_n+1}\right)\right]^r,\]
then, we trivially have that 
\begin{align*}\m(Q_{n,r}(z_1,\dots, z_n,y))=&\m(Q_{n,r}(z_1,\dots, z_n,-y^r))=\sum_{j=0}^r\m\left( y-\xi_r^j\left(\frac{z_1+\omega}{z_1+1}\right)\cdots\left(\frac{z_n+\omega}{z_n+1}\right)\right)\\=&r\m(Q_{n,1}(z_1,\dots, z_n,y))=r\m(Q_n(z_1,\dots, z_n,y)),\end{align*}
where $\xi_r$ is an $r^\text{th}$ root of unity. Thus, the Mahler measure of this new family is simply $r$ times that of the family $Q_n$ considered in this article. However, it will be interesting to explore the following family of rational functions
\[T_{n,r}(z_1,\dots, z_n,x,y):=1 + \left[\left(\frac{z_1+\omega}{z_1+1}\right)\cdots\left(\frac{z_n+\omega}{z_n+1} \right)\right]^rx + \left( 1 -\left[\left(\frac{z_1+\omega}{z_1+1}\right)\cdots\left(\frac{z_n+\omega}{z_n+1}\right)\right]^r \right)y.\]

Finally, we note that in \cite{LN2023}, certain transformations are described which when applied on a polynomial, preserve the Mahler measure. In Section 4 of the same article, these transformations are applied on the polynomials appearing in \cite{nvar} to obtain infinitely many new rational functions with the same Mahler measure. We remark that one can apply these transformations to the family of polynomials considered in Theorem \ref{thm:main} as well, and obtain many more polynomials with the same Mahler measure. 

\bibliographystyle{amsplain}
\bibliography{Bibliography}
\end{document}